\date{September 10, 2013}
\title[Zero mean curvature surfaces]{%
  Zero mean curvature 
  surfaces in \\
  Lorentz-Minkowski $3$-space and \\
  2-dimensional fluid mechanics
}
\author[Fujimori et.\ al.]{
        S.~Fujimori,  
	Y.~W.~Kim,    
	S.-E.~Koh,    
        W.~Rossman,   
	H.~Shin,      
        M.~Umehara,   
        K.~Yamada and 
	S.-D.~Yang}   
\address[Fujimori]{%
   Department of Mathematics,
   Faculty of Science, Okayama University,
   Okayama 700-8530, Japan}
\email{fujimori@math.okayama-u.ac.jp}
\address[Kim]{%
   Department of Mathematics,
   Korea University, Seoul 136-701, Korea
}
\email{ywkim@korea.ac.kr}
\address[Koh]{%
   Department of Mathematics,
   Konkuk University, Seoul 143-701, Korea}
\email{sekoh@konkuk.ac.kr}
\address[Rossman]{%
   Department of Mathematics,
   Faculty of Science,
   Kobe University,
   Kobe 657-8501, Japan
}
\email{wayne@math.kobe-u.ac.jp}
\address[Shin]{%
   Department of Mathematics,
   Chung-Ang University, Seoul 156-756, Korea
}
\email{hshin@cau.ac.kr}
\address[Umehara]{%
   Department of Mathematical and Computing Sciences,
   Tokyo Institute of Technology,
   Tokyo 152-8552, Japan
}
\email{umehara@is.titech.ac.jp}
\address[Yamada]{%
   Department of Mathematics,
   Tokyo Institute of Technology,
   Tokyo 152-8551, Japan
}
\email{kotaro@math.titech.ac.jp}
\address[Yang]{%
   Department of Mathematics,
   Korea University, Seoul 136-701, Korea
}
\email{sdyang@korea.ac.kr}
\subjclass[2010]{
  Primary 53A10;   
  Secondary 53B30. 
}
\keywords{%
    maximal surface, 
    minimal surface, 
    type change, 
    zero mean curvature}%
\theoremstyle{plain}
 \newtheorem{theorem}{Theorem}[section]
 \newtheorem{proposition}[theorem]{Proposition}
 \newtheorem{fact}[theorem]{Fact}
 \newtheorem{lemma}[theorem]{Lemma}
 \newtheorem{corollary}[theorem]{Corollary}
 \newtheorem*{problem}{Problem}
\theoremstyle{definition}
 \newtheorem{definition}[theorem]{Definition}
\theoremstyle{remark}
 \newtheorem{remark}[theorem]{Remark}
 \newtheorem*{remark*}{Remark}
 \newtheorem*{acknowledgement}{Acknowledgement}
\numberwithin{equation}{section}
\renewcommand{\theenumi}{{\rm(\arabic{enumi})}}
\renewcommand{\labelenumi}{\theenumi}
\renewcommand{\Re}{\operatorname{Re}}
\renewcommand{\Im}{\operatorname{Im}}
\newcommand{\II}{I\!I}
\newcommand{\III}{I\!I\!I}
\newcommand{\vect}[1]{\boldsymbol{#1}}
\newcommand{\R}{\boldsymbol{R}}
\newcommand{\C}{\boldsymbol{C}}
\newcommand{\Z}{\boldsymbol{Z}}
\renewcommand{\phi}{\varphi}
\newcommand{\inner}[2]{\left\langle{#1},{#2}\right\rangle}
\newcommand{\Cat}{\mathcal{C}}
\newcommand{\Sch}{\mathcal{S}}
\newcommand{\pmt}[1]{{\begin{pmatrix} #1  \end{pmatrix}}}
\newcommand{\rot}{\operatorname{rot}}
\renewcommand{\div}{\operatorname{div}}
\newcommand{\grad}{\operatorname{grad}}
\begin{document}
\begin{abstract}
 Space-like maximal 
 surfaces  and time-like minimal surfaces 
 in Lorentz-Minkowski $3$-space  $\R^3_1$  
 are both characterized as zero mean 
 curvature surfaces.
 We are interested in the case where 
 the 
 zero mean 
 curvature
 surface changes type from
 space-like to time-like at a
 given non-degenerate null curve.
 We consider this phenomenon and
 its interesting connection
 to $2$-dimensional fluid mechanics
in this expository article.
\end{abstract}
\thanks{
  Kim was supported by NRF 2009-0086794, 
  Koh by NRF 2009-0086794 and NRF 2011-0001565, 
  and Yang by NRF 2012R1A1A2042530.
  Fujimori was partially supported by the Grant-in-Aid for 
  Young Scientists (B) No.\ 21740052,
  Rossman was supported by 
  Grant-in-Aid for Scientific Research (B) No.\ 20340012, 
  Umehara by (A) No.\ 22244006 and 
  Yamada by (B) No.\ 21340016
  from the Japan Society for the Promotion of Science.
}
\maketitle

\section{Introduction}
We denote by $\R^3_1:=\{(t,x,y)\,;\,t\in\R\}$ the Lorentz-Minkowski
$3$-space with the metric $\inner{~}{~}$ of signature $(-,+,+)$.
 Space-like maximal  surfaces  and time-like minimal surfaces 
 in Lorentz-Minkowski $3$-space $\R^3_1$  
 are both characterized as zero mean 
 curvature surfaces.
This is an expository article about 
type changes of zero mean curvature surfaces
in $\R^3_1$. 
Klyachin \cite{Kl} showed,
under a sufficiently weak regularity assumption, 
that a zero mean curvature surface
in $\R^3_1$ changes its causal type only 
on the following two subsets:
\begin{itemize} 
\item null curves
      (i.e., regular curves whose velocity vector fields are light-like)
      which are non-degenerate (cf.\ Definition~\ref{def:n-deg}), or
 \item light-like lines, which are degenerate everywhere.
\end{itemize} 
Recently, actual occurrence
of the second case was shown
in the authors' previous work \cite{Null}.
So we now pay attention to the former possibilities:
Given a non-degenerate null curve $\gamma$ in $\R^3_1$, 
there exists a zero mean curvature surface which changes 
its causal type across this curve from a space-like 
maximal surface to a time-like minimal surface 
(cf.\ \cite{G,G1,G2}, \cite{Kl}, \cite{KY2} and \cite{KKSY}).
This construction can be accomplished using 
the Bj\"orling formula for the Weierstrass-type
representation formula of maximal surfaces.
By unifying the results of Gu \cite{G}, Klyachin \cite{Kl}, and
\cite{KKSY},
we explain the mechanism for how zero mean curvature surfaces 
change type across non-degenerate null curves,
and give
\lq the fundamental theorem
of type change for zero mean curvature surfaces\rq\
(cf.\ Theorem \ref{thm:main2})
in the second section of this paper.
Locally, such a surface is the
graph of a function $t=f(x,y)$ satisfying 
\begin{equation}\label{zm}
    (1-f_y^2)f_{xx} + 2 f_x f_yf_{xy}+(1- f_x^2)f_{yy}=0.
\end{equation}
We call this and its graph the {\it zero mean curvature equation\/} 
and a {\it zero mean curvature surface} or 
{\it zero mean curvature graph}, respectively.

\begin{figure}[t]%
 \begin{center}
  \begin{tabular}{c@{\hspace{2em}}c@{\hspace{2em}}c@{\hspace{2em}}c}
       \includegraphics[width=4.5cm]{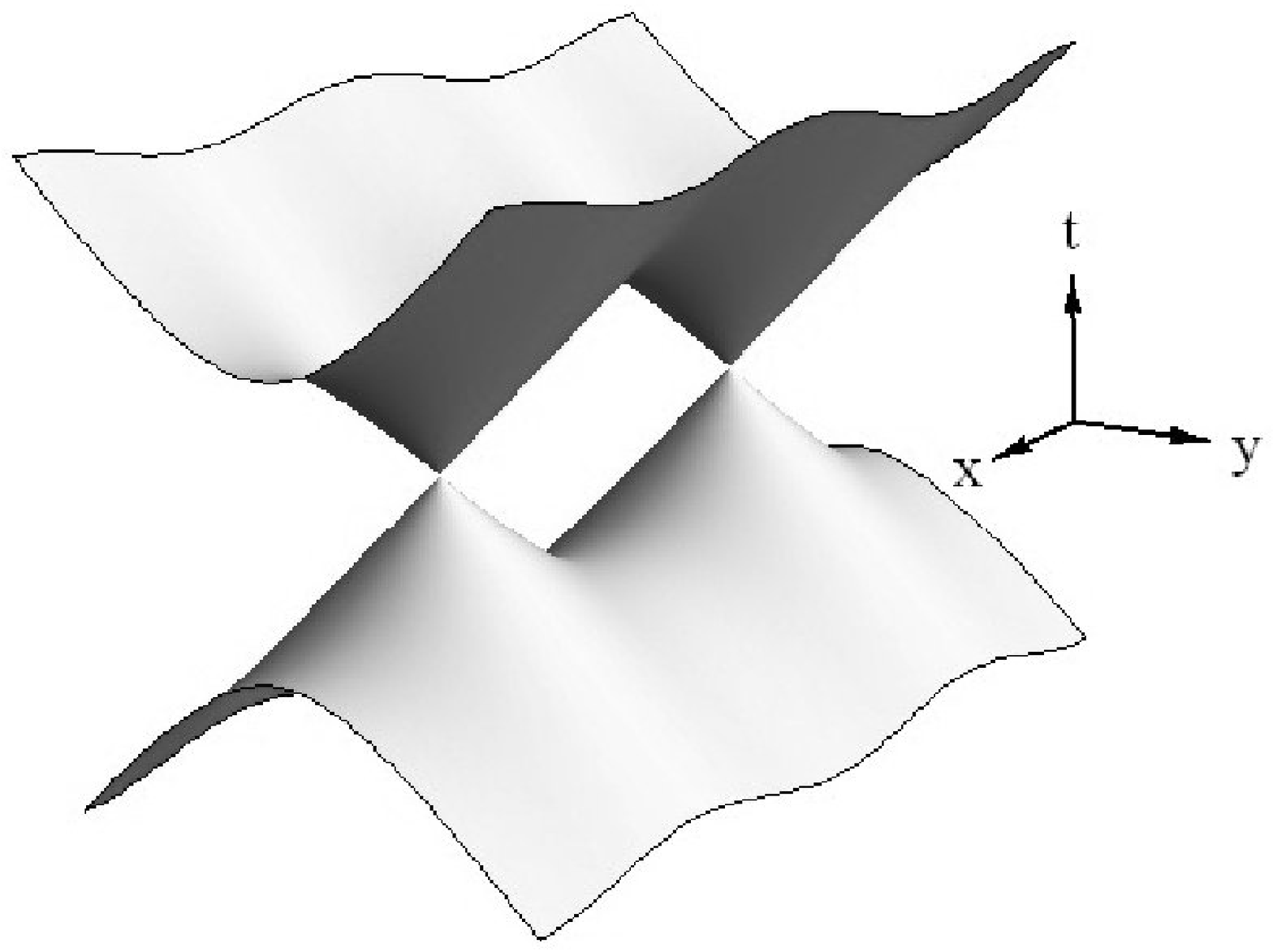} & 
       \includegraphics[width=4.5cm]{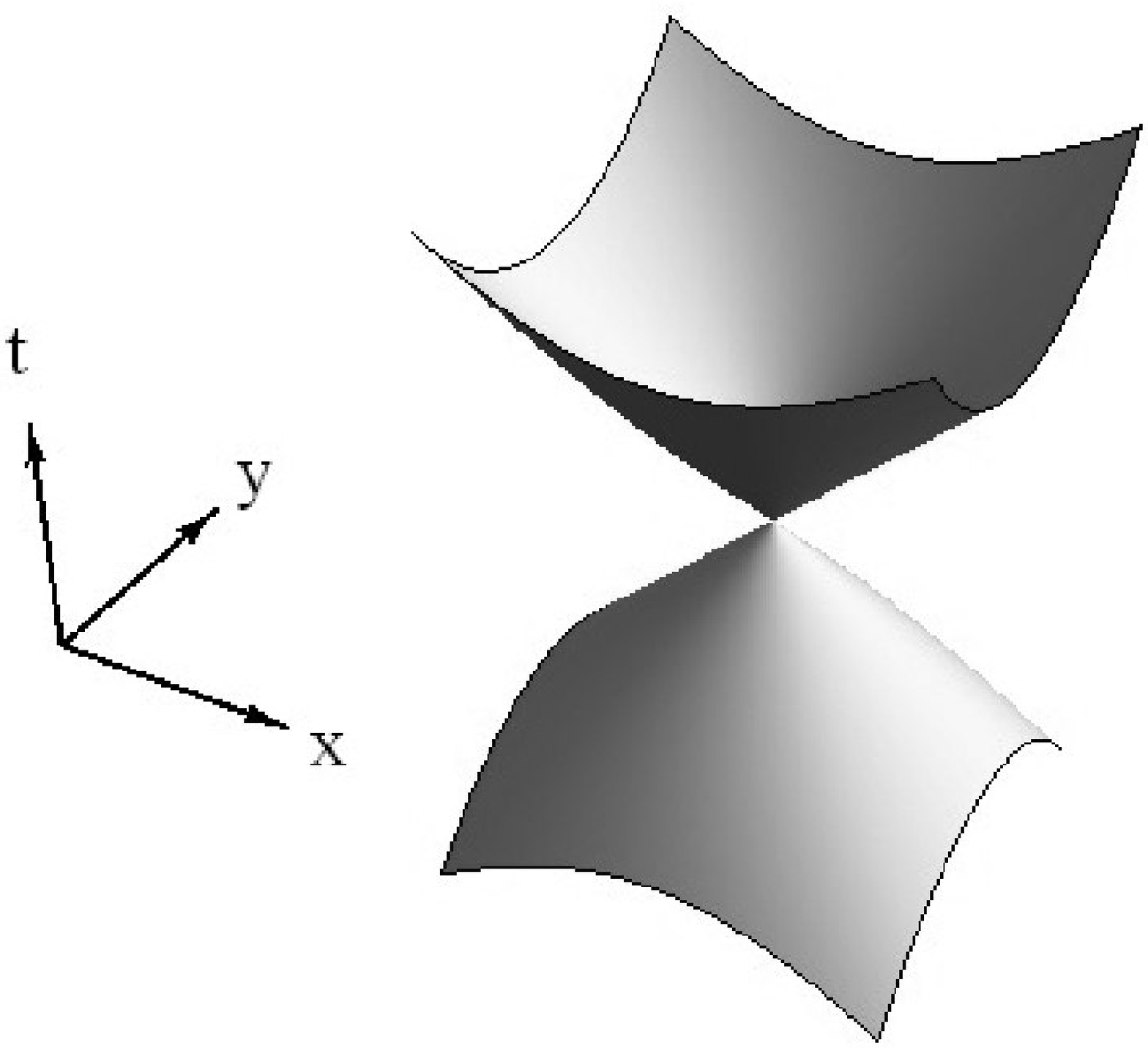} 
  \end{tabular}
 \end{center}
 \caption{%
  Hyperbolic catenoids $\Cat_+$, $\Cat_-$.
 }%
\label{fig1a}
\end{figure}

As pointed out in \cite{CR},
the space-like hyperbolic catenoid
\begin{equation}\label{eq:S}
  \Cat_+=\{(t,x,y)\in \R^3_1\,;\, \sin^2 x+y^2-t^2=0\}
\end{equation}
and the time-like hyperbolic catenoid
\begin{equation}\label{eq:T}
  \Cat_-=\{(t,x,y)\in \R^3_1\,;\, \sinh^2 x+y^2-t^2=0\}
\end{equation}
are both typical examples of zero mean curvature
surfaces containing singular light-like lines as 
subsets (cf.\ Figure \ref{fig1a}).
The space-like hyperbolic catenoid $\Cat_+$ is singly periodic. 

Also, both the space-like Scherk surface (cf.\ \cite{CR})
\begin{equation}\label{eq:Sp}
  \Sch_+=\{(t,x,y)\in \R^3_1\,;\, \cos t =\cos x \cos y\}
\end{equation}
and the time-like Scherk surface of the first kind 
(cf.\ \cite{CR})
\begin{equation}\label{eq:St}
  \Sch_-=\{(t,x,y)\in \R^3_1\,;\, \cosh t=\cosh x \cosh y\}
 \end{equation}
contain singular light-like lines as 
subsets (cf.\ Figure \ref{fig1b}).
As seen in the left-hand side of
Figure~\ref{fig1b}, 
$\Sch_+$ is triply periodic.

As an application of the results in Section~\ref{sec:fundamental},
we show in Section~\ref{sec:conjugate} that
$\Cat_+$ and $\Cat_-$
induce a common zero mean curvature graph
 (cf.\ Figure \ref{fig1c}, left)
\begin{equation}\label{eq:K1}
  \Cat_0=\{(t,x,y)\in \R^3_1\,;\, t=y \tanh x\}
\end{equation}
via their conjugate surfaces.
The graph $\Cat_0$ changes type at two non-degenerate null curves.
Similarly, we also show that the Scherk-type surfaces $\Sch_+$
and $\Sch_-$ induce a common zero mean curvature graph
via their conjugate surfaces  (cf.\ Figure \ref{fig1c}, right)
\begin{equation}\label{eq:K2}
  \Sch_0=\{(t,x,y)\in \R^3_1\,;\, 
  e^t \cosh x=\cosh y 
  \},
\end{equation}
which changes type at four non-degenerate
null curves.
These two phenomena were 
briefly commented upon
in \cite{CR}.
The entire zero-mean curvature graphs
$\Cat_0$ and $\Sch_0$ were discovered
by Osamu Kobayashi \cite{K}.
On the other hand, the above three examples
\eqref{eq:Sp}, \eqref{eq:St} and \eqref{eq:K2} are
particular cases of the general families presented in
Sergienko and Tkachev \cite[Theorem 2]{ST}. 
Moreover, several doubly periodic mixed type 
zero mean curvature graphs with 
isolated singularities are given in \cite{ST}.
Space-like maximal surfaces
frequently have singularities.
See the references
\cite{ER}, \cite{UY} and \cite{FSUY}
for general treatment of these singularities.

\begin{figure}[htb]%
 \begin{center}
  \begin{tabular}{c@{\hspace{2em}}c}
       \includegraphics[height=3.8cm]{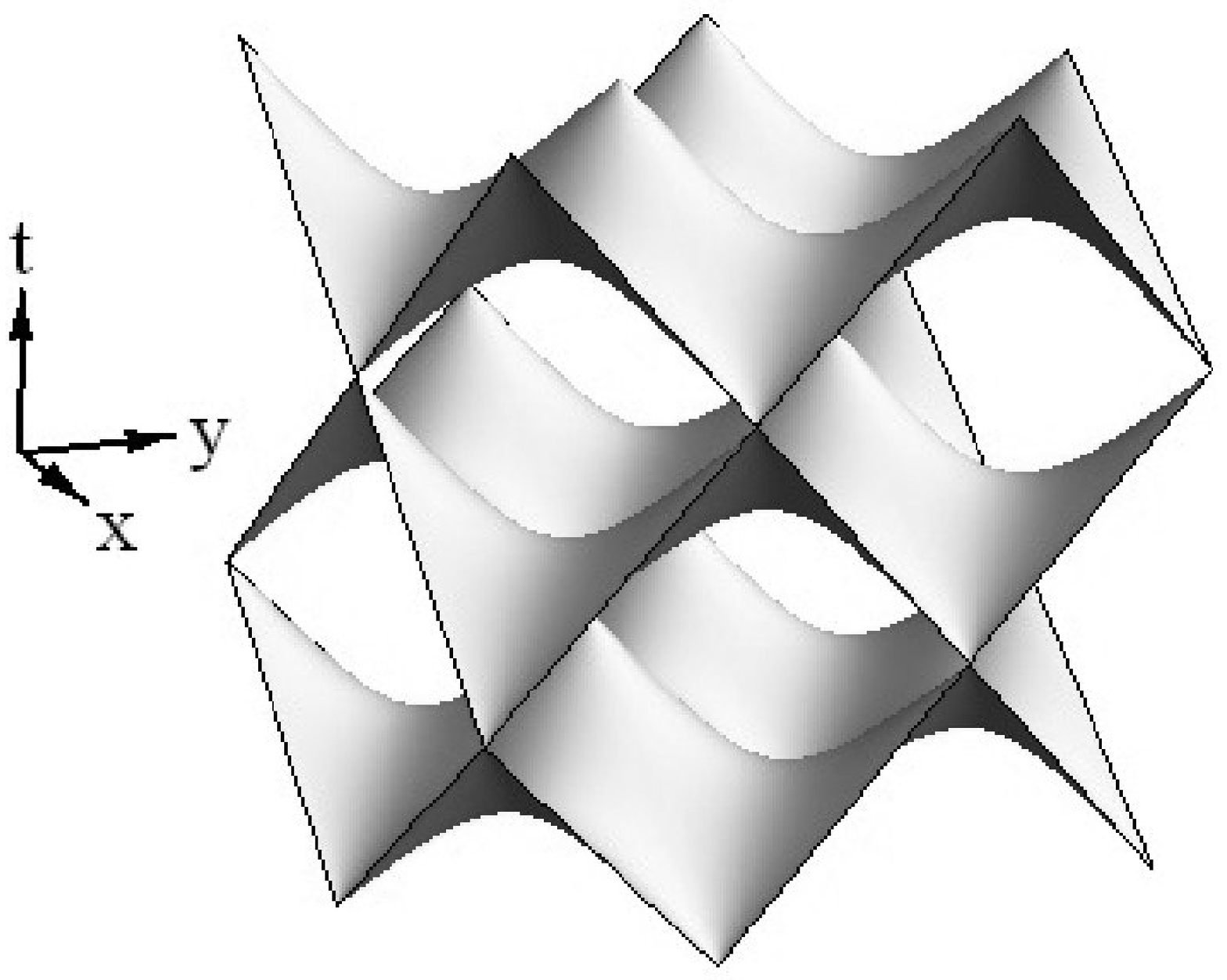} & 
       \includegraphics[height=4.5cm]{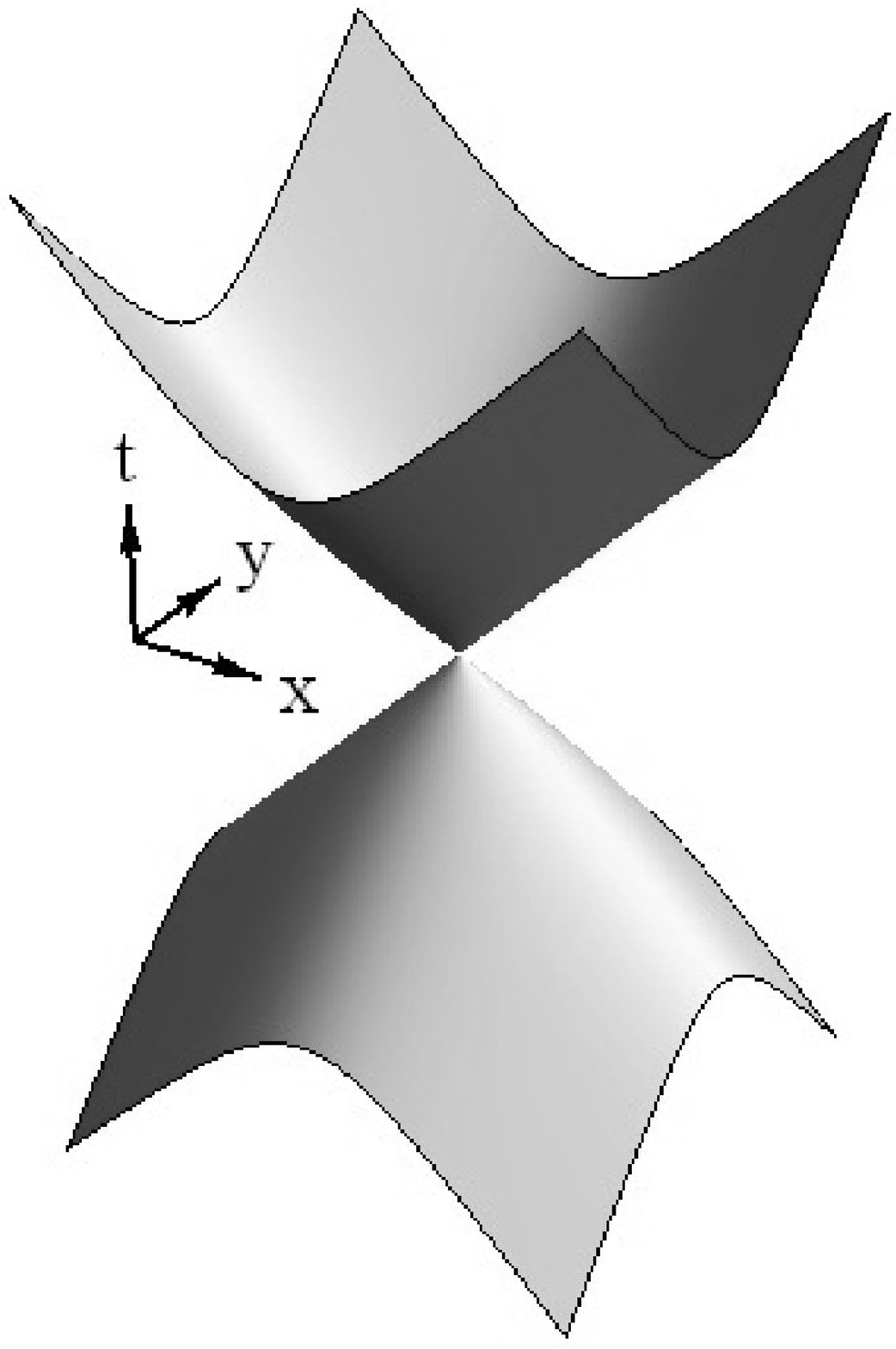}  
  \end{tabular}
 \end{center}
\caption{%
    Scherk-type surfaces $\Sch_+$ and $\Sch_-$.
}%
\label{fig1b}
\end{figure}

In Section \ref{sec:fluid}, we remark on an interesting connection
between zero mean curvature surfaces in $\R^3_1$ and 
irrotational two-dimensional barotropic steady flow,
where
the fluid is called {\em barotropic}
if
the pressure $p$ is a function depending 
only on the density $\rho$.
In fact, the stream function $\psi(x,y)$ satisfies
(cf.\ \cite{R}, 
see also Proposition~\ref{prop:eq} in Section~\ref{sec:fluid}%
)
\begin{equation}\label{eq:st}
    (\rho^2c^2-\psi_y^2)\psi_{xx} + 2 \psi_x \psi_y\psi_{xy}+
    (\rho^2c^2- \psi_x^2)\psi_{yy}=0,
\end{equation}
where $c$ is the local speed of sound
given by $c^2=dp/d\rho$
 (cf.~\eqref{eq:sound-speed}).
We choose the units so that $\rho=1$ and 
$c=1$ when $\psi_x=\psi_y=0$.
Then the product $\rho c$ is equal to $1$ if 
\begin{equation}\label{eq:virtual-pressure}
  p =p_0-\frac{1}{\rho},
\end{equation}
for a constant $p_0$,
which implies that the graphs of zero mean curvature surfaces 
can be interpreted as stream functions of a {\it virtual gas} 
with \eqref{eq:virtual-pressure}.
(In fact, $p$ is approximately proportional 
to $\rho^{1.4}$ for air.)
As an application of the singular  Bj\"orling formula
for zero mean curvature surfaces
(cf.\ Theorem~\ref{thm:main2}), 
we can construct a family
of stream functions which change from being 
subsonic to supersonic at a given locally convex curve in the $xy$-plane.
The velocity vector fields of these gas flows 
diverge at the convex
curve, although the streaming functions are real analytic.

\begin{figure}[t]%
 \begin{center}
  \begin{tabular}{c@{\hspace{2em}}c@{\hspace{2em}}c@{\hspace{2em}}c}
       \includegraphics[width=4.5cm]{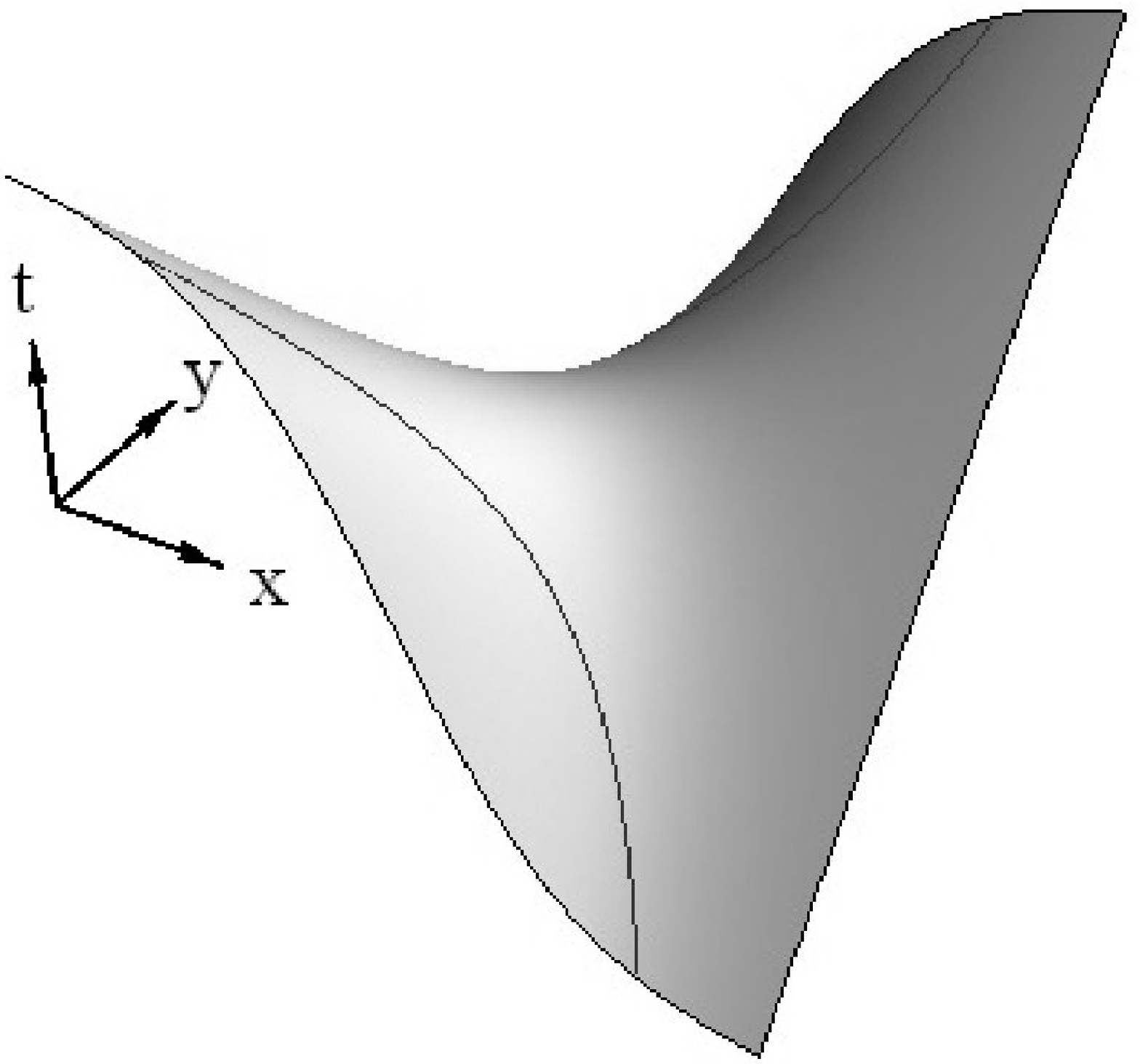} & 
       \includegraphics[width=4.8cm]{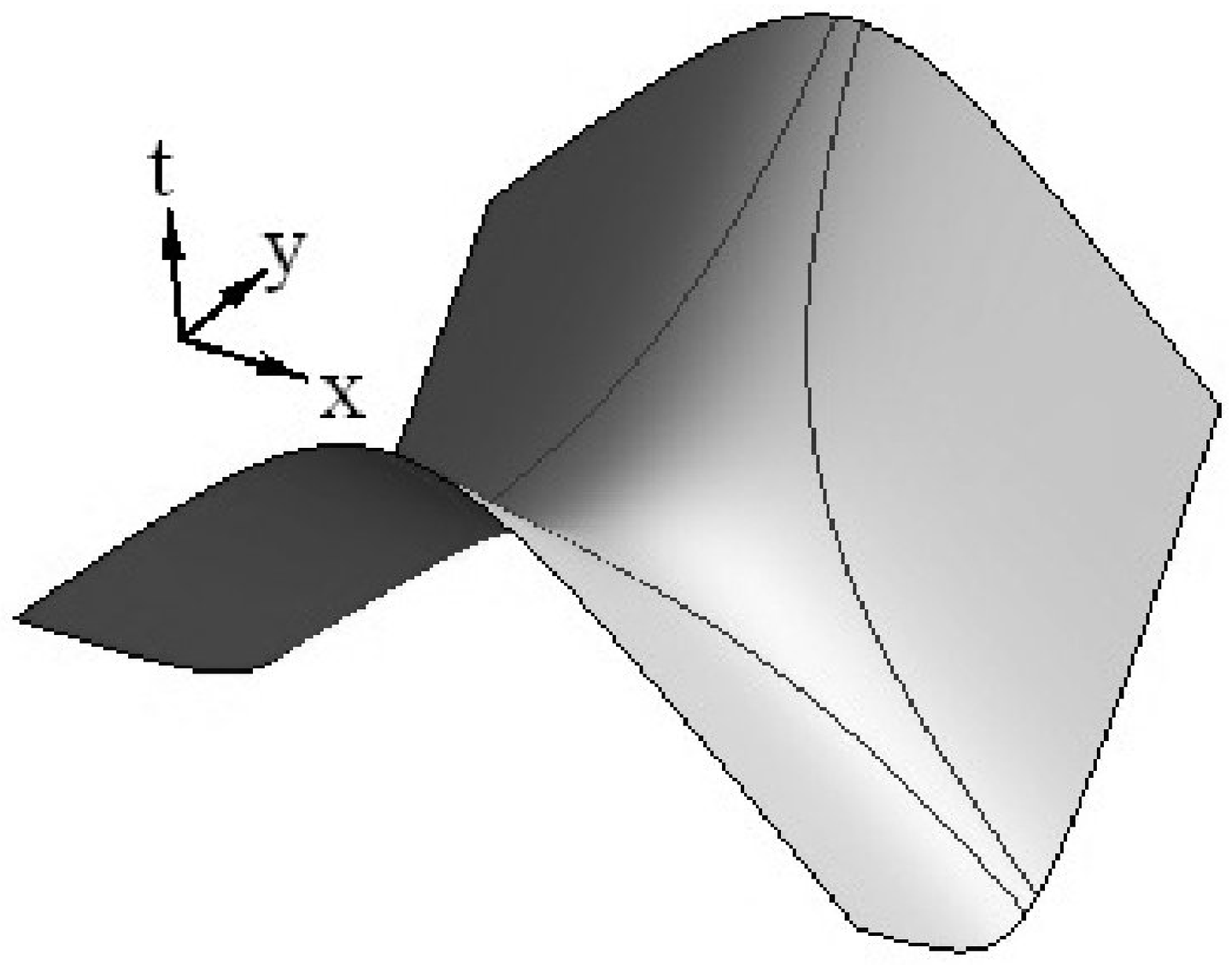} 
  \end{tabular}
 \end{center}
\caption{%
 Zero mean curvature graphs  $\Cat_0$ and $\Sch_0$
 (the curves where the surfaces change type are also indicated).
}%
\label{fig1c}
\end{figure}

\section{%
 Type change of zero mean curvature surfaces
}
\label{sec:fundamental}

In this section, we discuss type change 
for zero mean curvature surfaces,
by unifying the results of Gu \cite{G,G1, G2}, Klyachin \cite{Kl} and 
four of the authors here \cite{KKSY}.

A regular curve $\gamma:(a,b)\to \R^3_1$
is called {\it null\/} or {\it isotropic\/}
if $\gamma'(t):=d\gamma(t)/dt$ is a light-like vector for all $t\in
(a,b)$.
\begin{definition}\label{def:n-deg}
 A null curve  $\gamma:(a,b)\to \R^3_1$
 is called {\it degenerate} or {\it non-degenerate\/} at $t=c$
 if $\gamma''(c)$ is or is not proportional
 to the velocity vector $\gamma'(c)$, respectively.
 If $\gamma$ is non-degenerate at each $t\in (a,b)$,
 it is called a {\it non-degenerate\/} null curve.
\end{definition}
We now give a characterization of
zero mean curvature surfaces that change type across
a real analytic non-degenerate null curve.
Given an arbitrary real analytic 
null curve $\gamma : (a,b) \rightarrow \R^3_1$, 
we denote the unique analytic extension of 
it by $\gamma$ again throughout this article, 
by a slight abuse of notation.
We consider the two surfaces
\begin{align*}
   \Phi(u,v)&:=\frac{\gamma(u+iv)+\gamma(u-iv)}2,
   \intertext{and}
   \Psi(u,v)&:=\frac{\gamma(u+v)+\gamma(u-v)}2,
\end{align*}
which are defined for $v$ sufficiently close to zero. 
We recall the following assertion:
\begin{proposition}[\cite{G,G1,G2}, \cite{Kl} and \cite{KKSY}]
\label{thm:KKSY}
 Given a real analytic non-degenerate null curve 
 $\gamma:(a,b)\to \R^3_1$, the union of the images of $\Phi$ and 
 $\Psi$ given as above
 are subsets of a real analytic
 immersion, and the intersection is $\gamma$.
 Moreover, $\Phi$ gives a space-like maximal surface
 and $\Psi$ gives a time-like minimal surface if 
 $v$ is sufficiently close to zero.
 Furthermore,  this analytic extension of
 the curve $\gamma$ as a zero mean curvature surface
 does not depend upon the choice of the real analytic
 parametrization of the curve $\gamma$.
\end{proposition}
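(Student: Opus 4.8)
The plan is to exploit the fact that both $\Phi$ and $\Psi$ are built from the \emph{same} holomorphic curve $\gamma(z)$ — the complexification of the real analytic null curve — so that they are two real slices of one complex-analytic object. First I would work in a complex neighborhood of a point $u_0\in(a,b)$ and set $z=u+iv$, $w=u+v$; then $\Phi(u,v)=\tfrac12\bigl(\gamma(z)+\overline{\gamma(\bar z)}\bigr)$ — wait, more precisely $\Phi(u,v)=\tfrac12(\gamma(u+iv)+\gamma(u-iv))=\Re\gamma(u+iv)$ is the real part of a holomorphic $\R^3_1$-valued function, while $\Psi(u,v)=\tfrac12(\gamma(u+v)+\gamma(u-v))$ is the corresponding real-parameter average. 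The key analytic device is the Bj\"orling-type representation: I would verify that $\Phi$ is the maximal surface obtained by the Bj\"orling formula with core curve $\gamma$ (restricted to real arguments) and that $\Psi$ is the time-like minimal surface obtained by the split-complex (para-complex) analogue of the Bj\"orling formula with the same core curve. Both formulas take $\gamma$ and its derivative data as input, so they manifestly agree along $v=0$, giving that the intersection of the two images is exactly $\gamma((a,b))$.

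Next I would establish immersiveness and the zero-mean-curvature / causal-type assertions. For $\Phi$: compute the first fundamental form using $\Phi_u=\Re\gamma'$, $\Phi_v=-\Im\gamma'=\Re(i\gamma')$; along $v=0$ one gets $\langle\Phi_u,\Phi_u\rangle=-\langle\Phi_v,\Phi_v\rangle$ and $\langle\Phi_u,\Phi_v\rangle=0$ from $\langle\gamma',\gamma'\rangle\equiv 0$ (the null condition, analytically continued), and non-degeneracy of $\gamma$ guarantees $\langle\Phi_u,\Phi_u\rangle>0$ near $v=0$ after possibly adjusting by an isometry — here is where $\gamma''(c)$ not parallel to $\gamma'(c)$ enters, since it forces $\langle\gamma',\overline{\gamma'}\rangle$-type quantities to be nonzero. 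Thus $\Phi$ is space-like and conformal, hence (being harmonic, as $\Re$ of a holomorphic map into $\R^3_1$) a space-like maximal immersion for small $|v|$. For $\Psi$ the same computation with the split-complex structure gives $\langle\Psi_u,\Psi_u\rangle\langle\Psi_v,\Psi_v\rangle<0$, i.e. a Lorentzian induced metric, so $\Psi$ is a time-like minimal immersion. This is essentially the content imported from \cite{G,G1,G2}, \cite{Kl}, \cite{KKSY}, which I am permitted to cite.

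The heart of the matter — and the step I expect to be the main obstacle — is showing that the two pieces fit together into a \emph{single} real analytic immersion, i.e. that there is one real analytic map $F$ on a neighborhood of $(a,b)\times\{0\}$ in $\R^2$ whose restriction to one region recovers $\Phi$ (up to reparametrization) and to the complementary region recovers $\Psi$, with $\gamma$ as the common boundary. The natural candidate is to use the analytic extension $\gamma(z)$ and define $F$ on the $z$-plane directly, then observe that the ``maximal side'' corresponds to $z$ with $\Im z\neq 0$ and the ``minimal side'' to a rotated real slice; the subtlety is matching parametrizations across $\gamma$ so that all derivatives agree to infinite order there. I would handle this by introducing the conformal (resp. para-conformal) coordinate intrinsic to each side and checking, via the shared Bj\"orling data on $\gamma$, that the transition is real analytic — equivalently, that the single power series in two variables built from the Taylor coefficients of $\gamma$ at $u_0$ converges and represents both restrictions. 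Finally, reparametrization-independence follows because any two real analytic parametrizations of the curve $\gamma$ differ by a real analytic reparametrization of the parameter interval, which extends holomorphically and therefore carries $\gamma(z)$ to $\tilde\gamma(\tilde z)$; both $\Phi$ and $\Psi$, being defined through $\gamma$ only via its image and the induced conformal structure, transform accordingly, so the resulting zero mean curvature surface is intrinsically attached to the curve. I would close by noting that uniqueness of analytic continuation makes the whole construction canonical.
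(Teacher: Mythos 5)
Your overall strategy is the one the paper itself uses: complexify $\gamma$, view $\Phi$ and $\Psi$ as two real slices of the same holomorphic data, and glue them by observing that both are given by a single power series in the variables $(u,v^2)$ built from the Taylor coefficients of $\gamma$ --- this is precisely the paper's auxiliary map $H(u,v)=\sum_n \gamma^{(2n)}(u)v^{n}$ with $H(u,-v^2)=\Phi(u,v)$ and $H(u,v^2)=\Psi(u,v)$. Your reparametrization argument is likewise the paper's in outline: the change of parameter induces a change of variables $(u,v)\mapsto (A,B)$ on the domain which is a local diffeomorphism along $v=0$ (Jacobian $f'(u)^2\ne 0$), so the image is unchanged.

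There is, however, one genuine gap: you never prove that the glued object is an \emph{immersion} along $\gamma$, which is the first assertion of the proposition. Note that $\Phi$ and $\Psi$ themselves fail to be immersions at $v=0$ (one computes $\Phi_v(u,0)=\Psi_v(u,0)=0$; the curve is a fold of each parametrization), so showing that each piece is a space-like, resp.\ time-like, conformal immersion for $v\ne 0$ says nothing about regularity of the common surface \emph{across} the curve. The whole point of passing to the coordinate $w=\pm v^2$ is that $H_u(u,0)=\gamma'(u)$ and $H_v(u,0)=\gamma''(u)$, and these are linearly independent exactly because $\gamma$ is a non-degenerate null curve; this is where the non-degeneracy hypothesis does its essential work. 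In your sketch, non-degeneracy is invoked only in the causal-type discussion (where its actual role is to confine the singular set of the maxface to $v=0$), and the key step ``the transition is real analytic'' is asserted rather than derived. To close the argument you should write down $H$ explicitly, verify $H(u,\mp v^2)=\Phi,\Psi$, and compute $H_u(u,0)$ and $H_v(u,0)$; this also cleanly yields that the two images meet exactly in $\gamma$, since they correspond to the disjoint half-planes $w<0$ and $w>0$ of the immersed surface $H$.
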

\begin{proof}
 We give here a proof for the sake of the readers' convenience.
 We have that  
 \[
   \Phi(u,v)=\sum_{n=0}^\infty (-1)^n \gamma^{(2n)}(u)v^{2n},\qquad
   \Psi(u,v)=\sum_{n=0}^\infty \gamma^{(2n)}(u)v^{2n}
 \]
 near $v=0$, where
 $\gamma^{(j)}={d^j \gamma}/{d t^j}$.
 In particular, if we set
 $H(u,v):=\sum_{n=0}^\infty \gamma^{(2n)}(u)v^{n}$,
 then it gives a germ of a real analytic function satisfying
 \[
    H(u,-v^2)=\Phi(u,v),\qquad H(u,v^2)=\Psi(u,v),
 \]
 which prove that 
 the images of $\Phi$ and $\Psi$ lie on a 
 common real analytic surface.
 Since $\gamma$ is non-degenerate, the two vectors
 \[
    H_u(u,0)=\gamma'(u),\qquad H_v(u,0)=\gamma''(u)
 \]
 are linearly independent, and $H$ gives 
 an immersion which contains $\gamma$.

 Moreover, it can be easily checked that
 $\Phi$ gives a space-like maximal 
 surface (cf.\ Lemma \ref{lem:fold})
 and $\Psi$ gives a  time-like minimal  surface.

 We now show the last assertion:
 Since the surface is real analytic, 
 it is sufficient to show that
 given an arbitrary real analytic diffeomorphism $f$ from $(a,b)$ 
 onto its image in $\R$,
 \[
    \Psi(u,v)=\frac{\gamma(u+iv)+\gamma(u-iv)}2 \qquad
    \text{and}\qquad
    \tilde \Psi(u,v)=\frac{\tilde\gamma(u+iv)+\tilde\gamma(u-iv)}2
 \]
 induce the same surface as their graphs,
 where $\tilde \gamma(t):=\gamma(f(t))$.
 We define $A$, $B$ by
 \[
     A=(f(u+v)+f(u-v))/2,\qquad B=(f(u+v)-f(u-v))/2.
 \]
 Thus it is sufficient to show that
 the map
 \[
    (u,v)\mapsto (A,B)
 \]
 is an immersion at $(u,0)$. 
 In fact, the Jacobian of the map is given by
 \[
    J=\det\pmt{f'(u) & 0 \\ 0 & f'(u)}\ne 0.
 \]
\end{proof}

\begin{definition}
\label{def:nondegtc}
 Let $\Omega^2$ be a domain in $\R^2$
 and $f:\Omega^2\to \R$ a $C^\infty$-function satisfying \eqref{zm}.
 We set
 \[
    B:=1-f_x^2-f_y^2.
 \]
 A point $p$ on $\Omega^2$ is called a
 {\it non-degenerate point of  type change\/}%
\footnote{%
    In Gu \cite{G1},
    \lq dual regularity\rq\ for points of type change
    is equivalent to our notion. 
    Klyachin \cite{Kl}  did not define this particular notion, 
    but use it in an essential way.
} %
 with respect to $f$ if 
 \[
     B(p)=0,\qquad \nabla B(p)\ne0,
 \]
 where $\nabla B:=(B_x,B_y)$ is the
 gradient vector of the function $B$.
\end{definition}

Since $\nabla B$ does not vanish at $p$, 
the function $f$ actually changes type at the non-degenerate 
point $p$.
\begin{proposition}[\cite{G1,G2}]
\label{prop:equiv}
 Under the assumption that  $B(p)$ vanishes,
 the following two assertions are equivalent.
 \begin{enumerate}
  \item\label{item:equiv:1} 
       $p$ is a non-degenerate point of  type change.
  \item\label{item:equiv:2}
        $p$ is a dually regular point in the sense of 
	\cite{G1}, that is, $f_{xx}f_{yy}-(f_{xy})^2$ 
	does not vanish at $p$.
 \end{enumerate}
\end{proposition}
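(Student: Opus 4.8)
The plan is to reduce the equivalence to an elementary statement about symmetric real $2\times2$ matrices, after recording two identities. Write $\mathcal H:=\pmt{f_{xx} & f_{xy} \\ f_{xy} & f_{yy}}$ for the Hessian matrix of $f$ and $w:=\pmt{f_x \\ f_y}$ for the gradient of $f$ written as a column vector. First I would compute, directly from $B=1-f_x^2-f_y^2$, that
\[
 \pmt{B_x \\ B_y}=-2\pmt{f_xf_{xx}+f_yf_{xy} \\ f_xf_{xy}+f_yf_{yy}}=-2\,\mathcal H\,w .
\]
Since $B(p)=0$ means $f_x^2+f_y^2=1$ at $p$, the vector $w$ is nonzero (indeed a unit vector) there, so assertion \ref{item:equiv:1} says precisely that $\mathcal H w\neq0$ at $p$, while assertion \ref{item:equiv:2} says $\det\mathcal H\neq0$ at $p$ (recall $\det\mathcal H=f_{xx}f_{yy}-f_{xy}^2$).

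Second, I would substitute $1-f_y^2=B+f_x^2$ and $1-f_x^2=B+f_y^2$ into \eqref{zm}, which turns the zero mean curvature equation into
\[
 B\,(f_{xx}+f_{yy})+\bigl(f_x^2f_{xx}+2f_xf_yf_{xy}+f_y^2f_{yy}\bigr)=0 .
\]
The bracketed term is exactly the quadratic form ${}^t\!w\,\mathcal H\,w$; evaluating at $p$ and using $B(p)=0$ gives ${}^t\!w\,\mathcal H\,w=0$ at $p$.

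The only step requiring an argument rather than a computation is the following linear-algebra lemma: \emph{if $\mathcal H$ is a symmetric real $2\times2$ matrix and $w\neq0$ satisfies ${}^t\!w\,\mathcal H\,w=0$, then $\mathcal H w=0$ if and only if $\det\mathcal H=0$.} If $\det\mathcal H\neq0$, then $\mathcal H$ is invertible and $w\neq0$ forces $\mathcal H w\neq0$. If $\det\mathcal H=0$, then $\mathcal H$ is either the zero matrix (so $\mathcal H w=0$ trivially) or of rank one, in which case the spectral theorem gives $\mathcal H=\lambda\,v\,{}^t\!v$ for a unit column vector $v$ and a scalar $\lambda\neq0$; then $0={}^t\!w\,\mathcal H\,w=\lambda\,({}^t\!v\,w)^2$ forces ${}^t\!v\,w=0$, hence $\mathcal H w=\lambda\,v\,({}^t\!v\,w)=0$. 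Combining the three pieces gives $\nabla B(p)\neq0\iff\det\mathcal H(p)\neq0$, which is the asserted equivalence of \ref{item:equiv:1} and \ref{item:equiv:2}. I expect no serious obstacle here: the substance is the factorization $\nabla B=-2\,\mathcal H\,w$ together with the $B$-rewriting of \eqref{zm}, and the only genuinely non-computational point is the rank-one case of the lemma, where the hypothesis ${}^t\!w\,\mathcal H\,w=0$ is used to force $\mathcal H w=0$.
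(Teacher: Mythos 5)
Your proof is correct and follows essentially the same route as the paper's: both rest on the identity $\nabla B=-2\mathcal{H}w$ (with $w=\nabla f$ nonzero since $B(p)=0$) and on the fact that the zero mean curvature equation at a point where $B$ vanishes forces ${}^t w\,\mathcal{H}\,w=0$. The only difference is cosmetic: where the paper rotates coordinates so that $f_{xy}(p)=0$ before checking that a singular Hessian annihilates $w$, you carry out the same linear-algebra step invariantly via the spectral decomposition of a rank-one symmetric matrix.
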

\begin{proof}
 Note that $ (\nabla B)^T = -2 H (\nabla f)^T$,
 where ${}^T$ is the transpose and
 $H:=\pmt{f_{xx} & f_{xy} \\ f_{xy} & f_{yy}}$.
 Note also that $B(p)=0$ implies that 
 $\nabla f(p) \not= \vec{0}$.

 Now suppose that \ref{item:equiv:2} holds. 
 Then $\det{H}(p) \neq 0$, which with 
 $\nabla f(p) \not= \vec{0}$ implies that 
 $H(p)(\nabla f(p))^T \neq \vec{0}$, that is,
  \ref{item:equiv:1}
 holds.

 Suppose on the contrary that \ref{item:equiv:2} does not hold. 
 By a suitable linear coordinate change of $(x,y)$, 
 we may assume without loss of generality that $f_{xy}(p)=0$. 
 Then either $f_{xx}(p)=0$ or $f_{yy}(p)=0$.
 Also, the zero mean curvature equation 
 \[
    0=(1-f_y^2)f_{xx}+2f_xf_yf_{xy}+(1-f_x^2)f_{yy}
 \]
 with $B(p)=0$ and $f_{xy}(p)=0$ implies that 
 \[
    f_x(p)^2 f_{xx}(p)+f_y(p)^2f_{yy}(p)=0.
 \]
 This with $f_{xx}(p)=0$ or $f_{yy}(p)=0$ implies that 
 \[
    H(p)(\nabla f(p))^T=
      \pmt{f_x(p) f_{xx}(p) \\ f_y(p) f_{yy}(p)} = 
      \pmt{0 \\ 0},
 \]
 so \ref{item:equiv:1} does not hold. 
\end{proof}

Moreover, the following assertion holds:
\begin{proposition}[\cite{G1,G2}, \cite{Kl}]
 \label{prop:Kl1}
 Let $\gamma$ be a real analytic non-degenerate null
 curve, and let $f_\gamma$ be the real analytic function
 induced by $\gamma$ as in Proposition \ref{thm:KKSY}, which satisfies \eqref{zm}.
 Then the image of $\gamma$
 consists of non-degenerate points of  type change 
 with respect to $f_\gamma$.
\end{proposition}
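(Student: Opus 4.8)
The plan is to realize the zero mean curvature surface near $\gamma$ as an honest graph $t=f_\gamma(x,y)$, to recognize $B=1-f_x^2-f_y^2$ as the determinant of the first fundamental form of that graph, and then to compute $B$ and $\nabla B$ along $\gamma$ directly from the first few derivatives of $\gamma$. Throughout I would work with the real analytic immersion $H(u,v)$ constructed in the proof of Proposition~\ref{thm:KKSY}; from its power series $H(u,v)=\gamma(u)+\gamma''(u)v+\gamma^{(4)}(u)v^{2}+\cdots$ one reads off $H_u(u,0)=\gamma'(u)$, $H_v(u,0)=\gamma''(u)$ and $H_{uv}(u,0)=\gamma'''(u)$.

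First I would check that the surface is a graph over the $xy$-plane near $\gamma$. Writing $\pi:\R^3_1\to\R^2$ for $\pi(t,x,y)=(x,y)$, this amounts to showing that $(u,v)\mapsto\pi(H(u,v))$ is a local diffeomorphism near $v=0$, i.e.\ that $\pi(\gamma'(u))$ and $\pi(\gamma''(u))$ are linearly independent. Decompose $\gamma'=(a,\vect{p})$ and $\gamma''=(b,\vect{q})$ with $a,b\in\R$ and $\vect{p},\vect{q}\in\R^2$; the condition $\inner{\gamma'}{\gamma'}=0$ reads $a^2=|\vect{p}|^2$, so $a\neq0$ since $\gamma$ is regular, and differentiating $\inner{\gamma'}{\gamma'}=0$ gives $\inner{\gamma'}{\gamma''}=0$, i.e.\ $ab=\vect{p}\cdot\vect{q}$. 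If $\vect{q}=\mu\vect{p}$ for some $\mu$, then $ab=\mu|\vect{p}|^2=\mu a^2$, hence $b=\mu a$ and $\gamma''=\mu\gamma'$, contradicting non-degeneracy. Thus $\pi(\gamma')$ and $\pi(\gamma'')$ are independent, the surface is a graph $t=f_\gamma(x,y)$ near $\gamma$, and $\sigma:(u,v)\mapsto\pi(H(u,v))$ is a local diffeomorphism there; let $J\neq0$ denote its Jacobian determinant on a neighborhood of $\gamma$.

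Because the first fundamental form of the graph $(x,y)\mapsto(f_\gamma,x,y)$ has matrix $\pmt{1-f_x^2 & -f_xf_y\\ -f_xf_y & 1-f_y^2}$, whose determinant is exactly $B$, pulling back along $\sigma$ gives $\det I_H=J^2\,(B\circ\sigma)$, where $I_H$ denotes the first fundamental form in the $(u,v)$-parametrization. Writing $E=\inner{H_u}{H_u}$, $F=\inner{H_u}{H_v}$, $G=\inner{H_v}{H_v}$, along $v=0$ we have $E=\inner{\gamma'}{\gamma'}=0$ and $F=\inner{\gamma'}{\gamma''}=0$, so $\det I_H=EG-F^2$ vanishes there; hence $B$ vanishes along $\gamma$. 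For the gradient, differentiate $EG-F^2$; using $E=F=0$ at $v=0$, only the term $E_v\,G$ survives, so $\partial_v(\det I_H)\big|_{v=0}=E_v(u,0)\,G(u,0)$. Now $G(u,0)=\inner{\gamma''}{\gamma''}>0$, since non-degeneracy makes $\gamma''$ a nonzero vector orthogonal to the null vector $\gamma'$ but not proportional to it, hence space-like; and $E_v(u,0)=2\inner{H_{uv}(u,0)}{H_u(u,0)}=2\inner{\gamma'''}{\gamma'}=-2\inner{\gamma''}{\gamma''}$, using $\inner{\gamma'}{\gamma'}\equiv0$ differentiated twice. Therefore $\partial_v(\det I_H)\big|_{v=0}=-2\inner{\gamma''}{\gamma''}^{2}\neq0$. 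Since $B\circ\sigma$ vanishes identically along $v=0$, its $(u,v)$-gradient there equals $\nabla_{(u,v)}(\det I_H)/J^2$, which is nonzero; and because $\sigma$ is a diffeomorphism this gives $\nabla B\neq0$ along $\gamma$. By Definition~\ref{def:nondegtc}, every point of the image of $\gamma$ is then a non-degenerate point of type change with respect to $f_\gamma$. (Alternatively one could verify the dual-regularity condition in Proposition~\ref{prop:equiv}, but going through $B$ seems the cleanest.)

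I expect the main obstacle to be not any single computation but the bookkeeping forced by the fact that the natural parametrization $\Phi(u,v)=\tfrac12(\gamma(u+iv)+\gamma(u-iv))$ is \emph{singular} along $\gamma$ --- it folds there, cf.\ Lemma~\ref{lem:fold} --- so one cannot simply read $B$ or $\nabla B$ off from it; one must first pass to the regular parametrization $H$ and then keep careful track of how the determinant of the first fundamental form transforms under the change of variables into graph coordinates. Non-degeneracy of $\gamma$ is used in exactly two essential places: to ensure a graph exists near $\gamma$, and to ensure $\gamma''$ is space-like --- the latter being precisely what forces $\nabla B\neq0$ rather than merely forcing $B$ to change sign across $\gamma$.
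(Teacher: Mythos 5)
Your proof is correct, but it runs along a genuinely different track from the paper's. The paper parametrizes $\gamma$ by its time coordinate, $\gamma(t)=(t,x(t),y(t))$ with $x'^2+y'^2=1$, deduces $x'=f_x$, $y'=f_y$ along $\gamma$ from the equality case of Cauchy--Schwarz applied to $x'f_x+y'f_y=1$ together with $f_x^2+f_y^2=1$, and then differentiates once more to obtain the pointwise identity $(x'',y'')=-\tfrac12\nabla B$; non-degeneracy ($\gamma''=(0,x'',y'')\ne 0$) then gives $\nabla B\ne 0$ immediately. You instead never touch the graph coordinates directly: you identify $B$ with the determinant of the first fundamental form, compute $E=F=0$ and $\partial_v(EG-F^2)=E_vG=-2\inner{\gamma''}{\gamma''}^2\ne 0$ along $v=0$ in the $(u,v)$-chart of $H$, and transport the conclusion through the local diffeomorphism $\sigma$. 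Each route has its advantages. Yours fills in two steps the paper leaves implicit --- that the surface is actually a graph near $\gamma$ (your projection argument for the independence of $\pi(\gamma')$ and $\pi(\gamma'')$) and that $B$ vanishes on $\gamma$ at all --- and it avoids the choice of the time coordinate as parameter. The paper's computation, on the other hand, produces the explicit identity $\nabla B=-2(x'',y'')$ along $\gamma$, which is not just a means to an end here: it is reused verbatim in Proposition~\ref{prop:Kl2} and again in Theorem~\ref{cor:st}, where the \emph{direction} of $\nabla B$ (the acceleration points into the supersonic region) matters, not merely its non-vanishing. Your sign computation $\partial_v\det I_H=-2\inner{\gamma''}{\gamma''}^2<0$ in fact encodes the same directional information (the $\Phi$-side $v\mapsto -v^2$ is the space-like side), though you do not extract it. Both arguments use non-degeneracy in exactly one essential place to force the transverse derivative to be nonzero, so the proofs are logically parallel even though the computations are not.
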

Note that the conclusion is stronger than that of 
Proposition~\ref{thm:KKSY}.
\begin{proof}
 Let $\gamma$ be a non-degenerate null curve.
 Without  loss of generality, we may 
 take the time-component $t$ as a parametrization of $\gamma$.
 Then we have the expression
 \[
    \gamma(t)=(t, x(t),y(t))\qquad (a<t<b)
 \]
 such that
 \begin{equation}\label{eq:a1}
  x'(t)^2+y'(t)^2=1.
 \end{equation}
 Since $\gamma$ is non-degenerate, it holds that
 \begin{equation}\label{eq:a2}
  0\ne \gamma''(t)=(0,x''(t),y''(t)).
 \end{equation}
 Differentiating the relation $t=f(x(t),y(t))$,
 we have that
 \begin{equation}\label{eq:a3}
  x'(t)f_x(x(t),y(t))+y'(t)f_y(x(t),y(t))=1.
 \end{equation}
 On the other hand, 
 the relation $B=0$ implies that
 \begin{equation}\label{eq:a4}
  f_x(x(t),y(t))^2+f_y(x(t),y(t))^2=1.
 \end{equation}
 Then by \eqref{eq:a1}, \eqref{eq:a3} and \eqref{eq:a4},
 it holds that
 \[
    x'(t)=f_x,\qquad y'(t)=f_y.
 \]
 Thus we have that
 \begin{align}
  \label{eq:id}
  (x'',y'')&=\frac{d}{dt}\biggl(f_x(x(t),y(t)),
                    f_y(x(t),y(t))\biggr)\\
           &=(x'f_{xx}+y'f_{xy},x'f_{xy}+y'f_{yy}) \nonumber
        \\ &=(f_xf_{xx}+f_yf_{xy},
           f_xf_{xy}+f_yf_{yy})=-\frac12\nabla B.
           \nonumber
 \end{align}
By \eqref{eq:a2}, we get the assertion.
\end{proof}
Conversely, we can prove the following.
\begin{proposition}[{\cite{G1,G2}, \cite[Lemma 2]{Kl}}]
 \label{prop:Kl2}
 Let $f:\Omega^2\to \R$ be a  $C^\infty$-function
 satisfying the zero mean curvature equation \eqref{zm},
 and let $p=(x_0,y_0)\in \Omega^2$ be a
 non-degenerate point of  type change.
 Then there exists a non-degenerate
 $C^\infty$-regular
 null curve in $\R^3_1$ with image passing 
 through $(f(x_0,y_0),x_0,y_0)$
 and contained in the graph of $f$.
\end{proposition}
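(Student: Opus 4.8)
The plan is to build the curve by flowing along the type-change locus $\{B=0\}$ using the gradient vector field of $f$ itself. First I would record a pointwise identity on $\Omega^2$. Since $B=1-f_x^2-f_y^2$, we have $B_x=-2(f_xf_{xx}+f_yf_{xy})$ and $B_y=-2(f_xf_{xy}+f_yf_{yy})$, so
\[
   \nabla f\cdot\nabla B
   =-2\left(f_x^2f_{xx}+2f_xf_yf_{xy}+f_y^2f_{yy}\right).
\]
On the other hand, using the zero mean curvature equation \eqref{zm} to substitute for the term $2f_xf_yf_{xy}$ gives
\[
   f_x^2f_{xx}+2f_xf_yf_{xy}+f_y^2f_{yy}
   =(f_x^2+f_y^2-1)(f_{xx}+f_{yy})=-B\,(f_{xx}+f_{yy}),
\]
hence $\nabla f\cdot\nabla B=2B(f_{xx}+f_{yy})$ throughout $\Omega^2$. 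In particular $\nabla f$ is tangent to $\{B=0\}$ at every point of that set, and $B(p)=0$ forces $|\nabla f(p)|^2=f_x(p)^2+f_y(p)^2=1$, so $\nabla f$ does not vanish at $p$.

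Next I would let $c(s)=(x(s),y(s))$ be the integral curve of the $C^\infty$ vector field $(f_x,f_y)$ with $c(0)=p$, defined on a small interval $I\ni 0$. Setting $g(s):=B(c(s))$, we have $g(0)=B(p)=0$ and, by the identity above, $g'(s)=\nabla B(c(s))\cdot\nabla f(c(s))=2g(s)\,(f_{xx}+f_{yy})(c(s))$, a linear homogeneous ODE, so $g\equiv 0$; thus the image of $c$ lies in $\{B=0\}$. Consequently $|c'(s)|^2=|\nabla f(c(s))|^2=1-B(c(s))=1$, so $c$ is a regular curve, and I would set
\[
   \gamma(s):=\bigl(f(x(s),y(s)),\,x(s),\,y(s)\bigr),
\]
a $C^\infty$-curve contained in the graph of $f$ with $\gamma(0)=(f(x_0,y_0),x_0,y_0)$.

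Finally I would verify that $\gamma$ is a non-degenerate regular null curve. Because $(x',y')=c'=(f_x,f_y)\circ c$, the first component of $\gamma'=(f_xx'+f_yy',\,x',\,y')$ equals $|\nabla f(c(s))|^2=1$, so $\gamma'(s)=(1,x'(s),y'(s))$ with $x'(s)^2+y'(s)^2=1$; hence $\inner{\gamma'(s)}{\gamma'(s)}=-1+x'^2+y'^2=0$ and $\gamma'(s)\neq\vec{0}$, so $\gamma$ is a regular null curve. Differentiating again, the first component of $\gamma'$ is constant, so $\gamma''(s)=(0,x''(s),y''(s))$, and using $x'=f_x$, $y'=f_y$ along $c$ together with the formulas for $B_x,B_y$ above, $(x'',y'')=(f_{xx}f_x+f_{xy}f_y,\,f_{xy}f_x+f_{yy}f_y)=-\frac12\nabla B(c(s))$. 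Shrinking $I$ so that $\nabla B\neq\vec{0}$ on $c(I)$ (possible since $\nabla B(p)\neq\vec{0}$), we get $\gamma''(s)\neq\vec{0}$ with vanishing first component while the first component of $\gamma'(s)$ is $1$, so $\gamma''(s)$ cannot be proportional to $\gamma'(s)$. Thus $\gamma$ is non-degenerate on $I$, which proves the proposition.

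The only non-routine step is the first one: recognizing that the desired curve is an integral curve of $\nabla f$ restricted to $\{B=0\}$, which is made to work by the identity $\nabla f\cdot\nabla B=2B(f_{xx}+f_{yy})$ coming from \eqref{zm}; after that everything is a short computation, and the resulting $\gamma$ satisfies $(x'',y'')=-\frac12\nabla B$, matching the relations in the proof of Proposition \ref{prop:Kl1}. As a check, for $\Cat_0$ (where $f=y\tanh x$) one has $B=\op{sech}^2 x\,(1-y^2\op{sech}^2 x)$, whose zero set consists of $y=\pm\cosh x$; along $y=\cosh x$ the field $\nabla f$ equals $(\op{sech} x,\tanh x)$, which is tangent there, and the construction produces the non-degenerate null curve $s\mapsto(\sinh s,\,s,\,\cosh s)$.
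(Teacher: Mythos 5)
Your proof is correct and rests on the same two ingredients as the paper's: the identity $\nabla f\cdot\nabla B=2B(f_{xx}+f_{yy})$ forced by \eqref{zm}, and the computation $(x'',y'')=-\tfrac12\nabla B$ giving non-degeneracy from $\nabla B(p)\ne\vec 0$. The only (minor) difference is in how the curve is produced: the paper applies the implicit function theorem to $B$ and then observes that the level curve is tangent to $\nabla f$, whereas you integrate $\nabla f$ directly and use uniqueness for the linear ODE $g'=2(f_{xx}+f_{yy})g$ to see that the flow preserves $\{B=0\}$ --- an equivalent and equally valid route to the same null curve.
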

\begin{proof}
 By the implicit function theorem,
 there exists a unique $C^\infty$-regular curve 
 $\sigma(t) =(x(t),y(t))$ in the $xy$-plane with $p=\sigma(0)$ so 
 that $B=0$ along the curve.
 Since $B=0$ on $\sigma$, the velocity vector $\sigma'$ is 
 perpendicular to $\nabla B$.
 Since $\nabla f$ is also perpendicular to $\nabla B$,
 we can conclude that $\nabla f$ is proportional to
 $\sigma'$. In fact
 \begin{align*}
    -\frac12\nabla f\cdot \nabla B
      &=(f_x,f_y)
          \pmt{f_xf_{xx}+f_yf_{xy}\\f_xf_{xy}+f_yf_{yy}}
       = f_x^2f_{xx}+2f_xf_yf_{xy}+f_y^2f_{yy}\\
      &= (1-f_y^2)f_{xx}+2f_xf_yf_{xy}+(1-f_x^2)f_{yy}
            -(1-f_x^2-f_y^2)(f_{xx}+f_{yy})\\
     &=0.
 \end{align*}
 Since $f_x^2+f_y^2=1$, by taking an arclength
 parameter of $\sigma$, we may set
 \[
     x'=f_x,\qquad y'=f_y,
 \]
 and then
 \[
     B=1-f_x^2-f_y^2=1-(x')^2-(y')^2=0
 \]
 holds along $\sigma$,
 which implies that $t\mapsto (t,x(t),y(t))$ is a null curve.
 Since
 \[
    \frac{d}{dt}f(x(t),y(t))=x'f_x+y'f_y=f_x^2+f_y^2=1,
 \]
 there exists a constant $c$ such that
 $f(x(t),y(t))=t+c$. By translating the graph vertically if necessary,
 we may assume that
 \[
     f(x(t),y(t))=t
 \]
 holds for each $t$.
 Then we obtain the identity \eqref{eq:id}
 in this situation, which
 implies that $\nabla B(p)=(x''(0),y''(0))\ne 0$,
 namely, 
 \[
    (a,b)\ni t\mapsto (f(x(t),y(t)),x(t),y(t))=(t,x(t),y(t))\in \R^3_1
 \]
 gives a non-degenerate null curve near $t=0$
 lying in the graph of $f$.
\end{proof}
\begin{definition}[\cite{Extended}]
\label{def:maxface}
 Let $\Sigma^2$ be a Riemann surface.
 A $C^\infty$-map $\phi:\Sigma^2\to \R^3_1$
 is called a {\it  generalized maximal surface\/}
 if there exists an open dense subset $W$ of $\Sigma^2$
 such that the restriction $\phi|_W$ of $\phi$ 
 to $W$ gives a conformal (space-like) 
 immersion of zero mean curvature.
 A {\it singular point\/} of $\phi$ is a point at which
 $\phi$ is not an immersion.
 A singular point $p$ satisfying $d\phi(p)=0$
 is called a {\it branch point\/}  of $\phi$.  
 Moreover, $\phi$ is called a {\it maxface\/}
 if $\phi$ does not have any branch points.
 (A maxface may have singular points in general).
\end{definition}
\begin{remark}
 The above definition of maxfaces is 
 given in \cite{Extended}, which is 
 simpler than the definition
 given in \cite{UY} and \cite{FSUY}. 
 However, this
 new definition is equivalent to the previous one,
 as we now explain.
 Suppose that $\phi|_W$ is a conformal (space-like) 
 immersion of zero mean curvature.
 Then $\partial \phi=\phi_z\,dz$ is a $\C^3$-valued
 holomorphic 1-form on $W$, 
where $z$ is a complex coordinate of $\Sigma^2$.
 Since $\phi$ is a $C^\infty$-map on $\Sigma^2$,
 $\partial \phi$ can be  holomorphically extended to $\Sigma^2$.
 Then the line integral 
 $\Phi(z)=\int_{z_0}^z  \partial \phi$
 with respect to a base point $z_0\in \Sigma^2$
 gives a holomorphic map defined on 
 the universal cover of $\Sigma^2$ whose real part 
 coincides with $\phi(z)-\phi(z_0)$.
 The condition that $\phi$ does not have any branch point
 implies that $\Phi$ is an immersion. 
 Moreover, since $\phi$ is conformal on $W$, 
 $\Phi$ satisfies
 \[
    -(d\Phi_0)^2+(d\Phi_1)^2+(d\Phi_2)^2=0\qquad
    \bigl(\Phi=(\Phi_0,\Phi_1,\Phi_2)\bigr),
 \]
 namely, $\Phi$ is a null immersion.
 So $\phi$ satisfies the definition of maxface as in
 \cite{UY} and \cite{FSUY}.
 We call $\Phi$ the {\em holomorphic lift\/} of 
 the maxface $\phi$.
\end{remark}
\begin{remark}
 By the above definition, maxfaces
 are orientable. However, there are  
 non-orientable maximal surfaces,
 as shown in \cite{FL}.
 The definition of non-orientable maxfaces
 is given in \cite[Def.\ 2.1]{FL}.
 In this paper, we work only with orientable maximal
 surfaces.
 It should be remarked that non-orientable
 maxfaces will be orientable
 when taking double coverings. 
\end{remark}
Let $\phi:\Sigma^2\to \R^3_1$
be a maxface with Weierstrass data $(G,\eta)$ 
(see \cite{UY} for the definition of Weierstrass data).
Using the data $(G,\eta)$, the maxface $\phi$ has the expression
\begin{equation}\label{eq:Wei}
  \phi=\Re(\Phi),\quad
  \Phi=\int_{z_0}^z (-2G, 1+G^2,i(1-G^2))\eta.
\end{equation}
The imaginary part
\begin{equation}\label{eq:Conj}
  \phi^*:=\Im(\Phi):\tilde \Sigma^2\longrightarrow \R^3_1
\end{equation}
also gives a maxface called the {\it conjugate surface\/} of $\phi$,
which is defined on the universal cover $\tilde \Sigma^2$ of $\Sigma^2$.
The following fact is known:
\begin{fact}[\cite{UY,FSUY}]
\label{fact:sing}
 A point $p$ of $\Sigma^2$ is  a singular point of $\phi$
 if and only if $|G(p)|=1$.
\end{fact}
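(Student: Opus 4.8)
Recall that a singular point of $\phi$ is a point where $\phi$ is not an immersion, i.e.\ where $d\phi$ is not injective. The plan is to compute the first fundamental form $ds^2=\inner{d\phi}{d\phi}$ of $\phi$ explicitly from the Weierstrass data $(G,\eta)$ and to read off where it degenerates. Fix a local holomorphic coordinate $z$ on $\Sigma^2$ and write $\eta=\hat\eta\,dz$ with $\hat\eta$ holomorphic. Since $\phi=\Re\Phi$ is real valued and $\Phi$ is holomorphic, $\partial\phi=\phi_z\,dz=\tfrac12\,\partial\Phi$, so by \eqref{eq:Wei}
\[
  \phi_z=\tfrac12\,\hat\eta\,w,\qquad
  w:=\bigl(-2G,\;1+G^2,\;i(1-G^2)\bigr).
\]
A direct algebraic computation gives $\inner{w}{w}=0$ (the null condition for $\Phi$ noted above) and $\inner{w}{\overline w}=2(1-|G|^2)^2$. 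Extending $\inner{~}{~}$ complex bilinearly and using $\phi_{\bar z}=\overline{\phi_z}$, one gets $\inner{\phi_z}{\phi_z}=0$, whence
\[
  ds^2=2\,\inner{\phi_z}{\overline{\phi_z}}\,|dz|^2=|\hat\eta|^2\,(1-|G|^2)^2\,|dz|^2 ,
\]
a formula valid wherever $G$ is finite. Writing this down is the routine part.

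Next I would use the hypothesis that $\phi$ is a \emph{maxface}, which by Definition~\ref{def:maxface} means it has no branch points, so $d\phi$ never vanishes. The vector $w$ is never zero for finite $G$ (one cannot have $G=0$ and $1+G^2=0$ at once), so $\phi_z(p)=0$ would force $\hat\eta(p)=0$; hence $\hat\eta$ is nonvanishing at every point where $G$ is finite, and among such points the zero locus of $ds^2$ is exactly $\{\,|G|=1\,\}$.

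For the ``only if'' direction, suppose $|G(p)|\neq1$. If $G(p)$ is finite, then $\hat\eta(p)\neq0$ by the preceding paragraph, so $ds^2$ is positive definite at, hence near, $p$; thus $\phi$ is a space-like immersion near $p$ and $p$ is not a singular point. If $G(p)=\infty$, the statement is local and reduces to the finite case via the standard device of replacing $(G,\eta)$ near $p$ by Weierstrass data whose Gauss map vanishes at $p$; alternatively one checks directly that $\hat\eta\,G^2$ is holomorphic and nonzero at $p$ and that $ds^2$ then extends to a positive definite metric there. For the ``if'' direction, suppose $|G(p)|=1$, say $G(p)=e^{i\theta}$. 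A short computation shows $w(p)=2e^{i\theta}\,v_0$ with $v_0:=(-1,\cos\theta,\sin\theta)$, a \emph{real} null vector; since $\hat\eta(p)\neq0$ this gives $\phi_z(p)=\mu\,v_0$ for some $\mu\in\C\setminus\{0\}$, so $d\phi(p)=\Re(\mu\,dz)\,v_0$ has rank one. Hence $\phi$ is not an immersion at $p$, i.e.\ $p$ is a singular point.

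I expect the only genuinely delicate point to be this last step: one must see not merely that $ds^2$ vanishes at $p$, but that $d\phi(p)$ has rank exactly one — rank zero being ruled out by the no-branch-point hypothesis, and the image being forced to be a light-like line by the proportionality $\phi_z(p)=\mu v_0$. The remaining care is the bookkeeping at the poles of $G$ in the ``only if'' direction.
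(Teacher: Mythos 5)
The paper does not prove this statement at all: it is quoted as a Fact with citations to \cite{UY} and \cite{FSUY}, so there is no in-paper argument to compare against. Your proof is correct and is in substance the standard argument from those references: one computes the induced metric $ds^2=|\hat\eta|^2(1-|G|^2)^2|dz|^2$ from the Weierstrass data and locates its degeneracy. You are also right to flag the one genuinely non-routine point — in Lorentzian signature a degenerate pullback metric does not by itself preclude $d\phi(p)$ having rank two, so the explicit identification $\phi_z(p)=\mu v_0$ with $v_0=(-1,\cos\theta,\sin\theta)$ a \emph{real} null vector (forcing the image of $d\phi(p)$ into the line $\R v_0$) is needed and is handled correctly, as is the bookkeeping $\hat\eta G^2\neq 0$ at poles of $G$.
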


\begin{definition}\label{def:dg}
 A singular point $p$ of $\phi$ is called  
 {\it non-degenerate\/} if $dG$ does not vanish at $p$.
\end{definition}

\begin{fact}[\cite{UY,FSUY}]\label{fact:non-d}
 If a singular point $p$ of $\phi$ is 
 non-degenerate, then there exists a 
 neighborhood $U$ of $p$ and a regular curve 
 $\gamma(t)$ in $U$ so that $\gamma(0)=p$ and
 the singular set of $\phi$ in $U$
 coincides with the image of the curve $\gamma$.
\end{fact}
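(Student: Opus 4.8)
The plan is to realize the singular set as the zero level set of a single real-valued function with non-vanishing differential, and then invoke the implicit function theorem. By Fact~\ref{fact:sing}, the singular set of $\phi$ is exactly $\{\,|G|=1\,\}$. Since $|G(p)|=1$ is a finite value, the meromorphic function $G$ is holomorphic on a neighborhood of $p$, and by Definition~\ref{def:dg} the hypothesis that $p$ is non-degenerate means precisely that $dG(p)\neq 0$.

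First I would consider the real-analytic function $\rho:=|G|^2-1=G\overline{G}-1$, defined near $p$, whose zero set is the singular set. Writing $z=x+iy$ for a local complex coordinate centered at $p$, one computes $d\rho=\overline{G}\,dG+G\,d\overline{G}=2\,\Re\bigl(\overline{G}\,G_z\,dz\bigr)$, so that as a real $1$-form $d\rho=2\,\Re(\overline{G}G_z)\,dx-2\,\Im(\overline{G}G_z)\,dy$. This vanishes at $p$ only if $\overline{G}(p)\,G_z(p)=0$; but $|G(p)|=1\neq 0$ and $G_z(p)\neq 0$ by non-degeneracy, hence $d\rho(p)\neq 0$.

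Then the implicit function theorem, applied to $\rho$ on a sufficiently small neighborhood $U$ of $p$, produces a regular curve $\gamma(t)$ (in fact real analytic) with $\gamma(0)=p$ whose image is precisely $\{\,q\in U\;:\;\rho(q)=0\,\}$. By Fact~\ref{fact:sing} this set is the singular set of $\phi$ in $U$, which is the desired conclusion. Equivalently and more invariantly: $dG(p)\neq 0$ makes $G$ a biholomorphism from a neighborhood of $p$ onto a neighborhood of $G(p)$ in $\C$, so $\{\,|G|=1\,\}$ near $p$ is the $G$-preimage of the regular real-analytic curve $\{\,|w|=1\,\}$, hence itself a regular real-analytic curve through $p$.

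I do not expect a genuine obstacle here: this is the standard local structure statement for non-degenerate singularities of maxfaces. The only points that require a little care are the two observations that make the argument run, namely that $G$ is automatically holomorphic near $p$ because $|G(p)|=1$ is finite, and that $d\rho(p)$ is non-zero as a \emph{real} $1$-form — which follows from $dG(p)\neq 0$ only because $|G(p)|=1\neq 0$, so that multiplication by $\overline{G}(p)$ does not annihilate it.
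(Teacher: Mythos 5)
Your argument is correct: since $|G(p)|=1$ forces $G$ to be holomorphic near $p$, and $dG(p)\neq 0$ together with $G(p)\neq 0$ gives $d\bigl(|G|^2-1\bigr)(p)=2\,\Re\bigl(\overline{G}(p)G_z(p)\,dz\bigr)\neq 0$ as a real $1$-form, the implicit function theorem (or, equivalently, pulling back the unit circle under the local biholomorphism $G$) yields the regular singular curve. The paper itself offers no proof of this Fact --- it is quoted directly from \cite{UY} and \cite{FSUY} --- and your argument is precisely the standard one carried out in those references, so there is nothing further to compare.
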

This curve $\gamma$ is called the {\it singular curve\/}
at the non-degenerate singular point $p$.

\begin{definition}\label{def:fold}
 A regular curve $\gamma$ on $\Sigma^2$ is called
 a {\it non-degenerate  fold singularity\/}
 if it consists of non-degenerate singular points
 such that the real part of
 the meromorphic function $dG/(G^2\eta)$ vanishes identically
 along the singular curve $\gamma$.
 Each point on the non-degenerate  fold singularity
 is called a {\it fold singular point}.
\end{definition}
A singular point of $C^\infty$-map $\phi:\Sigma^2\to \R^3$
has a {\it fold singularity} at $p$ if there exists a
local coordinate $(u,v)$ centered at $p$ such that
$\phi(u,v)=\phi(u,-v)$. 
Later, we show that a non-degenerate  fold singularity is 
actually a fold singularity (cf.\ Lemma~\ref{lem:max-ext}).

Suppose that $p$ is  a non-degenerate
fold singular point of $\phi$. 
The following  duality between fold singularities and generalized 
cone-like singularities (cf.\ \cite[Definition 2.1]{FRUYY}) holds:
\begin{proposition}[\cite{KY2}]\label{prop:dual}
 Let $\phi:\Sigma^2\to \R^3_1$
 be a maxface and $\phi^*$ the conjugate maxface. 
 Then  $p$ is a non-degenerate fold singular point
 of $\phi$ if and only if
 it is a generalized cone-like singular point
 of $\phi^*$.
\end{proposition}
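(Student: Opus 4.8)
The plan is to reduce both conditions to one and the same condition on the singular curve, using the fact that $\phi$ and $\phi^*$ are the real and imaginary parts of a single holomorphic map. Write the Weierstrass data of $\phi$ as $(G,\eta)$ with $\eta=\hat\eta\,dz$ for a local complex coordinate $z=u+iv$, so that $\phi=\Re\Phi$, $\phi^*=\Im\Phi$ and $\Phi_z=(-2G,1+G^2,i(1-G^2))\hat\eta$ by \eqref{eq:Wei} and \eqref{eq:Conj}. Since $\Phi$ is holomorphic we have $\phi^*_u=-\phi_v$ and $\phi^*_v=\phi_u$; in particular $\phi$ and $\phi^*$ share the same singular set, which by Facts~\ref{fact:sing} and~\ref{fact:non-d} is, near the non-degenerate singular point $p$, a regular real analytic curve $\gamma$ with $|G|\equiv1$ along it. I identify a tangent vector $\alpha\,\partial_u+\beta\,\partial_v$ at a point of $\gamma$ with the complex number $\alpha+i\beta$ (so that $dz$ becomes the identity), and write $\dot z$ for the velocity of $\gamma$ under this identification. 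The whole argument is local along $\gamma$.

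The heart of the matter is a single substitution. Along $\gamma$ write $G=e^{i\theta}$ with $\theta=\theta(t)$ real; inserting this into $\Phi_z$ gives, pointwise along $\gamma$,
\[
   \Phi_z=\zeta\,\vect{w},\qquad \zeta:=2e^{i\theta}\hat\eta,\qquad \vect{w}:=(-1,\cos\theta,\sin\theta),
\]
where $\vect{w}$ is a nowhere-vanishing light-like vector, and $\zeta\ne0$ since $\phi$, being a maxface, has no branch points. Hence, for $\xi\in\C$ identified with a tangent vector as above,
\[
   d\phi(\xi)=\Re(\xi\zeta)\,\vect{w},\qquad d\phi^*(\xi)=\Im(\xi\zeta)\,\vect{w}\qquad\text{along }\gamma,
\]
so $d\phi$ and $d\phi^*$ have rank one there, with kernels $i\R\bar\zeta$ and $\R\bar\zeta$ respectively (as subsets of $\C$). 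Next, differentiating $G\bar G\equiv1$ along $\gamma$ and using $\bar G=1/G$ there shows that $\dot z$ is a real multiple of $i\,\overline{G_z/G}$, while $\hat\eta=\zeta/(2G)$ along $\gamma$ gives
\[
   \frac{dG}{G^2\eta}=\frac{G_z}{G^2\hat\eta}=\frac{2\,(G_z/G)}{\zeta}\qquad\text{along }\gamma .
\]
Putting these together yields the chain of equivalences: $\Re(dG/(G^2\eta))\equiv0$ on $\gamma$ $\iff$ $G_z/G\in i\R\zeta$ on $\gamma$ $\iff$ $i\,\overline{G_z/G}\in\R\bar\zeta$ $\iff$ $\dot z\in\ker d\phi^*$ on $\gamma$ $\iff$ $\phi^*\circ\gamma$ is constant. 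By Definition~\ref{def:fold} the first condition says exactly that $p$ is a non-degenerate fold singular point of $\phi$ (the nearby singular points being automatically non-degenerate by continuity of $dG$), and the last says that $\phi^*$ collapses the singular curve to the single point $\phi^*(p)$.

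It then remains to identify ``$\phi^*\circ\gamma$ constant near $p$, together with $dG(p)\ne0$'' with ``$p$ is a generalized cone-like singular point of $\phi^*$'' in the sense of \cite[Definition~2.1]{FRUYY}. The same substitution does this: choosing coordinates $(t,s)$ with $\gamma=\{s=0\}$, one finds $\phi^*(t,s)-\phi^*(p)=s\bigl(\lambda(t)\,\vect{w}(\theta(t))+O(s)\bigr)$ with $\lambda$ a nowhere-zero real function, so the directions $\bigl(\phi^*(t,s)-\phi^*(p)\bigr)/s$ converge, as $s\to0$, to the curve $t\mapsto\lambda(t)\,\vect{w}(\theta(t))$, which is regular precisely because $\dot\theta\ne0$, i.e.\ because $dG\ne0$ along $\gamma$; this is exactly the normal form of a generalized cone-like singularity, and the construction is reversible. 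I expect the main difficulty to be this last bookkeeping step---reconciling the normal form above, in both directions, with the precise non-degeneracy clause in \cite[Definition~2.1]{FRUYY}---since the geometric content, that a fold of $\phi$ along $\gamma$ is the same as $\gamma$ being a null direction of the conjugate $\phi^*$ and hence the same as $\gamma$ collapsing to a point under $\phi^*$, is already encoded in the two displayed identities $\Phi_z=\zeta\,\vect{w}$ and $dG/(G^2\eta)=2\,(G_z/G)/\zeta$.
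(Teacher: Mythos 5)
Your computations are correct, and they establish the statement by a route that is genuinely more self-contained than the paper's: the paper's entire proof is the observation that Definition~\ref{def:fold} and the definition of generalized cone-like singular points in \cite[Definition 2.1 and Lemma 2.3]{FRUYY} match up once one notes that $\phi^*$ has Weierstrass data $(G,\pm i\eta)$, so that the fold condition $\Re\bigl(dG/(G^2\eta)\bigr)\equiv 0$ for $\phi$ is literally the condition $dG/(G^2\eta^*)\in\R$ characterizing cone-like points of $\phi^*$. What you do instead is reprove the substance of \cite[Lemma 2.3]{FRUYY} from scratch: the identity $\Phi_z=\zeta\,\vect{w}$ along the singular curve, the resulting rank-one formulas $d\phi(\xi)=\Re(\xi\zeta)\vect{w}$ and $d\phi^*(\xi)=\Im(\xi\zeta)\vect{w}$, the computation $\dot z\in i\R\,\overline{G_z/G}$ from $|G|\equiv1$, and the chain of equivalences reducing $\Re\bigl(dG/(G^2\eta)\bigr)\equiv0$ to $\dot z\in\ker d\phi^*$, i.e.\ to $\phi^*\circ\gamma$ being constant --- all of this checks out, and it has the merit of exhibiting the geometric content (a fold of $\phi$ along $\gamma$ is the same as $\gamma$ being collapsed by $\phi^*$) rather than hiding it in a citation. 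The one soft spot is exactly the one you flag: the final identification of ``$\phi^*\circ\gamma$ constant and $dG\ne0$ along $\gamma$'' with the precise non-degeneracy clause of \cite[Definition 2.1]{FRUYY} is left as a normal-form sketch, and this is precisely the bookkeeping that the paper outsources to \cite[Lemma 2.3]{FRUYY}; to make your argument fully rigorous you would either quote that lemma (at which point your kernel computation becomes redundant and the short conjugation argument suffices) or state the definition verbatim and verify your normal form against it. You should also record explicitly that a degenerate singular point ($dG(p)=0$) satisfies neither condition, so that the equivalence is not vacuously broken there.
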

\begin{proof}
 This assertion is immediate from comparison of
 the above definition of non-degenerate
 fold singularity and the definition of
 generalized cone-like singular points as in \cite[Definition 2.1
 and Lemma~2.3]{FRUYY}.
\end{proof}

We now show the following assertion,
which characterizes the non-degenerate fold singularities on maxfaces.

\begin{theorem}\label{thm:main}
 Let $\phi:\Sigma^2\to \R^3_1$ be a maxface
 which has non-degenerate fold singularities 
 along a singular curve $\gamma:(a,b) \rightarrow \Sigma^2$.
 Then $\hat\gamma:=\phi\circ \gamma$ is a 
 non-degenerate null curve,
 and the image of the map
 \begin{equation}\label{eq:Bj}
   \tilde \phi(u,v):=\frac{\hat\gamma(u+v)+\hat\gamma(u-v)}2
 \end{equation}
 is real analytically connected to the image of
 $\phi$ along $\gamma$ as a time-like minimal immersion.
 Conversely, any real analytic zero mean curvature immersion
 which changes type across a non-degenerate
 null curve is obtained as a real analytic extension
 of non-degenerate fold singularities of a maxface.
\end{theorem}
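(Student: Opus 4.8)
The plan is to settle the forward direction by a direct computation with the Weierstrass data of $\phi$ restricted to the fold singular curve, and then to deduce the converse from it together with Proposition~\ref{thm:KKSY}.

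\emph{Forward direction.} First I would record that $\phi$, being a maxface, is real analytic and that, by Fact~\ref{fact:non-d}, the singular curve $\gamma$ is a regular real analytic curve (at a non-degenerate singular point $d(|G|^2)=2\Re(\overline G\,dG)$ does not vanish, since $G_z\neq0$ and $|G|=1$). Complexifying $\gamma$ and inverting, one obtains near any point of $\gamma$ a holomorphic coordinate $z=u+iv$ for which $\gamma=\{v=0\}$, parametrized by $u$; write $\eta=\hat\eta\,dz$ and $\hat\gamma(u)=\phi(u,0)=\Re\Phi(u)$, where $\Phi$ is the holomorphic lift. Along $\{v=0\}$ we have $|G|=1$ by Fact~\ref{fact:sing}, so $G=e^{i\theta(u)}$, and setting $\mu:=G\hat\eta$ a short computation from \eqref{eq:Wei} gives
\[
  \Phi'(u)=2\mu(u)\,(-1,\cos\theta(u),\sin\theta(u)),\qquad
  \hat\gamma'(u)=2\Re\bigl(\mu(u)\bigr)\,(-1,\cos\theta(u),\sin\theta(u)).
\]
The three hypotheses then translate as follows: $\phi$ has no branch point, so $\hat\eta\neq0$, i.e.\ $\mu\neq0$; the singular points are non-degenerate, so $dG\neq0$ along $\gamma$, i.e.\ $\theta'\neq0$ (since $G_z=i\theta'G$ on $\{v=0\}$); and the non-degenerate fold condition $\Re\bigl(dG/(G^2\eta)\bigr)=0$ of Definition~\ref{def:fold} becomes, using $dG/(G^2\eta)=i\theta'/\mu$ along $\gamma$, the statement that $\mu$ is real along $\gamma$.

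Granting that $\mu$ is real and nonzero along $\gamma$, the rest follows quickly. The displayed formula shows $\hat\gamma$ is regular, and it is null because $\inner{(-1,\cos\theta,\sin\theta)}{(-1,\cos\theta,\sin\theta)}=0$; differentiating,
\[
  \hat\gamma''(u)=2\mu'(u)\,(-1,\cos\theta,\sin\theta)+2\mu(u)\theta'(u)\,(0,-\sin\theta,\cos\theta),
\]
and since $\mu\theta'\neq0$ while $(0,-\sin\theta,\cos\theta)$ is not proportional to $(-1,\cos\theta,\sin\theta)$, the vector $\hat\gamma''$ is not proportional to $\hat\gamma'$, so $\hat\gamma$ is a non-degenerate null curve. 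Moreover $\Phi'(u)=2\mu(u)(-1,\cos\theta,\sin\theta)$ is now real-valued, hence $\Im\Phi$ is constant along $\gamma$ and $\Phi(u)=\hat\gamma(u)+ic$ for a constant vector $c$; by analytic continuation $\Phi(w)=\hat\gamma(w)+ic$, and, since $\hat\gamma$ has real components,
\[
  \phi(u,v)=\Re\Phi(u+iv)=\Re\hat\gamma(u+iv)=\frac{\hat\gamma(u+iv)+\hat\gamma(u-iv)}{2}.
\]
Thus $\phi$ coincides near $\gamma$ with the map denoted $\Phi$ in Proposition~\ref{thm:KKSY} for the non-degenerate null curve $\hat\gamma$; that proposition then identifies the image of $\phi$ near $\gamma$ with one half of a real analytic immersion whose other half is the time-like minimal surface $\tilde\phi(u,v)=(\hat\gamma(u+v)+\hat\gamma(u-v))/2$, the two meeting along $\hat\gamma$. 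Everything here is local along $\gamma$; the local statements glue, and the analytic extension is unique.

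\emph{Converse.} Let $S$ be a real analytic zero mean curvature immersion changing type across a non-degenerate null curve $\gamma=(t,x(t),y(t))$ (so $x'^2+y'^2=1$), with projection $\sigma=(x(t),y(t))$. Along $\gamma$ the tangent plane of $S$ is the null plane $\gamma'(t)^\perp$ — it is a limit of space-like planes, hence degenerate, and its null direction must be $\gamma'$ — which contains no vector $(a,0,0)$ with $a\neq0$; so $(t,x,y)\mapsto(x,y)$ restricts near each point of $\gamma$ to a real analytic diffeomorphism and $S$ is locally the graph $t=f(x,y)$ of a real analytic solution of \eqref{zm}. By the computation in the proof of Proposition~\ref{prop:Kl1} one has $f=t$, $f_x=x'$, $f_y=y'$ along $\sigma$, so the $1$-jet of $f$ along $\sigma$ is prescribed by $\gamma$; although $\sigma$ is a characteristic curve of \eqref{zm} (on $\sigma$ the symbol is $(f_x\xi_1+f_y\xi_2)^2$, which vanishes on the conormal directions of $\sigma$ because $\sigma'=\nabla f$ there), the non-degeneracy of $\gamma$ lets one solve successively, from \eqref{zm} and its $\sigma$-transverse derivatives, for all the normal jets of $f$ along $\sigma$, so $f$ — and hence $S$ near $\gamma$ — is uniquely determined by $\gamma$ (this local uniqueness goes back to Gu~\cite{G1,G2} and Klyachin~\cite{Kl}). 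Since the surface of Proposition~\ref{thm:KKSY} associated with $\gamma$ is also such a surface, $S$ coincides with it near $\gamma$; and that surface is a fold extension, because its space-like part $\Re\gamma(u+iv)$ is the real part of the null holomorphic immersion $w\mapsto\gamma(w)$, hence a maxface $\psi$ whose singular set near $\gamma$ is $\{v=0\}$ (there $|G|=1$; off it the induced metric is positive, by Proposition~\ref{thm:KKSY}, so $|G|\neq1$), whose singular points there are non-degenerate ($dG=-(x''+iy'')\,dz$ and $(x'',y'')\neq0$ since $\gamma$ is non-degenerate), and for which the fold condition holds since $\mu=G\hat\eta=-\tfrac12$ (because $\gamma_0(t)=t$). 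Thus $S$ near $\gamma$ is the real analytic extension of the non-degenerate fold singularities of $\psi$.

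The main obstacle will be the bookkeeping in the forward direction — above all the correct translation of the definition of a non-degenerate fold singularity into the single statement that $\mu=G\hat\eta$ is real along $\gamma$, equivalently that the holomorphic lift restricted to $\gamma$ is the complexification of $\hat\gamma$ up to an additive imaginary constant; once that is in hand, the non-degeneracy of $\hat\gamma$ and the identification $\phi(u,v)=(\hat\gamma(u+iv)+\hat\gamma(u-iv))/2$ both fall out of the same computation, and Proposition~\ref{thm:KKSY} supplies the time-like half. In the converse the delicate point is instead the local uniqueness: because \eqref{zm} is of mixed type and $\sigma$ is one of its characteristic curves, one has to check that the non-degeneracy of $\gamma$ genuinely determines all the normal jets of $f$ along $\sigma$.
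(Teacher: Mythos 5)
Your forward direction is correct and is, in substance, the paper's own argument: the paper proves that half by combining Proposition~\ref{thm:KKSY} with Lemma~\ref{lem:max-ext}, whose proof likewise chooses a coordinate in which the singular curve is the real axis (the paper uses $T\circ G$ for a M\"obius transformation $T$, you complexify the curve; both work), writes $G=e^{i\theta(u)}$ there, and reduces everything to the statement that $\xi=Gw$ (your $\mu=G\hat\eta$) is real and non-vanishing along the curve. The only real divergence is that you deduce $\Im\Phi=\text{const}$ along $\gamma$ directly from the reality of $\mu$, whereas the paper invokes the duality with generalized cone-like singularities (Proposition~\ref{prop:dual}); your route is slightly more self-contained.

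The converse contains a genuine gap, and it sits exactly where you flag the delicacy. After correctly reducing to a graph $t=f(x,y)$ and observing that the $1$-jet of $f$ along $\sigma$ is forced by $\gamma$ (the computation of Proposition~\ref{prop:Kl1}), you claim that \eqref{zm} and its $\sigma$-transverse derivatives determine all normal jets of $f$ along $\sigma$. As described, that scheme fails. At a point $p$ of $\sigma$ with $(f_x,f_y)(p)=(1,0)$, equation \eqref{zm} reads $f_{xx}(p)=0$, a compatibility condition on the $1$-jet (automatically satisfied since $x''(0)=-\tfrac12B_x(p)=0$), with no information about $f_{yy}(p)$. Differentiating \eqref{zm} once in the transverse direction $y$ and evaluating at $p$, the coefficient of $f_{yyy}$ is $1-f_x^2=0$ because $\sigma$ is characteristic, and the two terms containing $f_{yy}$ --- namely $2f_xf_{xy}f_{yy}$ coming from the middle term and $-2f_xf_{xy}f_{yy}$ coming from the last --- cancel identically, leaving only $f_{xxy}(p)=0$. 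So the first transverse derivative of the equation does not contain the second normal derivative either. One can still recover $f_{yy}(p)$, but only indirectly: substituting $f_{xy}=(y''-f_{yy}y')/x'$ along $\sigma$ and differentiating along $\sigma$ converts $f_{xxy}(p)=0$ into $y'''(0)=f_{yy}(p)\,y''(0)$, which determines $f_{yy}(p)$ precisely because $y''(0)\neq0$, i.e.\ because $\gamma$ is non-degenerate. That this mechanism persists at every order of the normal jet is the actual content of the uniqueness theorem; asserting it, or citing Gu and Klyachin for it, is circular here, since the converse you are proving essentially \emph{is} their theorem.

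The paper sidesteps this degenerate characteristic Cauchy problem entirely. In the proof of Theorem~\ref{thm:main2} it passes to isothermal coordinates $(t,s)$ on the space-like side, uses the non-degeneracy of the type change to show that the conformal coordinate $s$ extends continuously to $\sigma$, reflects the harmonic conformal immersion across $s=0$, recognizes $\gamma$ as a non-degenerate fold of the resulting maxface via Corollary~\ref{cor:main}, and then Lemma~\ref{lem:max-ext} identifies the holomorphic lift as the complexification of $\gamma$; the time-like side then agrees by real-analytic continuation. To complete your argument you should either carry out the jet induction in full (nontrivial, for the reason above) or switch to this conformal-reflection route, for which your forward direction already supplies the final identification.
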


This assertion follows immediately from 
Fact \ref{thm:KKSY} and the following
Lemmas \ref{lem:fold} and \ref{lem:max-ext}.
\begin{lemma}\label{lem:fold}
 Let $\gamma:(a,b)\to \R^3_1$
 be a real analytic non-degenerate null curve.
 Then
 \[
    \phi (u+iv):=\frac{\gamma(u+iv)+\gamma(u-iv)}2
 \]
 gives a maxface with non-degenerate fold singularities on
 the real axis. 
\end{lemma}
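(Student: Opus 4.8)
The plan is to recognize that, after holomorphic extension, the Bj\"orling-type map $\phi$ is literally the real part of $\gamma$ itself, so that $\gamma$ plays the role of the holomorphic lift $\Phi$; the lemma then becomes a matter of verifying the null/immersion conditions and reading the singular-set, non-degeneracy, and fold properties off the Weierstrass data.

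First I would observe that, since $\gamma$ maps into $\R^3_1=\R^3$ and is real analytic, its holomorphic extension (still written $\gamma$) satisfies $\gamma(\bar z)=\overline{\gamma(z)}$ on a strip containing $(a,b)$; hence $\gamma(u-iv)=\overline{\gamma(u+iv)}$ and $\phi(u+iv)=\Re\,\gamma(u+iv)$. Thus $\phi=\Re\,\Phi$ with $\Phi:=\gamma$, and by the discussion of holomorphic lifts above it suffices to check that $\Phi$ is a null holomorphic immersion near the real axis. Nullity: the holomorphic function $z\mapsto\inner{\gamma'(z)}{\gamma'(z)}$ (for the complex-bilinear extension of the Lorentzian inner product) vanishes on $(a,b)$ because $\gamma$ is a null curve, hence vanishes on the strip by the identity theorem. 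Immersion: $\gamma'$ is nowhere zero on $(a,b)$ since $\gamma$ is regular, so $\gamma'\ne0$ on a neighborhood of $(a,b)$ by continuity. Therefore $\phi$ is a maxface there.

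Next I would write $\gamma=(\gamma_0,\gamma_1,\gamma_2)$ and extract the Weierstrass data from $\Phi'=\gamma'=(-2G,\,1+G^2,\,i(1-G^2))\eta$, obtaining $\eta=\tfrac12(\gamma_1'-i\gamma_2')\,dz$, $\ G=-\gamma_0'/(\gamma_1'-i\gamma_2')$, and $G^2\eta=\tfrac12(\gamma_1'+i\gamma_2')\,dz$. Along $(a,b)$ all $\gamma_j'$ are real and the null relation reads $(\gamma_0')^2=(\gamma_1')^2+(\gamma_2')^2$, which is $>0$ by regularity; hence $\gamma_1'-i\gamma_2'\ne0$ (so $G$ is finite there) and $|G|^2=(\gamma_0')^2/((\gamma_1')^2+(\gamma_2')^2)=1$, so by Fact~\ref{fact:sing} the real axis is contained in the singular set. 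To see these singular points are non-degenerate, I would note that if $dG$ vanished at a real point then $\gamma_0''(\gamma_1'-i\gamma_2')=\gamma_0'(\gamma_1''-i\gamma_2'')$; taking real and imaginary parts, and using $\gamma_0'\ne0$ (which follows from $(\gamma_0')^2=(\gamma_1')^2+(\gamma_2')^2$ and regularity), gives $\gamma_1''=(\gamma_0''/\gamma_0')\gamma_1'$ and $\gamma_2''=(\gamma_0''/\gamma_0')\gamma_2'$, i.e.\ $\gamma''$ is proportional to $\gamma'$, contradicting non-degeneracy. Thus $dG\ne0$ on the real axis, every real point is a non-degenerate singular point (Definition~\ref{def:dg}), and by Fact~\ref{fact:non-d} the singular set near each such point is exactly the real axis.

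Finally, for the fold condition of Definition~\ref{def:fold} I would compute, along the real axis (using $(\gamma_1')^2+(\gamma_2')^2=(\gamma_0')^2$ in the denominator),
\[
  \frac{dG}{G^2\eta}
  =\frac{-2\bigl[\gamma_0''(\gamma_1'-i\gamma_2')-\gamma_0'(\gamma_1''-i\gamma_2'')\bigr]}
        {(\gamma_1'-i\gamma_2')\,(\gamma_0')^2}.
\]
Multiplying numerator and denominator by $\gamma_1'+i\gamma_2'$ turns the denominator into the positive real number $(\gamma_0')^4$, and a short computation shows the real part of the numerator equals $-2\bigl[\gamma_0''((\gamma_1')^2+(\gamma_2')^2)-\gamma_0'(\gamma_1'\gamma_1''+\gamma_2'\gamma_2'')\bigr]$. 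But differentiating $\inner{\gamma'}{\gamma'}\equiv0$ gives $\inner{\gamma''}{\gamma'}\equiv0$, i.e.\ $\gamma_1'\gamma_1''+\gamma_2'\gamma_2''=\gamma_0'\gamma_0''$, and together with $(\gamma_1')^2+(\gamma_2')^2=(\gamma_0')^2$ this bracket becomes $\gamma_0''(\gamma_0')^2-\gamma_0'(\gamma_0'\gamma_0'')=0$. Hence $\Re\bigl(dG/(G^2\eta)\bigr)$ vanishes identically along the real axis, so by Definition~\ref{def:fold} the real axis is a non-degenerate fold singularity of the maxface $\phi$, proving the lemma. The only genuine computation is this last one; the step I expect to require the most care is recognizing that $\Re\bigl(dG/(G^2\eta)\bigr)$ along the singular curve is, up to the positive factor $-2/(\gamma_0')^4$, exactly the derivative of the null relation $\inner{\gamma'}{\gamma'}=0$ in disguise, which is what forces it to vanish.
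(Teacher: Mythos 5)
Your proposal is correct and follows essentially the same route as the paper: identify $\Phi=\gamma$ as the null holomorphic lift so that $\phi=\Re\Phi$ is a maxface, read off the Weierstrass data $\eta=\tfrac12(\gamma_1'-i\gamma_2')\,dz$ and $G=-\gamma_0'/(\gamma_1'-i\gamma_2')$, check $|G|=1$ and $dG\ne 0$ on the real axis via non-degeneracy of $\gamma$, and verify the fold condition by showing $\Re\bigl(dG/(G^2\eta)\bigr)=0$ using the null relation $(\gamma_0')^2=(\gamma_1')^2+(\gamma_2')^2$ and its derivative. All computations match the paper's (your explicit use of the Schwarz reflection $\gamma(\bar z)=\overline{\gamma(z)}$ and of Fact~\ref{fact:non-d} are minor additions the paper leaves implicit).
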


\begin{proof}
 We set $z=u+iv$.
 Then it holds that
 \[
   \phi_z=\frac12(\phi_u-i \phi_v)=\frac{1}{2}\gamma'(u+iv),
 \]
 where $\gamma'(t):=d\gamma(t)/dt$.
 Since $\gamma$ is a regular real analytic curve,
 the map
 \begin{equation}\label{eq:Fdef}
     \Phi(u+iv):=\gamma(u+iv)
 \end{equation}
 gives a null holomorphic immersion if
 $v$ is sufficiently small.
 Thus $\phi=\Re(\Phi)$ gives a maxface.

 Since $\gamma$ is a null curve,
 it holds that
 \begin{equation}\label{eq:sigma}
  \gamma'_0(t)^2=\gamma'_1(t)^2+\gamma'_2(t)^2,
 \end{equation}
 where we set
 $\gamma=(\gamma_0,\gamma_1,\gamma_2)$.
 Moreover, since $\gamma$ is a regular curve,
 \eqref{eq:sigma} implies
 \begin{equation}\label{eq:s0}
  \gamma'_0(t)\ne 0 \qquad (a<t<b).
 \end{equation}
 It can be easily checked that the maxface
 $\phi$ has the Weierstrass data
 \begin{align}\label{eq:w}
  \eta&:=\frac12(d\Phi_1-id\Phi_2)=
          \frac{\gamma'_1(z)-i\gamma'_2(z)}{2}dz,\\
     g&:=-\frac{d\Phi_0}{2\eta}=
          -\frac{\gamma'_0(z)}{\gamma'_1(z)-i\gamma'_2(z)}
         =
          -\frac{\gamma'_1(z)+i\gamma'_2(z)}{\gamma'_0(z)},
  \label{eq:g}
 \end{align}
 where we set $\Phi=(\Phi_0,\Phi_1,\Phi_2)$
 and use the identity
 \[
    (\gamma'_1-i\gamma'_2)(\gamma'_1+i\gamma'_2)
             =(\gamma'_1)^2+(\gamma'_2)^2=
          (\gamma'_0)^2.
 \]
 In particular, \eqref{eq:g} implies that
 $|G|=1$
 holds on the $u$-axis,
 which implies that the $u$-axis consists of singular points.
 By \eqref{eq:g},
 $dG$ vanishes if and only if
 \[
    \Delta:=(\gamma'_1+i\gamma'_2)'\gamma'_0-
         (\gamma'_1+i\gamma'_2)\gamma''_0
      =(\gamma'_0\gamma''_1-\gamma'_1\gamma''_0)
     +i(\gamma'_0\gamma''_2-\gamma'_2\gamma''_0)
 \]
 vanishes. 
 In other words, $\Delta=0$ 
 if and only if
 $(\gamma''_0,\gamma''_j)$
 is proportional to $(\gamma'_0,\gamma'_j)$
 for $j=1,2$, namely
 $\gamma''$ is proportional to $\gamma'$.
 Since $\gamma$ is non-degenerate,
 this is impossible.
 So the image of the curve $\gamma$
 consists of
 non-degenerate singular points
 (cf.\ Definition \ref{def:dg}).

 By \eqref{eq:s0}, \eqref{eq:w} and \eqref{eq:g},
 we have that
 \[
    \frac{dz}{G^2\eta}=\frac{2(\gamma'_1-i\gamma'_2)}{(\gamma'_0)^2}.
 \]
 Thus the $u$-axis consists of non-degenerate 
 fold singular points if and only if
 $\Delta_1:=(\gamma'_1-i\gamma'_2)\Delta$
 is a real valued function.
 Here
 \[
    \Re(\Delta_1)=
     \gamma'_0(\gamma'_1\gamma''_1+\gamma'_2\gamma''_2)-
       \gamma''_0((\gamma'_1)^2+(\gamma'_2)^2)
       =\gamma'_0(\gamma'_0\gamma''_0)
        -\gamma''_0(\gamma'_0)^2=0,
 \]
 where we used the identity
 \eqref{eq:sigma} and its derivative.
 This implies that $\gamma$
 consists of non-degenerate fold singularities.
\end{proof}

Finally, we prove the converse assertion:

\begin{lemma}\label{lem:max-ext}
 Let $\phi:\Sigma^2\to \R^3_1$ be a maxface
 which has non-degenerate fold singularities along
 a singular curve $\gamma: (a,b) \rightarrow \Sigma^2$.
 Then, the space curve
 $\hat \gamma(t):=\phi\circ \gamma(t)$
 is a non-degenerate real analytic null curve
 such that
 \[
    \hat \phi(u,v):=\frac12\biggl(
          \hat\gamma(u+iv)+\hat\gamma(u-iv)\biggr)
 \]
 coincides with the original maxface $\phi$.
 In particular, $\hat{\phi}$ satisfies the identity
 $\hat{\phi}(u,v)=\hat{\phi}(u,-v)$.
\end{lemma}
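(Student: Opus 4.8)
The plan is to reduce the whole statement to a single \emph{reflection property} of the holomorphic lift $\Phi$ of $\phi$ (so that $\phi=\Re\Phi$ after a translation of $\R^3_1$, and $\phi^*=\Im\Phi$): in a holomorphic coordinate $z=u+iv$ straightening the singular curve to the real axis, $\Phi$ should have real power series coefficients, equivalently $\Phi^*(z):=\overline{\Phi(\bar z)}$ coincides with $\Phi(z)$ up to an additive purely imaginary constant, equivalently $\phi^*=\Im\Phi$ is constant along $\gamma$. (The last formulation is the geometric content of Proposition~\ref{prop:dual}: the fold singularity of $\phi$ is dual to a cone-like singularity of $\phi^*$, which collapses $\gamma$ to a point.) Granting this, the lemma follows quickly: after adjusting $\Phi$ by an imaginary additive constant --- which changes neither $\phi$ nor $\hat\gamma$ --- we may assume $\Phi^*=\Phi$, so that for real $u$ one has $\hat\gamma(u)=\phi(u)=\Re\Phi(u)=\Phi(u)$, i.e.\ the complexification of the space curve $\hat\gamma$ is exactly $\Phi$; therefore
\[
  \hat\phi(u,v)=\frac12\bigl(\Phi(u+iv)+\Phi(u-iv)\bigr)
   =\frac12\bigl(\Phi(u+iv)+\overline{\Phi(u+iv)}\bigr)=\Re\Phi(u+iv)=\phi(u,v),
\]
while $\hat\phi(u,v)=\hat\phi(u,-v)$ is visible directly from the formula defining $\hat\phi$.

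To set up the reflection property, I would first observe that near a non-degenerate singular point the singular set $\{|G|=1\}$ is a real analytic regular curve --- since $|G|^2-1$ is real analytic with non-vanishing gradient wherever $dG\neq0$ --- and hence can be straightened to an interval of the real axis by a local biholomorphism. Fixing such a coordinate $z=u+iv$, and writing $G=e^{i\theta(u)}$ and $\eta=h(z)\,dz$ along the real axis, substitution into the Weierstrass representation \eqref{eq:Wei} gives, on the real axis,
\[
  \Phi_z=2w\,(-1,\cos\theta,\sin\theta),\qquad w:=e^{i\theta}h .
\]
Since $\Phi$ is holomorphic, $\Phi'=\Phi_z$, so $\hat\gamma'=\Phi'|_{\R}$ is real along $\gamma$ --- equivalently, by Schwarz reflection, $\Phi$ has real coefficients --- precisely when $w$ is real along $\gamma$.

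The heart of the proof, and the step I expect to be the main obstacle, is to deduce this from the non-degenerate fold condition of Definition~\ref{def:fold}. Using that $G$ is holomorphic, so $\partial_uG=G'$, together with $G|_{\R}=e^{i\theta}$, one gets $G'=i\theta'e^{i\theta}$ on the real axis, whence
\[
  \frac{dG}{G^2\eta}=\frac{G'}{G^2h}=\frac{i\theta'}{w}\qquad\text{along }\gamma .
\]
By Definition~\ref{def:fold} the real part of this vanishes along $\gamma$; since $\Re(i\theta'/w)=\theta'\,\Im(w)/|w|^2$ and non-degeneracy ($dG\neq0$) forces $G'\neq0$ and hence $\theta'\neq0$, we conclude $\Im(w)\equiv0$, which is the reflection property. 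The same ingredients settle the remaining claim that $\hat\gamma=\Phi|_{\R}$ is a non-degenerate null curve: regularity and nullity are inherited from the null holomorphic immersion $\Phi$ (so also $w\neq0$), and differentiating $\Phi'=2w(-1,\cos\theta,\sin\theta)$ (now with $w$ real) along $\gamma$ produces a term $2w\theta'\,(0,-\sin\theta,\cos\theta)$ which is not proportional to $\Phi'$, so $\hat\gamma''$ is not proportional to $\hat\gamma'$. Finally, since the construction is local near a point of $\gamma$ and uses the real parameter of the straightening coordinate, I would invoke the parametrization-independence in the last part of Proposition~\ref{thm:KKSY} to patch these local identifications $\hat\phi=\phi$ along the whole curve $\gamma$.
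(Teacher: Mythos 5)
Your proof is correct and follows essentially the same route as the paper's: straighten the singular set $\{|G|=1\}$ to the real axis by a holomorphic coordinate (the paper takes $z=T\circ G$ explicitly), show that $\Phi'$ is real there because $Gw$ is real-valued and non-vanishing (your $w=Gh$ is the paper's $\xi=Gw$, and your computation $\Re(i\theta'/w)=\theta'\Im(w)/|w|^2$ is the same reality argument), and conclude $\hat\gamma=\Phi|_{\R}$, hence $\hat\phi=\Re\Phi=\phi$, with non-degeneracy read off from differentiating $2w(-1,\cos\theta,\sin\theta)$. The only divergence is cosmetic: the paper gets the constancy of $\Im\Phi$ along $\gamma$ by citing Proposition~\ref{prop:dual} and then separately verifies that $\xi$ is real, whereas you derive the reflection property directly from Definition~\ref{def:fold}, which makes the argument slightly more self-contained but is not a different method.
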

\begin{proof}
 The singular set of $\phi$ can be characterized 
 by the set $|G|=1$,
 where $(G,\eta)$ is the Weierstrass data as in \eqref{eq:Wei}.
 Let $T$ be a M\"obius transformation
 on $S^2=\C\cup \{\infty\}$ which
 maps the unit circle $\{\zeta\in\C\,;\,|\zeta|=1\}$ to the real axis.
 Then $T\circ G$ maps the image of the singular curve $\gamma$ 
 to the real axis.
 Since $dG\ne 0$ (cf. Definition \ref{def:fold}), we can 
 choose $T\circ G$ as a local complex coordinate.
 We denote it by
 \[
    z=u+iv.
 \]
 Then  the image of $\gamma$ coincides with the real axis $\{v=0\}$.
 Let $\Phi$ be the holomorphic lift of $\phi$.
 Since the real axis consists of non-degenerate fold singularities,
 Proposition~\ref{prop:dual} implies that
 $\Im(\Phi)$ is constant on the real axis.
 Since $\Phi$ has an ambiguity of translations by
 pure imaginary vectors,
 we may assume without loss of generality that
 \begin{equation}\label{eq:im-zero}
   \Im (\Phi)=0\qquad\text{on the real axis}.
 \end{equation}
 Thus,
 the curve $\hat \gamma$
 is expressed by (cf.\ \eqref{eq:Fdef})
 \begin{equation}\label{eq:hat-gamma}
   \hat\gamma(u) = \phi(u,0)=\Re\bigl(\Phi(u,0)\bigr)
                 = \Phi(u,0),
 \end{equation}
 namely, the two holomorphic functions
 $\Phi(u+iv)$ and $\hat\gamma(u+iv)$
 take the same values on the real axis.
 Hence
 $\Phi(z)=\hat\gamma(u+iv)$ and thus
 \[
   \phi(z)=\frac{\hat\gamma(u+iv)+\hat\gamma(u-iv)}2=\hat\phi(u,v).
 \]
 So it is sufficient to show that
 $\hat\gamma(t)$ is a non-degenerate 
 null curve.
 Since $|G|=1$ on the real axis,
 there exists a real-valued function $t=t(u)$ such that
 \begin{equation}\label{eq:g-unit}
  G(u)=e^{it(u)} \qquad (u\in \R).
 \end{equation}
 Differentiating this along the real axis, we have
 $G_u(u)=i e^{it}({dt}/{du})$.
 Here, $dt/du$ does not vanish because $dG\neq 0$ on the real axis.
 Since $\gamma$ consists of non-degenerate fold singularities,
 \[
   i\frac{dG}{G^2\eta}=-\frac{e^{-it(u)}}{w(u)}\frac{dt}{du}
 \]
 must be real valued (cf.\ Definition~\ref{def:fold}), 
 where $\eta=w(z)\,dz$.
 Since
 $\Phi$ is an immersion, 
 $G$ must have  a pole
 at $z=u$ if 
 $w(u)=0$ (cf.\ \eqref{eq:Wei}), 
 but this contradicts the fact that $|G|=1$ along $\gamma$.
 Thus we have $w(u)\ne 0$.
 It then follows that
 \[
    \xi(u):=e^{it(u)}w(u)=G(u)w(u)
 \]
 is a non-vanishing real valued analytic function.
 Now, if we write
 $\hat\gamma=(\hat\gamma_0,\hat\gamma_1,\hat\gamma_2)$, 
 \eqref{eq:Wei} yields
 \[
   \hat \gamma'_0(u)=\Re\biggl(-2 G(u) w(u)\biggr)=-2 \xi(u),
 \]
 and
 \begin{align*}
  \hat \gamma'_1(u)&=\Re\biggl((1+G(u)^2)w(u)\biggr)=2 
                  \xi(u)\cos t(u),\\
  \hat \gamma'_2(u)&=\Re\biggl(i(1-G(u)^2)w(u)\biggr)=2  
                  \xi(u)\sin t(u).
 \end{align*}
 This implies that
 $\hat \gamma(u)$ is a regular real analytic null curve.
 Since $dt/du\ne 0$, the acceleration vector
 \[
   \hat \gamma''(u)=\bigl(\log \xi(u)\bigr)'\gamma'(u)+2\xi(u)
           \biggl(0,-\sin t(u),\cos t(u)\biggr)
           \frac{dt}{du}
 \]
 is not proportional to $\hat \gamma'(u)$.
 Thus, $\hat \gamma(u)$ is non-degenerate.
\end{proof}

\begin{corollary}\label{cor:main}
 Let $\phi:\Sigma^2\to \R^3_1$ be a maxface.
 Then a singular point $p\in \Sigma^2$
 lies on a non-degenerate fold singularity if and only 
 if there exists a local complex coordinate
 $z=u+iv$ with $p=(0,0)$ satisfying the
 following two properties:
 \begin{enumerate}
  \item\label{item:main:1} 
       $\phi(u,v)=\phi(u,-v)$,
  \item\label{item:main:2}
       $\phi_{u}(0,0)$ and $\phi_{uu}(0,0)$
	are linearly independent.  
 \end{enumerate}
\end{corollary}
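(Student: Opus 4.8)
The plan is to read off both implications almost directly from Lemmas~\ref{lem:fold} and~\ref{lem:max-ext}, the only genuinely new ingredient being a short Schwarz-reflection argument needed for the "if" direction.

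For the "only if" direction, assume $p$ lies on a non-degenerate fold singularity. By Fact~\ref{fact:non-d} the local singular set of $\phi$ near $p$ is the image of a singular curve $\gamma$ through $p$, and by hypothesis $\gamma$ consists of non-degenerate fold singular points, so $\phi$ has non-degenerate fold singularities along $\gamma$. Applying Lemma~\ref{lem:max-ext} produces a local complex coordinate $z=u+iv$ in which the image of $\gamma$ is the $u$-axis, $\hat\gamma:=\phi\circ\gamma$ is a non-degenerate real analytic null curve, and $\phi(u,v)=\frac12(\hat\gamma(u+iv)+\hat\gamma(u-iv))$. After translating $u$ so that $\gamma(0)=p$ sits at $(0,0)$, property \ref{item:main:1} is exactly the invariance of this formula under $v\mapsto-v$, while differentiating it at $(0,0)$ gives $\phi_u(0,0)=\hat\gamma'(0)$ and $\phi_{uu}(0,0)=\hat\gamma''(0)$, which are linearly independent because $\hat\gamma$ is a regular non-degenerate curve; that is \ref{item:main:2}.

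For the "if" direction, suppose $z=u+iv$ is a local coordinate with $p=(0,0)$ satisfying \ref{item:main:1} and \ref{item:main:2}. From \ref{item:main:1} we get $\phi_v\equiv0$ along the $u$-axis, so $\phi$ is not an immersion there and the $u$-axis consists of singular points. Let $\Phi$ be the holomorphic lift of $\phi$, so $\phi=\Re(\Phi)$ and $\phi_v=\Re(\Phi_v)=\Re(i\Phi')$; hence $\Im(\Phi')$ vanishes on the $u$-axis, i.e.\ $\Phi'$ is $\R^3$-valued there, and so $\Im(\Phi)$ is constant on the $u$-axis. Subtracting the corresponding pure-imaginary constant vector from $\Phi$ (which changes neither $\phi$ nor the null-immersion property) we may assume $\Phi$ itself is $\R^3$-valued on the $u$-axis. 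Then $\hat\gamma(t):=\Phi(t)$ is a real analytic curve whose analytic extension is $\Phi$, the reflection identity $\overline{\Phi(\bar z)}=\Phi(z)$ holds on a symmetric neighborhood, and therefore $\phi(u,v)=\Re(\Phi(u+iv))=\frac12(\hat\gamma(u+iv)+\hat\gamma(u-iv))$. Since $\Phi$ is a null holomorphic immersion, $\hat\gamma$ is a regular null curve, and since $\phi_u(0,0)=\hat\gamma'(0)$, $\phi_{uu}(0,0)=\hat\gamma''(0)$, condition \ref{item:main:2} forces $\hat\gamma$ to be non-degenerate at $0$, hence on a neighborhood of $0$. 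Restricting to that neighborhood and applying Lemma~\ref{lem:fold} to $\hat\gamma$ shows the $u$-axis there is a non-degenerate fold singularity of $\phi$, on which $p$ lies.

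The only step requiring care is the reflection argument, namely upgrading the infinitesimal condition $\phi_v\equiv0$ on the $u$-axis to the exact symmetric representation $\phi(u,v)=\frac12(\hat\gamma(u+iv)+\hat\gamma(u-iv))$ with $\hat\gamma$ real analytic; the crux is the normalization $\Im(\Phi)\equiv0$ on the $u$-axis, which is available precisely because the holomorphic lift of a maxface is determined only up to translation by pure-imaginary vectors. Once that is secured, both directions reduce to the two lemmas, so no serious obstacle remains.
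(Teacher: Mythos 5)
Your proposal is correct and follows essentially the same route as the paper: both directions reduce to Lemmas~\ref{lem:max-ext} and~\ref{lem:fold}, with the ``if'' direction obtained by normalizing the holomorphic lift $\Phi$ to be real on the $u$-axis and recovering $\phi$ as $\tfrac12(\hat\gamma(u+iv)+\hat\gamma(u-iv))$. The only cosmetic difference is that you invoke the Schwarz reflection identity $\overline{\Phi(\bar z)}=\Phi(z)$ directly, whereas the paper derives nullity of $u\mapsto\phi(u,0)$ from $\inner{\Phi_z}{\Phi_z}=0$ and then identifies $\Phi$ with the lift $\Phi_\gamma$ of the surface built by Lemma~\ref{lem:fold}; these are the same argument in different packaging.
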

\begin{proof}
 Suppose $p$ is a non-degenerate fold singularity of $\phi$.
 Then \ref{item:main:1} follows from Lemma \ref{lem:max-ext},
 and \ref{item:main:2} follows from the fact that 
 the null curve parameterizing fold
 singularities is non-degenerate.

Conversely, suppose there is a coordinate system 
around a singular point 
$p$ with $p=(0,0)$ which satisfies 
\ref{item:main:1} and \ref{item:main:2}.
Differentiating \ref{item:main:1}, we have that
$\phi_v(u,0)=0$. 
Let $\Phi$ be a holomorphic lift of $\phi$.
Since $\Phi$ is a null holomorphic map,
the relation
$\phi_v(u,0)=0$ implies that
\[
   0=\inner{\Phi_z(u,0)}{\Phi_z(u,0)}
    =4\inner{\phi_z(u,0)}{\phi_z(u,0)}
    =\inner{\phi_u(u,0)}{\phi_u(u,0)},
 \]
where $\inner{}{}$ is the canonical inner product
of $\R^3_1$. 
This implies $\gamma:u\mapsto \phi(u,0)$ is a null curve.
By the condition (2), this null curve is non-degenerate.  
Then by Lemma \ref{lem:fold},
 \[
    \phi_\gamma(u+iv):=\frac{\gamma(u+iv)+\gamma(u-iv)}2
 \]
 is a maxface such that $\gamma$  parametrizes a non-degenerate fold
 singularity of $\phi_\gamma$.
 Moreover,
 $\Phi_\gamma:=\gamma(u+iv)$
 gives a holomorphic lift of $\phi_\gamma$
 (cf.\ \eqref{eq:Fdef}).
 Since
 \[
    \Phi_z(u,0)=2\phi_z(u,0)=\gamma'(u)
               =(\Phi_\gamma)_z(u,0),
 \]
 the holomorphicity of $\Phi$ and $\Phi_\gamma$
 yields that $\Phi_z(z)$ coincides with $(\Phi_\gamma)_z(z)$.
 Thus $\Phi$ coincides with $\Phi_\gamma$ up to a constant.
 Then $\phi_\gamma$ coincides with $\phi$, 
 and we can conclude that $\gamma$ parametrizes a
 non-degenerate fold singularity of $\phi$.
\end{proof}

So far, we have looked at the singular curve of maxfaces. 
Now we turn our attention to 
the singular curve of zero mean curvature surfaces and 
prove the following assertion, 
which can be considered
as the fundamental theorem of type change 
for zero mean curvature surfaces:

\begin{theorem}[Gu \cite{G,G1, G2} and Klyachin \cite{Kl}]
\label{thm:main2}
 Let $\gamma:(a,b)\to \R^3_1$ 
 be a non-degenerate real analytic null curve. 
 We set
 \[
    \hat\phi_{\gamma}(u,v):=
      \begin{cases}
       \displaystyle\frac{\gamma(u+i\sqrt{v})+\gamma(u-i\sqrt{v})}2 
               & (v\ge 0),\\[10pt]
       \displaystyle\frac{\gamma(u+\sqrt{|v|})+\gamma(u-\sqrt{|v|})}2 
               & (v<0),
      \end{cases}
 \]
 for sufficiently small $|v|$.
 Then $\hat\phi_{\gamma}$ gives a real analytic 
 zero mean curvature immersion 
 such that the image of 
 $\gamma$ consists of non-degenerate points 
 of type change with respect to $\hat \phi_\gamma$.

 Conversely, let $f:\Omega^2\to \R$ 
 be a $C^\infty$-function satisfying the
 zero mean curvature equation \eqref{zm},
 and let $p=(x_0,y_0)$ be a  non-degenerate point 
 of type change with respect to $f$, where $\Omega^2$
 is a domain in the $xy$-plane. Then there exists a real analytic 
 non-degenerate null
 curve $\gamma$ in $\R^3_1$ through
 $(f(x_0,y_0),x_0,y_0)$
 with $\hat \phi_\gamma$ coinciding with the graph of $f$ in 
 a small neighborhood of $p$.
\end{theorem}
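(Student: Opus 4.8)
The plan is to prove the two halves separately, each by reduction to facts already established.

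\textbf{Forward direction.} I would first observe that $\hat\phi_\gamma$ is not genuinely piecewise: with $H$ the real analytic germ from the proof of Proposition~\ref{thm:KKSY}, characterized by $H(u,-v^2)=\Phi(u,v)$ and $H(u,v^2)=\Psi(u,v)$, one has $\hat\phi_\gamma(u,v)=H(u,-v)$ for all small $|v|$, so $\hat\phi_\gamma$ is real analytic. Since $H_u(u,0)=\gamma'(u)$ and $H_v(u,0)=\gamma''(u)$ are linearly independent by non-degeneracy, $\hat\phi_\gamma$ is an immersion near $\{v=0\}$; and on $\{v>0\}$ and $\{v<0\}$ it is a reparametrization of the space-like maximal surface $\Phi$, respectively the time-like minimal surface $\Psi$, so it is a zero mean curvature immersion changing type along $\gamma$. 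Parametrizing $\gamma$ by its time coordinate as $\gamma=(t,x(t),y(t))$ with $(x')^2+(y')^2=1$, the tangent plane along $\gamma$ is spanned by $(1,x',y')$ and $(0,x'',y'')$, whose projections to the $xy$-plane are $(x',y')$ and $(x'',y'')$; these are linearly independent, since $(x'',y'')$ is orthogonal to $(x',y')$ and nonzero by non-degeneracy. Hence near each point of $\gamma$ the surface is the graph of a real analytic function $f_\gamma$ satisfying \eqref{zm}, which is exactly the function induced by $\gamma$ as in Proposition~\ref{thm:KKSY}, and Proposition~\ref{prop:Kl1} gives that the image of $\gamma$ consists of non-degenerate points of type change with respect to $f_\gamma$.

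\textbf{Converse direction.} Here I would start from Proposition~\ref{prop:Kl2}, which produces a $C^\infty$ non-degenerate null curve through $(f(x_0,y_0),x_0,y_0)$ lying in the graph of $f$; its image is the lift of the type-change curve $C=\{B=0\}$. On the space-like side $D^+=\{B>0\}$ equation \eqref{zm} is elliptic, so $f$ is real analytic on $D^+$, where the graph is a maxface $\phi$ without branch points (being embedded). The heart of the proof is to show that $\phi$ extends, across $C$, to a maxface with non-degenerate fold singularities along the lift of $C$. For this I would pass to the Gauss map $G$ of $\phi$: along $C$ one has $|G|\to1$; non-degeneracy of the type change (equivalently, via \eqref{eq:id}, that $\nabla B=-2(x'',y'')\ne\vec{0}$ on $C$) forces $dG\ne0$ along $C$ by the computation of $dG$ in the proof of Lemma~\ref{lem:fold}; and the fold condition $\Re\bigl(dG/(G^2\eta)\bigr)=0$ holds along $C$ by the same computation as in that proof, now using that $f$ solves \eqref{zm}. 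Composing $G$ with a M\"obius transformation taking the unit circle to $\R$ and using it as a holomorphic coordinate turns $C$ into an analytic arc on which the Weierstrass data obey the boundary relations for a fold, and a Schwarz-reflection argument extends $(G,\eta)$, hence $\phi$, real analytically across $C$. \emph{This extension step is the main obstacle}: one must check that the conformal structure, which degenerates along $C$, is still controlled well enough for $G$ to serve as a holomorphic coordinate up to the boundary.

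Once $\phi$ is extended to a maxface with non-degenerate fold singularities along a curve $\gamma_0$ with $\phi\circ\gamma_0$ the lift of $C$, Lemma~\ref{lem:max-ext} identifies $\gamma:=\phi\circ\gamma_0$ as a real analytic non-degenerate null curve through $(f(x_0,y_0),x_0,y_0)$ and yields $\phi(u,v)=\frac12\bigl(\gamma(u+iv)+\gamma(u-iv)\bigr)$, so the graph of $f$ coincides with $\hat\phi_\gamma$ on the closure of $D^+$. On the time-like side $D^-=\{B<0\}$, both the graph of $f$ and $\hat\phi_\gamma$ are $C^\infty$ time-like minimal graphs containing $\gamma$ as their type-change curve, and such a surface is uniquely $\frac12\bigl(\gamma(u+\sqrt{|v|})+\gamma(u-\sqrt{|v|})\bigr)$ by the d'Alembert-type representation of time-like minimal surfaces (equivalently, by the uniqueness statement in Proposition~\ref{thm:KKSY}); hence the two graphs agree on $D^-$ as well, and $\hat\phi_\gamma$ coincides with the graph of $f$ near $p$. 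Put differently, the real substance of the converse is that a $C^\infty$ solution of \eqref{zm} is automatically real analytic near a non-degenerate point of type change.
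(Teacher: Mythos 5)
Your forward direction is fine and is essentially the paper's: the paper disposes of it by citing Proposition~\ref{thm:KKSY} and Proposition~\ref{prop:Kl1}, and your observation that $\hat\phi_\gamma(u,v)=H(u,-v)$ for the germ $H$ of Proposition~\ref{thm:KKSY} is exactly why the piecewise definition is real analytic.

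In the converse direction, however, there is a genuine gap precisely at the step you yourself flag as ``the main obstacle.'' Your plan is to extend the maxface across the type-change curve $C$ by using $T\circ G$ as a holomorphic coordinate and Schwarz-reflecting the Weierstrass data. But to even speak of $G$ having boundary values on an analytic arc, of $dG\neq 0$ ``along $C$,'' or of the fold condition $\Re\bigl(dG/(G^2\eta)\bigr)=0$ holding ``along $C$,'' you must first know that the conformal structure of the space-like piece $\{B>0\}$ extends to $C$ at all --- i.e.\ that $C$ corresponds to a genuine boundary arc of the conformal parameter domain rather than, say, a prime end at infinite conformal distance. That is not a checkable afterthought; it is the entire analytic content of the converse, and your proposal presupposes it rather than proves it. The paper (following Klyachin \cite{Kl}) supplies exactly this ingredient: it builds explicit isothermal coordinates $t=f(x,y)$ and $s=\int_{q_0}^{q}(-f_y\,dx+f_x\,dy)/\sqrt{B}$ on $\{B>0\}$, and then uses the non-degeneracy hypothesis, in the form $B\circ\tau(v)=Cv+\mathcal{O}(v^2)$ along a transversal $\tau$, to prove that $\int_\tau|\alpha|<\infty$. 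This finiteness is what shows that $C$ is the level set $\{s=0\}$ of the parameter domain, so that $\phi(t,s)$, being harmonic, can be reflected by the symmetry principle $\phi(t,s)=\phi(t,-s)$; real analyticity of $f$ near $p$, the real analyticity of the null curve from Proposition~\ref{prop:Kl2}, and the fold structure (via Corollary~\ref{cor:main} and Theorem~\ref{thm:main}) all come out of that one estimate. Note also that the reflection is applied to the harmonic map $\phi$ itself, not to $G$ and $\eta$, which avoids having to verify the fold identity for $(G,\eta)$ before the extension exists. A secondary soft spot: your uniqueness argument on the time-like side cites ``the uniqueness statement in Proposition~\ref{thm:KKSY},'' but that proposition's final assertion concerns independence of the parametrization of $\gamma$, not uniqueness of a $C^\infty$ time-like minimal graph attached to $\gamma$ along its type-change locus; the latter is a degenerate Goursat-type uniqueness statement that needs its own justification (it is part of what \cite{Kl} proves).
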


This assertion was proved by Gu \cite{G,G1,G2}.
Later, Klyachin \cite{Kl} analyzed type-changes
of zero mean curvature surfaces not only
at non-degenerate points of type changes
but also degenerate cases as mentioned in
the introduction, and got the same assertion
as a corollary.  
Note that the conclusion for regularity of the converse 
statement is stronger than  that of 
Proposition \ref{prop:Kl2}.
We remark that Gu \cite{G2}
gave a generalization of Theorem \ref{thm:main2}
for $2$-dimensional zero mean curvature surfaces in $\R^{n+1}_1$
 ($n\ge 2$).
\begin{proof}
 We have already proved the first assertion.
 (In fact, Proposition \ref{prop:Kl1}
 implies that $\gamma$ consists of non-degenerate points 
 of type change with respect to $\hat\phi_\gamma$.)
 Then it is sufficient to show the converse assertion,
 which is proved in \cite{G,G1,G2} and  \cite{Kl}.
 Here referring to \cite{Kl},
 we give only a sketch of the proof:
 Let $f:\Omega^2\to \R$ be a 
 $C^\infty$-function
 satisfying the zero mean curvature equation \eqref{zm},
 and let $p=(x_0,y_0)\in \Omega^2$ be a non-degenerate point 
 of type change with respect to $f$.
 (As pointed out in Gu \cite{G1,G2} and Klyachin \cite{Kl},
 one can prove the real analyticity of $f$ at $p$,
 assuming only $C^3$-regularity of $f$, using the same
 argument as below.)
 By Proposition \ref{prop:Kl2},
 there exists a $C^\infty$-regular 
 curve $\sigma(u)$ ($|u|<\delta$) such that 
 \[
    \gamma(u):=(f\circ \sigma(u),\sigma(u))
 \]
 is a non-degenerate null curve passing through
 $(f(x_0,y_0),x_0,y_0)$,
 where $\delta$ is a sufficiently small 
 positive number.
 We set $B:=1-f_x^2-f^2_y$.
 Let $\Omega^+$ be a simply connected
 domain such that $B>0$, 
 and  suppose that $\sigma$
 lies on the boundary of $\Omega^+$.
 We set
 \[
     t = t(x,y) := f(x,y),\qquad
     s = s(x,y):= \int_{q_0}^q \frac{-f_ydx+f_x dy}{\sqrt{B}},
 \]
 where $q_0\in \Omega^+$ is a base point and $q:=(x,y)$.
 Since
 $\alpha:=(-f_ydx+f_x dy)/B$ is a closed $1$-form,
 its (line) integral $\displaystyle\int_{q_0}^q \alpha$
 does not depend on the choice of path.
 Let $\tau(v)=(a(v),b(v))$ ($0\le v\le \epsilon$)
 be a path starting from $p$ and going into $\Omega^+$
 which is transversal to the curve $\sigma$.
 Since $B(p)=0$ and $\nabla B(p)\ne 0$,
 there exists a constant $C>0$ such that
 $B\circ \tau(v)=C v + \mathcal{O}(v^2)$,
 where $\mathcal{O}(v^2)$ denotes  the higher order terms.  
 Then there exists a constant $m$ such that
 \[
    \left| \frac{-f_y\circ \tau(v)\,  \dfrac{da}{dv}(v) 
                + f_x\circ \tau(v)  \,\dfrac{db}{dv}(v)}
          { \sqrt{ B\circ \tau(v)}}  \right| < \frac{m}{\sqrt{v}} 
         \quad\text{ for } 0 < v\le \epsilon,
 \]
 hence
 \[
       \lambda:=\int_\tau |\alpha|<  \int_0^\epsilon 
              \frac{m}{\sqrt{v}}dv<\infty,
 \]
 which is just the case (1) of \cite[Lemma 6]{Kl},
 and $(t,s)$ gives an isothermal coordinate of $\Omega^+$
 with respect to the immersion 
 \[
      \phi:(t,s)\mapsto
    (f(x(t,s),y(t,s)),x(t,s),y(t,s))=(t,x(t,s),y(t,s)) \qquad (s>0).
 \]
 Moreover, the function $s(x,y)$ can be continuously
 extended to the image of the curve $\sigma$.
 Since $\sigma$ is an integral curve of $\nabla f$,
 we may assume that $\sigma$ parametrizes the
 level set $s=0$, where we have
 used the fact that $s(x,y)$ is constant along each
 integral curve of $\nabla f$.
 In particular, $\phi$ satisfies $\phi_{tt}+\phi_{ss}=0$.
 Then $\phi(t,s)$ can be extended to a 
 harmonic $\R^3$-valued function
 for $s<0$ satisfying $\phi(t,s)=\phi(t,-s)$
 via the symmetry principle (see the proof of \cite[Theorem 6]{Kl}).
 In particular, $f$ 
 is a real analytic function
 whose graph coincides with the image of $\phi$ on $\Omega^+$
 near $p$. 
 Moreover $t\mapsto \phi(t,0)$ parametrizes the curve
 $\gamma$ (cf.\ \cite[Page 219]{Kl}).
 By Corollary \eqref{cor:main},
 $\gamma$ can be considered as a non-degenerate fold singularity
 of the maxface $(t,s)\mapsto \phi(t,s)$.
 (In fact, the condition \ref{item:main:2} of Corollary \eqref{cor:main}
 corresponds to the fact that $\gamma$ is a non-degenerate
 curve near $p$.)
 Then Theorem \ref{thm:main} implies
 that $\hat\phi_\gamma$ coincides with the graph
 of $f$ on a sufficiently small neighborhood of $p$.
\end{proof}

As an application of Theorem \ref{thm:main2},
embedded triply periodic zero mean curvature surfaces of mixed type 
in $\R^3_1$ with the same topology 
as the Schwarz D surface in the Euclidean 3-space $\R^3$
have been constructed, in \cite{FRUYY2}.

\section{%
 The conjugates of hyperbolic catenoids and Scherk-type surfaces
}\label{sec:conjugate}

The two entire graphs of 
$n$ variables
\begin{align*}
 f_1(x_1,\dots,x_n):=x_1\tanh(x_2),\qquad
 f_2(x_1,\dots,x_n):=(\log \cosh x_1)-(\log \cosh x_2),
\end{align*}
given by Osamu Kobayashi \cite{K},
are zero mean curvature hypersurfaces in $\R^{n+1}_1$
which change type from space-like to time-like.
When $n=2$, the image of $f_1$ is congruent to $\Cat_0$
and the image of $f_2$ is congruent to $\Sch_0$.
On the other hand,
the space-like catenoid $\Cat_+$
(resp.\ the space-like Scherk surface $\Sch_+$)
and the time-like catenoid $\Cat_-$ 
(resp.\ the time-like Scherk surface $\Sch_-$)
are typical examples of zero mean curvature surfaces 
which contain singular light-like lines.
Moreover, they are closely related
to $\Cat_0$ (resp.\ $\Sch_0$)
by taking their conjugate surfaces
as follows:

\begin{fact}[\cite{K}, \cite{KKSY} and \cite{CR}]
 The conjugate space-like maximal surface of
 the space-like hyperbolic catenoid $\Cat_+$
 and
 the conjugate time-like minimal surface of
 the time-like hyperbolic catenoid $\Cat_-$
 are both congruent to subsets of the entire graph $\Cat_0$.
\end{fact}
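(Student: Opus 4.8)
The plan is to compute the Weierstrass data of the space-like hyperbolic catenoid $\Cat_+$ explicitly, pass to the conjugate surface via the operation $\Phi \mapsto \Im(\Phi)$ of \eqref{eq:Conj}, and then identify the resulting maxface with (a piece of) the graph $\Cat_0=\{t=y\tanh x\}$ by direct parametrization. First I would note that $\Cat_+$, away from its light-like lines, is a space-like maximal surface of revolution (a ``hyperbolic catenoid''), so its Weierstrass data $(G,\eta)$ can be read off from the standard list of rotationally invariant maxfaces: up to a M\"obius normalization one expects $G(z)=e^{z}$ (or $G=z$ after a coordinate change) with $\eta$ a correspondingly simple meromorphic $1$-form, chosen so that the real part of $\Phi=\int(-2G,1+G^2,i(1-G^2))\eta$ reproduces the defining relation $\sin^2 x+y^2-t^2=0$. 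Getting this normalization exactly right — matching the period and the branch of the logarithm so that the integral closes up and lands on $\Cat_+$ rather than some covering — is the first bookkeeping step.

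Next I would integrate to obtain $\Phi$ in closed form (the integrals produce elementary functions: exponentials and their squares, hence hyperbolic and trigonometric functions of the real and imaginary parts of $z$), and then simply take $\Im(\Phi)$ to get the conjugate maxface $\phi^*=\Cat_+^*$. Writing $z=u+iv$ and separating real and imaginary parts, one reads off the three coordinate functions $(\phi^*_0,\phi^*_1,\phi^*_2)$ as explicit functions of $(u,v)$; the claim to verify is that these satisfy $\phi^*_0=\phi^*_2\tanh(\phi^*_1)$ identically, i.e. that the image is congruent (after a fixed isometry of $\R^3_1$, possibly a permutation of the two space-like axes and a sign) to $\Cat_0$. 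This is a direct trigonometric-hyperbolic identity check once the parametrization is in hand. The same computation for the time-like side: $\Cat_-$ is a time-like minimal surface, so it carries a split-complex (para-holomorphic) Weierstrass-type representation $\Psi=\int(-2G,1+G^2,i(1-G^2))\eta$ with $z=u+v\,j$, $j^2=1$; taking the analogous conjugate $\Psi\mapsto$ (its ``imaginary'' part in the split sense) and repeating the algebra yields the time-like complement of the same graph $\Cat_0$.

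Alternatively — and this is probably the cleaner route given the machinery already assembled in Section~\ref{sec:fundamental} — I would avoid recomputing the time-like piece from scratch: the light-like lines contained in $\Cat_+$ and $\Cat_-$ are degenerate null lines, and passing to conjugates turns the cone-like / fold behavior along these lines (Proposition~\ref{prop:dual}) into the non-degenerate fold singularities studied in Theorem~\ref{thm:main}. Concretely, the conjugate of $\Cat_+$ acquires a non-degenerate fold singularity along a real analytic non-degenerate null curve $\hat\gamma$, and by Theorem~\ref{thm:main2} the real analytic extension $\hat\phi_{\hat\gamma}$ across $\hat\gamma$ is a zero mean curvature immersion that is space-like on one side and time-like on the other; one then checks that this extension is exactly $\Cat_0$ and that its time-like side is the conjugate of $\Cat_-$. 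The main obstacle I anticipate is purely computational stamina combined with the delicacy of conjugation on the time-like (split-complex) side — ensuring that ``conjugate'' is the right operation there, that the two halves glue along $\hat\gamma$ in the sense of Proposition~\ref{thm:KKSY}, and that all the implicit isometries of $\R^3_1$ used to get ``congruent to $\Cat_0$'' are isometries and not merely affine maps. No single step is conceptually hard; the risk is an algebra slip in the explicit parametrization, so I would double-check the final identity $\phi^*_0=\phi^*_2\tanh(\phi^*_1)$ by evaluating at a couple of special points (e.g. $v=0$, where it must reduce to the null curve $\hat\gamma$) before declaring victory.
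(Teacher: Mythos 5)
Your plan matches the paper's proof in essence: the paper writes down the holomorphic null lift $-i(\sinh z, z,\cosh z)$ of $\Cat_+$ (which is exactly the integrated Weierstrass representation you propose to compute), takes its imaginary part to get $\varphi_1^*(u,v)=-(\sinh u\cos v,u,\cosh u\cos v)$, and verifies $t/y=\tanh u=-\tanh x$; for the time-like side it writes $\Cat_-$ as $\tfrac12(\alpha(u)+\beta(v))$ for two null curves $\alpha,\beta$ and takes the conjugate $\tfrac12(\alpha(u)-\beta(v))$ (citing \cite{IT}), which is precisely the split-complex conjugation you describe, then reparametrizes by $u=\xi+\zeta$, $v=\xi-\zeta$ to land in $\Cat_0$. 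The paper also invokes Proposition~\ref{prop:dual} and Theorem~\ref{thm:main} to explain the fold singularities and the analytic extension, just as in your alternative route, so your proposal is correct and essentially the paper's argument.
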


The space-like hyperbolic catenoid $\Cat_+$ was originally given
in Kobayashi \cite{K} as the catenoid of 2nd kind, and  
Kobayashi also pointed out that 
the space-like part $\Cat_0^+$ of $\Cat_0$ is the conjugate surface of
$\Cat_+$. $\Cat_0^+$ is connected and is called
the {\it space-like hyperbolic helicoid}. 
The time-like part $\Cat_0^-$ of $\Cat_0$ splits into 
two connected components, each of 
which is congruent to
the {\it time-like hyperbolic helicoid\/}
(see Figure \ref{fig1c}, left).
The entire assertion, including the
case of the time-like part,
has been pointed out
in \cite[Lemma 2.11 (3) ]{KKSY} and 
 the caption of Figure 1 in \cite{CR} without proof.

\begin{proof}
 We give a proof here as an application of the results in
 the previous section.
 A subset of the space-like hyperbolic catenoid $\Cat_+$
 can be parametrized by 
 \begin{align}\label{eq:s-cat-par}
   \varphi_1(u,v)&=(\cosh u\sin v, v, \sinh u \sin v)
                 =-\Re i(\sinh z, z, \cosh z),\\
   \psi_1(u,v)&=(-\cosh u\sin v, v, -\sinh u \sin v),\nonumber
 \end{align}
 where $z=u+iv$. 
 In fact, $\Cat_+$
 is the union of the closure of the images of $\varphi_1$ and $\psi_1$.
 The surface $\varphi_1$ has generalized conical singularities 
 at $(u,n \pi)$ for any $u \in \R$ and $n \in \Z$,
 as pointed out in  \cite{CR}.
 Using this, one can easily compute that
 the conjugate of $\varphi_1$ is 
 congruent to the following surface
 \begin{align}\label{eq:s-cat-dual-par}
    \varphi_1^*(u,v):&= -\Im i(\sinh z, z, \cosh z)
                = -\Re(\sinh z , z , \cosh z)\\
                &= -(\sinh u\cos v, u,\cosh u\cos v).\nonumber
 \end{align}
 By Proposition \ref{prop:dual},
 the conjugate surface has non-degenerate fold singularities.
 Then by Theorem \ref{thm:main}
 one can get an analytic continuation of
 $\varphi_1^*$ as a zero mean curvature surface in
 $\R^3_1$ which changes type across the  fold singularities.
 We can get an explicit description of
 such an extension of $\phi_1^*$ as follows:
 We set
 \[
    (t,x,y)=\varphi_1^*(u,v)=-(\sinh u\cos v, u,\cosh u\cos v).
 \]
 Then the surface has fold singularities 
 at $(u,n \pi)$ for any $u \in \R$ and $n \in \Z$.
 Then it holds that 
 \[
    \frac{t}{y}=\tanh u = -\tanh x
 \]
 and the image of $-\varphi_1^*$ is contained in the surface $\Cat_0$.

 On the other hand, a subset of the time-like 
 hyperbolic catenoid $\Cat_-$ has a parametrization 
 \begin{equation}\label{eq:t-cat-par}
    \varphi_2(u,v):=\frac{1}{2}(\sinh u+\sinh v,u+v,\cosh u-\cosh v) 
         = \frac{\alpha(u)+\beta(v)}{2},
 \end{equation}
 where
 \begin{equation}\label{eq:alpha-beta}
   \alpha(u):=(\sinh u, u, \cosh u), \qquad
   \beta(v):= (\sinh v, v, -\cosh v).
 \end{equation}
 Also
 \begin{equation}\label{eq:t-cat-par2}
    \psi_2(u,v):=\frac{1}{2}(-\sinh u-\sinh v,u+v,-\cosh u+\cosh v) 
 \end{equation}
 gives a parametrization of $\Cat_-$.
 More precisely, $\Cat_-$ is 
 the union of the closure of the images of $\varphi_2$ and 
 $\psi_2$. We get
 the following parametrization of
 the conjugate surface $\varphi_2^*$ of $\varphi_2$
 \begin{equation}\label{eq:t-cat-dual-par}
  \varphi_2^*(u,v):= \frac{1}{2}\bigl(\alpha(u)-\beta(v)\bigr)
        = \frac{1}{2}(\sinh u-\sinh v, u-v,\cosh u+\cosh v),
 \end{equation}
 where $\alpha$ and $\beta$ are as in \eqref{eq:alpha-beta}.
 (See \cite{IT} for the definition
 of the conjugate surfaces of time-like minimal surfaces.)
 To find the implicit function representation of the image of
 $\varphi_2^*$,
 take a new coordinate system $(\xi,\zeta)$ as
 \[
    u=\xi+\zeta, \qquad v=\xi-\zeta.
 \]
 Then
 \[
    \varphi_2^*(\xi,\zeta)=
      (\cosh\xi\sinh\zeta,\zeta,\cosh\xi\cosh\zeta),
 \]
 which implies that
 the image is a subset of $\Cat_0$.
 The entire graph $\Cat_0$ changes type across 
 two disjoint real analytic  null curves $\{y=\pm\cosh x\}$.
\end{proof}

Next, we prove a similar assertion for
the Scherk surfaces, which is also briefly mentioned
in the caption of Figure 1 in \cite{CR}:

\begin{theorem}
 The conjugate space-like maximal surface of
 the space-like Scherk surface $\Sch_+$
 and the 
 conjugate time-like minimal surface of
 the time-like Scherk surface $\Sch_-$
 of the first kind
 are both congruent to subsets
 of the entire graph $\Sch_0$.
\end{theorem}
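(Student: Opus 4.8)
The plan is to argue in parallel with the preceding proof for the hyperbolic catenoids, treating the space-like surface $\Sch_+$ and the time-like surface $\Sch_-$ separately; in each case I would realize the surface by an explicit parametrization, pass to the conjugate surface, and recognize the result as a subset of $\Sch_0$.

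\emph{The space-like case.} First I would produce a Weierstrass representation $\varphi=\Re\Phi$ of a fundamental piece of $\Sch_+$ as a maxface, with $\Phi$ a holomorphic null immersion of ``Scherk type'' --- the Lorentzian analogue of the Weierstrass representation of the Euclidean doubly periodic Scherk surface --- normalized so that its real part satisfies $\cos\varphi_0=\cos\varphi_1\cos\varphi_2$. As pointed out in \cite{CR}, the light-like lines contained in $\Sch_+$ appear in the closure of the image of $\varphi$ as generalized conical singularities, exactly as the light-like lines of $\Cat_+$ do for $\varphi_1$ in \eqref{eq:s-cat-par}. Writing $z=u+iv$ for the induced complex coordinate, the conjugate maxface is $\varphi^{*}=\Im\Phi$. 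By Proposition~\ref{prop:dual} (applied to the maxface $\varphi^{*}$, whose conjugate is $-\varphi$), $\varphi^{*}$ has non-degenerate fold singularities along the curves corresponding to the light-like lines of $\Sch_+$, so Theorem~\ref{thm:main} produces a real analytic zero mean curvature immersion extending $\varphi^{*}$ across these curves and changing causal type there. Computing $\varphi^{*}(u,v)$ in closed form and simplifying with elementary identities for $\exp$, $\cosh$ and $\sinh$, I would then check that its image satisfies $e^{t}\cosh x=\cosh y$ after a suitable isometry of $\R^3_1$, that is, is congruent to a subset of $\Sch_0$. As a consistency check --- and indeed an alternative route --- conjugation is an involution up to sign, and the space-like part $\{t=\log\cosh y-\log\cosh x,\ \tanh^{2}x+\tanh^{2}y<1\}$ of $\Sch_0$ satisfies $B=1-\tanh^{2}x-\tanh^{2}y>0$, hence is genuinely a maxface whose conjugate must return $\Sch_+$; this also pins down the required isometry.

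\emph{The time-like case.} Since $\Sch_-$ is a time-like minimal surface, it admits a parametrization in null coordinates of the form $\varphi(u,v)=\frac{1}{2}\bigl(a(u)+b(v)\bigr)$ for two real analytic null curves $a,b$ in $\R^3_1$, just as $\Cat_-$ does in \eqref{eq:t-cat-par}--\eqref{eq:alpha-beta}. I would exhibit such $a$ and $b$ explicitly so that the image satisfies $\cosh\varphi_0=\cosh\varphi_1\cosh\varphi_2$; the four light-like lines contained in $\Sch_-$ lie in the null directions $(1,\pm1,0)$ and $(1,0,\pm1)$ (the same directions as those of the light-like lines of $\Sch_+$), and these guide the choice of $a,b$. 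The conjugate time-like minimal surface is then $\varphi^{*}(u,v)=\frac{1}{2}\bigl(a(u)-b(v)\bigr)$ (cf.\ \cite{IT}), and the linear change of coordinates $u=\xi+\zeta$, $v=\xi-\zeta$ --- the same device that worked for $\Cat_-$ --- brings $\varphi^{*}$ into a form from which the relation $e^{t}\cosh x=\cosh y$ is immediate, so its image is congruent to a subset of $\Sch_0$. Finally I would record that the entire graph $\Sch_0=\{t=\log\cosh y-\log\cosh x\}$ changes causal type across four non-degenerate null curves, namely those lying over the locus $\{\tanh^{2}x+\tanh^{2}y=1\}$ in the $xy$-plane (four disjoint arcs); under conjugation these match the four light-like lines of $\Sch_-$ and the four families of light-like lines of $\Sch_+$. (The closed forms of $\Sch_\pm$ and $\Sch_0$ also appear among the families in \cite[Theorem 2]{ST}, which provides an independent check.)

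The main obstacle is the space-like half: producing the correct explicit maxface parametrization of $\Sch_+$ --- or, equivalently via the involutivity of conjugation, an explicit conformal parametrization of the space-like part of $\Sch_0$ --- and then carrying the conjugation through so as to recognize the relation $e^{t}\cosh x=\cosh y$. Once the parametrization is in hand the verification is a bounded computation with hyperbolic and exponential identities, but tracking the rigid motion of $\R^3_1$ needed for ``congruent to'' requires care. The time-like half is comparatively routine once the two null curves $a,b$ are chosen, since the substitution $u=\xi+\zeta$, $v=\xi-\zeta$ linearizes the bookkeeping, exactly as in the proof for $\Cat_-$.
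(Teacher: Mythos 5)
Your outline follows exactly the route the paper takes---explicit parametrization, conjugation, identification of the fold locus via Proposition~\ref{prop:dual}, analytic extension via Theorem~\ref{thm:main}, and the substitution $u=\xi+\zeta$, $v=\xi-\zeta$ on the time-like side---and the surrounding claims you make are correct (the four light-like lines of $\Sch_-$ in the directions $(1,\pm1,0)$, $(1,0,\pm1)$; the space-like part of $\Sch_0$ being the graph over $\{\tanh^2x+\tanh^2y<1\}$; the four arcs of the type-change locus). But the proposal stops precisely where the proof begins: you never produce the holomorphic null lift of $\Sch_+$, nor the null curves $a,b$ for $\Sch_-$, and you yourself flag this as ``the main obstacle.'' That is the entire mathematical content of the statement. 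The paper's proof consists of exhibiting
\[
   \Phi(z)=-i\left(\log\frac{1+z^2}{1-z^2},\ \log\frac{1-z}{1+z},\ \log\frac{1-iz}{1+iz}\right)+\tfrac{\pi}{2}(1,1,1),
\]
checking that $\Re\Phi$ parametrizes a piece of $\Sch_+$ (via $\cos\arg w=\Re(w/|w|)$), reading off the conjugate $\Im\,i(\cdots)=(\log|{\cdot}|,\log|{\cdot}|,\log|{\cdot}|)$, parametrizing the fold locus $\{z=e^{iu}\}$ to get the explicit null curve $\gamma(u)$ of \eqref{eq:null-curve}, and then verifying with hyperbolic identities (e.g.\ $\cosh t=\sin(u+v)/\sqrt{\sin 2u\sin 2v}$) that $\tfrac12(\gamma(u)-\gamma(v))$ lies in $\Sch_-$ while $\tfrac12(\gamma(u)+\gamma(v))$ lies in $\Sch_0$. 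Without these formulas nothing is actually verified, so the proposal is a plan rather than a proof.

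Two smaller points. First, your ``alternative route'' via involutivity of conjugation---start from the space-like part of $\Sch_0$ and conjugate back to $\Sch_+$---is not a shortcut: it requires a conformal parametrization of the space-like part of $\Sch_0$, which is the same missing computation in reverse, and as stated it presupposes the very congruence to be proved. Second, a precision issue: for both $\Cat_+$ and $\Sch_+$ the generalized cone-like singularities are not ``the light-like lines appearing in the image''; the singular curve of the parametrization collapses to the isolated points where the light-like lines cross (e.g.\ $\varphi_1(u,n\pi)=(0,n\pi,0)$ for $\Cat_+$), and the light-like lines themselves lie only in the closure of the union of the images of the two parametrized pieces. This does not affect the strategy, but it matters when you invoke Proposition~\ref{prop:dual} to conclude that the conjugate has non-degenerate folds along specific curves.
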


\begin{proof}
 Using the identities
 \[
  \cos \arg z=\Re(\frac{z}{|z|}),\qquad \sin \arg z=\Im(\frac{z}{|z|}),
 \]
 one can prove that
 a subset of the space-like Scherk surface $\Sch_+$ 
 is parametrized by the complex variable $z$ as 
 \begin{align}\label{eq:sp-sch-par}
  \varphi_1(z)&=-\Re i\left(
		\log\frac{1+z^2}{1-z^2},\,\,
		\log\frac{1-z}{1+z},\,\,
		\log\frac{1-iz}{1+iz}
	       \right)+\frac{\pi}{2}(1,1,1)\\
          &=\left(\arg\frac{1+z^2}{1-z^2},\,\,
                  \arg\frac{1-z}{1+z},\,\,
                  \arg\frac{1-iz}{1+iz}\right)
           +\frac{\pi}{2}(1,1,1).\nonumber
 \end{align}
 In fact, $\varphi_1(z)$ is a multi-valued $\R^3_1$-valued function,
 but can be considered as a single-valued function
 on the universal covering of $\C\cup\{\infty\}\setminus \{\pm 1,\pm i\}$.
 We now set
 \[
   \varphi_1(z)=\bigl(t(z),x(z),y(z)\bigr)
 \]
 and
 \[
   \psi_1(z):=\bigl(\pi-t(z),x(z),\pi-y(z)\bigr).
 \]
 Then $\Sch_+$
 is the union of the closure of the images of $\varphi_1$ and 
 $\psi_1$.
 The conjugate $\psi_1^*$ of the space-like Scherk 
 surface $\psi_1$  is obtained by
 \begin{align}\label{eq:sp-sch-dual-par}
  \psi_1^*(z)&=
             \Im i\left(
		\log\frac{1+z^2}{1-z^2},\,\,
		\log\frac{1-z}{1+z},\,\,
		\log\frac{1-iz}{1+iz}
	       \right)\\
          &=\left(\log\left|\frac{1+z^2}{1-z^2}\right|,\,\,
                  \log\left|\frac{1-z}{1+z}\right|,\,\,
                  \log\left|\frac{1-iz}{1+iz}\right|
            \right)
            .\nonumber
 \end{align}
 Since $\psi_1$ admits only generalized cone-like singularities
 (cf.\ Proposition \ref{prop:dual}),
 $\psi_1^*$ admits only fold singularities, and has
 a real analytical extension across  the 
 fold singularities to a
 time-like minimal surface in $\R^3_1$ (cf.\ Theorem \ref{thm:main}).
 More precisely,
 the image of the conjugate $\psi_1^*$ is contained in the graph
 $\Sch_0$, shown as follows:
 The singular sets of $\psi_1$ and $\psi_1^*$ are both parametrized
 as $\{z=e^{iu}\}$.
 Then the image of a connected component of singular curve by $\psi_1^*$
 as in 
 \eqref{eq:sp-sch-dual-par} is parametrized as 
 \begin{equation}\label{eq:null-curve}
  \gamma(u) = \frac{1}{2}\left(2\log\cot u,\,\,
                  \log \frac{1-\cos u}{1+\cos u},\,\,
		  \log \frac{1+\sin u}{1-\sin u}
			 \right)
  \qquad \left(0<u<\frac{\pi}2\right).
 \end{equation}
 By the singular Bj\"orling formula \eqref{eq:Bj}
 in Section~\ref{sec:fundamental}, 
 we have the following analytic extension of $\gamma$
 \begin{equation}\label{eq:t-sch-dual-par}
  \hat\psi_2(u,v):=\frac{\gamma(u)+\gamma(v)}{2}.
 \end{equation}
 Now, we check that the conjugate 
 $\psi_2:=\hat\psi_2^*$ of 
 $\hat \psi_2$ as in  \eqref{eq:t-sch-dual-par} coincides with 
 the time-like Scherk surface $\Sch_-$.
 By \eqref{eq:t-sch-dual-par}, the conjugate $\psi_2$ of $\hat \psi_2$ is
 parametrized by
 (see \cite{IT} for the definition
 of the conjugate surfaces of time-like minimal surfaces)
 \begin{align}\label{eq:t-sch-par}
   \psi_2&(u,v)=\frac{1}{2}\bigl(\gamma(u)-\gamma(v)\bigr)\\
     &= \left(
         \frac{1}{2}\bigl(
	 \log(\cot u)-\log(\cot v)\bigr),
	 \frac{1}{4}
         \left(\log\frac{1-\cos u}{1+\cos u}-
       \log\frac{1-\cos v}{1+\cos v}\right),\right.\nonumber\\
     & \left.\hphantom{=\left(
         \frac{1}{2}\bigl(
	 \log(\cot u)+\log(\cot v)\bigr)\right.,}
	 \frac{1}{4}
         \left(\log\frac{1+\sin u}{1-\sin u}-
              \log\frac{1+\sin v}{1-\sin v}\right)
       \right)\nonumber.
 \end{align}
 We set $(t,x,y)=\psi_2(u,v)$, and will show that
 $(t,x,y)$ lies in $\Sch_-$:
 In fact, by \eqref{eq:t-sch-par}, we have
 \[
  e^{2t}=\frac{\cot u}{\cot v}=\frac{\cos u\sin v}{\sin u \cos v},
 \]
 which implies
 \[
 \cosh t =
         \frac{1}{2}\left(
         \sqrt{\frac{\cos u\sin v}{\sin u \cos v}}+
         \sqrt{\frac{\sin u\cos v}{\cos u \sin v}}\right)
    = \frac{\sin(u+v)}{\sqrt{\sin 2 u \sin 2 v}}.
 \]
 Using
 \[
    e^{4x}=\frac{1-\cos u}{1+\cos u}\times 
           \frac{1+\cos v}{1-\cos v}
           =\left(\tan\frac{u}{2}\cot\frac{v}{2}\right)^2,
 \]
 we have that
 \[
   \cosh x =
        \frac{1}{\sqrt{\sin u\sin v}}\sin\frac{u+v}{2}.
 \]
 Similarly, 
 \[
     \cosh y=
           \frac{1}{\sqrt{\cos u\cos v}}\cos\frac{u+v}{2}
 \]
 holds. 
 Hence the analytic extension of the conjugate of 
 $\Sch_+$ coincides with $\Sch_0$.
 As pointed out in \cite{KKSY},
 the space-like part of $\Sch_0$ is connected,
 and the time-like part of $\Sch_0$
 consists of four connected components, 
 each of which is congruent to the image of $\psi_1$
 (see Figure~\ref{fig1c}, right). 
\end{proof}

\medskip
In the introduction, we saw that the conjugate
surfaces of $\Cat_+$ and $\Cat_-$
(resp.\ $\Sch_+$ and $\Sch_-$) 
are both subsets of the same zero mean curvature surface 
$\Cat_0$ (resp.\ $\Sch_0$).
As was pointed out in \cite{KKSY}, 
a similar phenomenon also holds 
between elliptic catenoids and parabolic catenoids:
The helicoid 
$x\sin t=y \cos t$
is well known as a ruled minimal surface in 
the Euclidean 3-space $\R^3$, and also gives a zero-mean curvature
in $\R^3_1$.
The {\it space-like elliptic catenoid\/}
\[
   \phi^E_+(u,v):=
     \left(v, \cos u \sinh v,\sin u \sinh v \right)
\]
and the {\it time-like elliptic catenoid\/}
\[
   \phi^E_-(u,v):=
        \left(v, \cosh u \sinh v,\sinh u \sinh v
        \right)
\]
induce their conjugate surfaces, both of which 
are subsets of the helicoids.
\begin{figure}[t]%
 \begin{center}
  \begin{tabular}{c@{\hspace{5em}}c}
       \includegraphics[width=4.2cm]{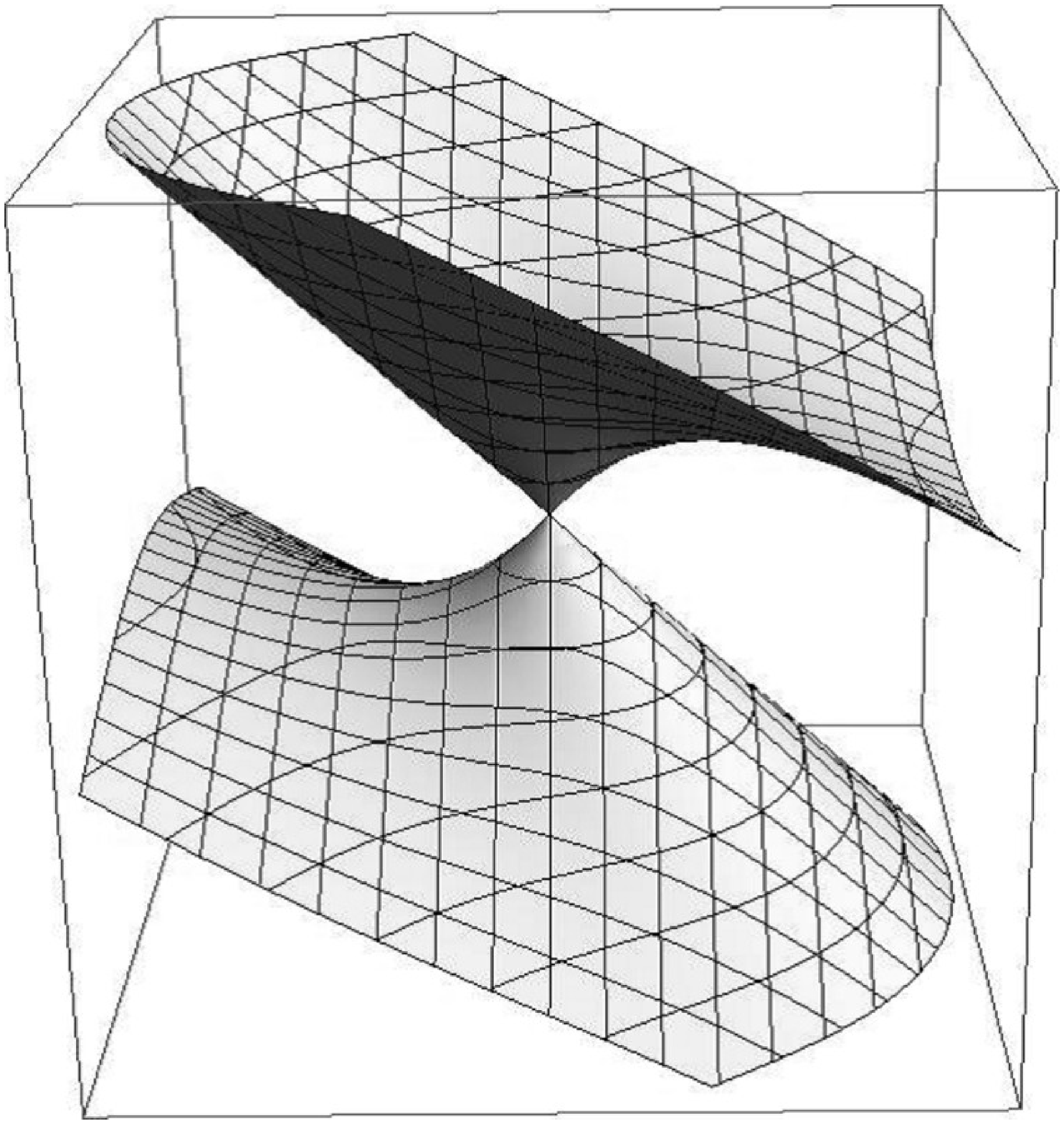} & 
       \includegraphics[width=4.5cm]{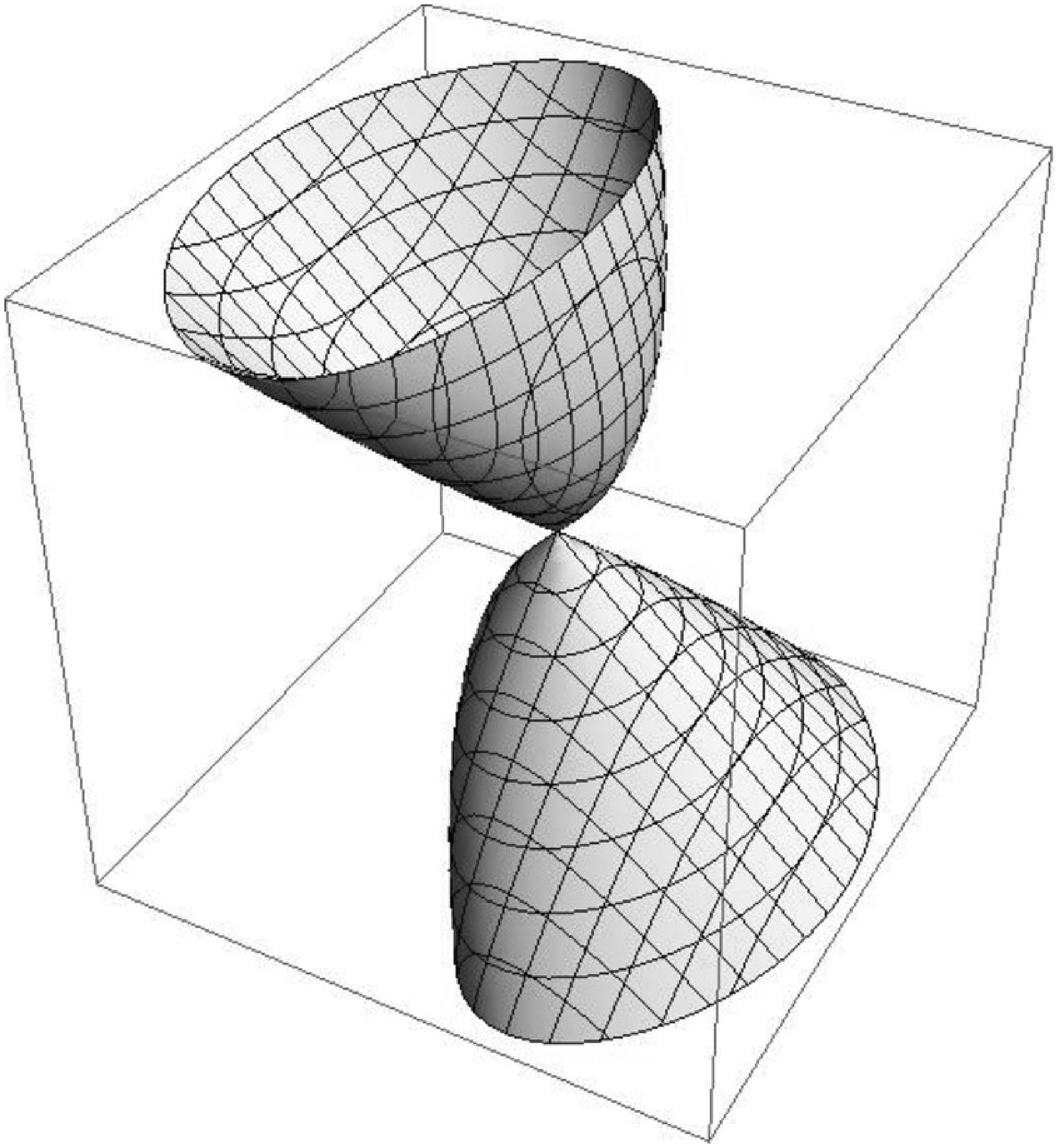} 
  \end{tabular}
 \end{center}
 \caption{%
  The completions of parabolic catenoids 
  $\phi^P_+$ (left) and $\phi^P_-$ (right).
 }%
\label{fig:pc}
\end{figure}

The {\it space-like parabolic catenoid},
on the other hand,
\[
   \phi^P_+(u,v):=
       \left(v-\frac{v^3}3+u^2v, v+\frac{v^3}3-u^2v,2uv
       \right)
\]
is given in Kobayashi \cite{K} as an Enneper surface of 2nd kind.
Consider the ruled zero-mean curvature
surface, which we call the {\it parabolic helicoid}
\[
    \phi^P_0(u,v):=\gamma(u)+v(u,-u,1),\qquad
    \left(
      \gamma(u):=\left(-u-\frac{u^3}3,-u+\frac{u^3}3,-u^2\right)
    \right),
\]
where $\gamma(u)$ is a non-degenerate null curve
at which the surface changes type. 
If $v>0$, $\phi^P_0$ gives the conjugate surface
of $\phi^P_+$.
If $v<0$, $\phi^P_0$ gives the conjugate surface
of the {\it time-like parabolic catenoid}%
\footnote{%
   In \cite[Examples 2.8 and 2.9]{KKSY}, these 
   are called spacelike (timelike) parabolic helicoid.
}%
given by
\[
  \phi^P_-(u,v):=
      \left(-u-\frac{u^3}3-uv^2,-u+\frac{u^3}3+uv^2,-u^2-v^2
      \right).
\]
The image of the parabolic catenoid
$\phi^P_+(u,v)$ (resp. $\phi^P_-(u,v)$)
is a subset of (cf. Figure \ref{fig:pc})
\begin{equation}\label{eq:p2}
 12 (x^2 - t^2) = (x + t)^4 - 12 y^2
  \qquad \biggl (
  \mbox{resp.\ }\,\, 12 (x^2 - t^2) = -(x + t)^4 - 12 y^2
  \biggr).
\end{equation}
We call it {\it the completion of the space-like} 
({\it resp.\ time-like}) {\it parabolic catenoid},
which contains a light-like line
\[
   L:=\{y=x+t=0\}.
\]
The initial parametrization $\phi^P_\pm$ does not
include this line $L$.
Kobayashi \cite[Example 2.3]{K} noticed the line $L$ 
in the surface and drew a 
hand-drawn figure of it 
that coincides with the left-hand 
side of Figure \ref{fig:pc}.

In \cite{CR}, zero mean curvature
surfaces containing singular light-like lines
are categorized into the following six classes 
\begin{equation}\label{eq:six}
 \alpha^+,\quad\alpha^0_I,\quad
  \alpha^0_{\II},\quad
  \alpha^-_I,
  \quad\alpha^-_{\II},\quad
  \alpha^-_{\III}.
\end{equation}
The surfaces belonging to $\alpha^+$ 
(resp.\ $\alpha^-_I$, $\alpha^-_{\II}$, $\alpha^-_{\III}$)
are space-like (resp.\ time-like).
On the other hand, the causalities of surfaces in
$\alpha^0_I$ and $\alpha^0_{\II}$  are not unique. 
In fact, the hyperbolic catenoids $\Cat_{\pm}$ are the
example of surfaces of type $\alpha^0_I$.
In \cite{CR}, the authors pointed out that the light cone
$\Lambda^2$
is a surface of type $\alpha^0_{\II}$. 
Since $\Lambda^2$ is neither 
space-like nor time-like, the existence problem 
of space-like (resp.\ time-like)
surfaces for this class $\alpha^0_{\II}$
was left unanswered.  Fortunately, the above two examples 
resolve this question, that is, the following assertion holds:

\begin{theorem}
 The completion of space-like {\rm(}resp.\ time-like{\rm)}
 parabolic catenoids gives an example of
 surfaces of type $\alpha^0_{\II}$ at each point 
 $(-c,c,0)$ $(c\ne 0)$
 on the light-like line $L$. 
 In other words, both space-like surfaces
 and time-like surfaces exist
 in the class $\alpha^0_{\II}$
 of zero mean curvature surfaces containing 
 light-like lines. 
\end{theorem}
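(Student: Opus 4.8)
The plan is to localize \eqref{eq:p2} near a point $(-c,c,0)$ with $c\neq 0$, exhibit it there as a real analytic graph, and compare that graph with the light cone $\Lambda^2=\{-t^2+x^2+y^2=0\}$, invoking the classification of \cite{CR} in which $\Lambda^2$ is of type $\alpha^0_{\II}$ while the hyperbolic catenoids $\Cat_\pm$ are of type $\alpha^0_I$. First I would pass to the null coordinates $\xi=x+t$, $\eta=x-t$, in which the Lorentz--Minkowski metric is $d\xi\,d\eta+dy^2$, the line $L=\{y=x+t=0\}$ becomes the $\eta$-axis $\{\xi=y=0\}$, and the point $(-c,c,0)$ becomes $(\xi,\eta,y)=(0,2c,0)$. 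Both $\Lambda^2$ and the completions \eqref{eq:p2} contain $L$; since $x^2-t^2=\xi\eta$, the latter become $\xi\eta+y^2=\pm\tfrac1{12}\xi^4$ (upper sign for the space-like catenoid, lower for the time-like one), and $\partial_\xi\bigl(\xi\eta\mp\tfrac1{12}\xi^4\bigr)=\eta\mp\tfrac13\xi^3$, which is $2c\neq 0$ at the point in question, so the implicit function theorem presents the surface near $(0,2c,0)$ as a graph $\xi=F(\eta,y)$ with $F(\cdot,0)\equiv 0$; the light cone is there the graph $\xi=-y^2/\eta$.

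The key step is the Taylor expansion of $F$ along $L$. Since the defining equation involves $y$ only through $y^2$, $F$ is even in $y$; writing $F=\sum_{k\ge 1}a_k(\eta)y^{2k}$ and substituting into $\xi\eta+y^2=\pm\tfrac1{12}\xi^4$, the quartic term first contributes at order $y^8$, so $a_1=-1/\eta$, $a_2=a_3=0$, and $a_4=\pm 1/(12\eta^5)$, giving
\[
  F(\eta,y)=-\frac{y^2}{\eta}\pm\frac{y^8}{12\,\eta^5}+O(y^{10}).
\]
Hence $F$ agrees with the light cone's graph $\xi=-y^2/\eta$ through order $7$ in $y$, the first discrepancy occurring at order $8$; and the induced metric $F_\eta\,d\eta^2+F_y\,d\eta\,dy+dy^2$ has discriminant
\[
  F_\eta-\tfrac14 F_y^2=\pm\frac{y^8}{4\,\eta^6}+O(y^{10}),
\]
which is positive (resp.\ negative) for $0<|y|\ll 1$. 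Thus the completion of the space-like parabolic catenoid is genuinely space-like off $L$, that of the time-like one is genuinely time-like off $L$, and both degenerate precisely along $L$, to order $8$. This is to be contrasted with $\Cat_\pm$: written analogously near their light-like lines they deviate from the quadric model $\xi\eta+y^2=0$ already at order $4$, coming from $\sin^2 x=x^2-\tfrac13x^4+\cdots$ (resp.\ $\sinh^2 x=x^2+\tfrac13x^4+\cdots$), and it is exactly this order-$4$ deviation that places them in $\alpha^0_I$.

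From here the type is read off: the germ of \eqref{eq:p2} along $L$ agrees with that of $\Lambda^2$ through order $7$, so, $\Lambda^2$ being of type $\alpha^0_{\II}$, the completions of the parabolic catenoids are of type $\alpha^0_{\II}$ at every point $(-c,c,0)$ with $c\neq 0$. (The point $c=0$ is legitimately excluded: $(0,0,0)$ is a conical singular point of \eqref{eq:p2} — its quadratic part there is $12(-t^2+x^2+y^2)$ — so the surface is not regular at the origin and the local picture is different.) Since one of the two completions is space-like off $L$ and the other time-like off $L$, this exhibits both a space-like and a time-like surface in $\alpha^0_{\II}$, answering the existence question left open in \cite{CR}. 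I expect the one real obstacle to be the bookkeeping needed to bring \eqref{eq:p2} into the adapted moving frame and normal form of \cite{CR} near a point of $L$, and thereby to confirm that membership in $\alpha^0_{\II}$ is governed only by the low-order coefficients $a_1,a_2,a_3$ (equivalently, is insensitive to the order-$8$ jet of $F$); granting that normalization, the rest is the elementary computation above.
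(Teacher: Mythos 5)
Your computations are correct, and your overall strategy---present \eqref{eq:p2} near $(-c,c,0)$ as a graph via the implicit function theorem and read the type off the transverse Taylor expansion along $L$---is the same as the paper's. The paper works with the graph $t=t_\pm(x,y)$ over the $xy$-plane and implicit differentiation of $F_\pm=(x+t)\{12(x-t)\mp(x+t)^3\}+12y^2$, while you work in null coordinates; that difference is cosmetic. Your expansion $F=-y^2/\eta\pm y^8/(12\eta^5)+O(y^{10})$ and the discriminant $\pm y^8/(4\eta^6)$ check out, and the latter (verifying that the two completions really are space-like, resp.\ time-like, off $L$) is a worthwhile addition that the paper simply takes for granted from the parametrizations $\phi^P_\pm$. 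Your observation about the conical point at the origin is also correct.

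The gap is exactly the one you flag at the end, and it is not a formality: the inference ``the germ agrees with that of $\Lambda^2$ through order $7$, $\Lambda^2$ is of type $\alpha^0_{\II}$, hence so is this surface'' is valid only if membership in $\alpha^0_{\II}$ is determined by a sufficiently low jet of the surface along $L$, and you never state, let alone apply, the defining criterion of the class from \cite{CR}. The same objection applies to your parenthetical claim that the order-$4$ deviation of $\Cat_\pm$ from the quadric model is ``exactly'' what places them in $\alpha^0_I$; that is asserted, not derived. The paper closes the loop by quoting the normal form of \cite{CR}: near a point of the light-like line the graph is $t_\pm(c,y)=-c-\tfrac{\alpha(c)}{2}y^2+\beta(c,y)y^3$, and the type at $(c,0)$ is read off from the coefficient function $\alpha$; the entire proof then reduces to two implicit differentiations giving $t_y(c,0)=0$ and $\alpha(c)=2/c$, which identifies the class as $\alpha^0_{\II}$. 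In your coordinates this datum is just $a_1(\eta)=-1/\eta$, so you have already computed everything the criterion needs (and considerably more); what is missing is the citation and application of that criterion in place of the jet-comparison heuristic. As written, the one step that actually establishes membership in $\alpha^0_{\II}$ is deferred to an ``expectation,'' so the proof is incomplete until that normalization from \cite{CR} is supplied.
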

\begin{proof}
 We set
 \[
    F_\pm:=(x+t)\{12(x-t)\mp (x+t)^3\}+12 y^2.
 \]
 Then \eqref{eq:p2} is rewritten as $F_\pm=0$.
 We fix a point $(-c,c,0)$ ($c\ne 0$)
 on the set $\{F_\pm=0\}$.
 Since
 \[
    \frac{\partial F}{\partial t}(-c,c,0)
       =\left. -24 t\mp 4 (x+t)^3\right|_{(t,x)=(-c,c)}=24c(\ne 0),
 \]
 the implicit function theorem yields that
 there exists a $C^\infty$-function $t=t_\pm(x,y)$
 such that the set $\{F_\pm=0\}$ is parametrized
 by the graph $t=t_\pm(x,y)$ around the point
 $(-c,c,0)$. By \cite{CR}, we know that
 $t=t_\pm(x,y)$ has the following expression
 \[
    t_\pm(c,y)=-c-\frac{\alpha(c)}2 y^2+\beta(c,y)y^3,
 \]
 where $\alpha=\alpha(x)$ and $\beta=\beta(x,y)$
 are $C^\infty$-functions.
 Differentiating the equation
 $F_{\pm}(t_\pm(x,y),x,y)=0$
 with respect to $y$, 
 we have that
 \begin{equation}\label{eq:y}
   -24 t(x,y) t_y(x,y)-4 (x+t(x,y))^3 t_y(x,y)+24 y=0,
 \end{equation}
 where $t_y=\partial t/\partial y$.
 Substituting $(t,x,y)=(-c,c,0)$, we get
 \[
    -ct_y(c,0)=t(c,0)t_y(c,0)=0,
 \]
 which implies that 
 \begin{equation}\label{eq:y2}
    t_y(c,0)=0.
 \end{equation}
 Differentiating \eqref{eq:y}
 with respect to $y$ again, we have
 \[
   -24 t_y^2-24 t \, t_{yy}\mp
     12 (x+t)^2 t_y^2\mp 4(x+t)^3 t_{yy}+24=0.
 \]
 Substituting $(t,x,y)=(-c,c,0)$ 
 and \eqref{eq:y2}, we get
 \[
    -24 c t_{yy}+24=0,
 \]
 namely $\alpha(c)=2/c$. This implies that
 $t=t_\pm(x,y)$ is of type $\alpha^0_{\II}$
 at $(c,0)$.
\end{proof}

In the authors' previous work \cite{Null}, 
surfaces of type $\alpha^0_I$
changing type across a light-like line
have been constructed.
The only other possibility for
the existence of 
surfaces 
changing type across a light-like line
must be of type $\alpha^0_{\II}$
(cf.\ \cite{CR}).
So the following question is of interest:

\begin{problem}
 Do there exist zero-mean curvature surfaces of type $\alpha^0_{\II}$
 which change type across a light-like line?
\end{problem}

\section{A relationship to fluid mechanics}
\label{sec:fluid}
As mentioned in the introduction,
we give an application of Theorem~\ref{thm:main2}
to fluid mechanics:
Consider a two-dimensional flow on the $xy$-plane
with velocity vector field
$\vect{v}=(u,v)$, and with density $\rho$ and pressure $p$.
We assume the following:
\begingroup
\renewcommand{\theenumi}{(\roman{enumi})}
\renewcommand{\labelenumi}{(\roman{enumi})}
\begin{enumerate}
 \item\label{item:ass-1}
      The fluid is {\em barotropic}, that is,
      there exists a strictly increasing function
      $p(s)$ ($s>0$) 
      such that the pressure $p$ is expressed by
      $p=p(\rho)$.
      A positive function $c$ defined by 
      \begin{equation}\label{eq:sound-speed}
          c^2 = \frac{dp}{d\rho}=p'(\rho)
      \end{equation}
      is called the {\em local speed of sound}, 
      cf.\ \cite[pages 5--6]{B}.
 \item\label{item:ass-2}
      The flow is steady, that is,
      $\vect{v}$, $p$ and $\rho$ do not depend on 
      time.
 \item\label{item:ass-3}
      There are no external forces,
 \item\label{item:ass-4}
      and the flow is irrotational,
      that is, $\rot\vect{v}(=v_x-u_y)=0$.
\end{enumerate}
\endgroup
By the assumption \ref{item:ass-2}, the equation of continuity
is reduced to
\[
   \div(\rho\vect{v})=(\rho u)_x + (\rho v)_y=0.
\]
Hence there exists locally a smooth function $\psi=\psi(x,y)$
such that
\begin{equation}\label{eq:stream}
  \psi_x = -\rho v,\qquad \psi_y=\rho u,
\end{equation}
which is called the {\em stream function\/} of the flow.
The following assertion is well-known:
\begin{fact}\label{prop:eq}
 The stream function $\psi$ of a two-dimensional 
 flow under the conditions \ref{item:ass-1}--\ref{item:ass-4}
 satisfies
 \begin{equation}\label{eq:stream-eq}
    (\rho^2c^2-\psi_y^2)\psi_{xx}+
    2\psi_x\psi_y\psi_{xy}+
    (\rho^2c^2-\psi_x^2)\psi_{yy}=0.
 \end{equation}
\end{fact}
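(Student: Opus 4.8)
The plan is to derive \eqref{eq:stream-eq} from the Euler equations of motion, the barotropy relation, and the definition of the stream function. By assumptions \ref{item:ass-2} and \ref{item:ass-3} the equations of motion read $\rho(\vect{v}\cdot\nabla)\vect{v}=-\nabla p$, and by \ref{item:ass-1} together with \eqref{eq:sound-speed} we have $\nabla p=c^2\nabla\rho$. Writing the two components explicitly and using the irrotationality condition \ref{item:ass-4} in the form $u_y=v_x$ to recognize $uu_x+vu_y=\tfrac12(u^2+v^2)_x$ and $uv_x+vv_y=\tfrac12(u^2+v^2)_y$, the two scalar equations collapse into the single vector identity
\[
  \nabla\rho=-\frac{\rho}{2c^2}\,\nabla q^2,\qquad q^2:=u^2+v^2,
\]
which is the differential form of Bernoulli's law. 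This is the one nontrivial input; the rest is bookkeeping.

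Next I would pass to the stream function. By \eqref{eq:stream}, $u=\psi_y/\rho$ and $v=-\psi_x/\rho$, so $q^2=(\psi_x^2+\psi_y^2)/\rho^2$ and, importantly, $\rho^2q^2=\psi_x^2+\psi_y^2$. Substituting $q^2=(\psi_x^2+\psi_y^2)/\rho^2$ into the Bernoulli identity, expanding $\nabla q^2$, and using the identity once more to eliminate $\nabla\rho$ from the right-hand side, one is left with a linear equation for $\nabla\rho$ whose solution is
\[
  \nabla\rho=-\frac{\nabla(\psi_x^2+\psi_y^2)}{2\rho\,(c^2-q^2)};
\]
in particular $\rho_x$ and $\rho_y$ are now expressed entirely through the first and second derivatives of $\psi$ (and through $\rho$ and $c$).

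Finally, the continuity equation is satisfied identically by the very definition of $\psi$, so the only remaining equation of motion is the irrotationality condition $v_x-u_y=0$. Substituting $u=\psi_y/\rho$, $v=-\psi_x/\rho$ and clearing $\rho^2$ turns it into $\rho(\psi_{xx}+\psi_{yy})=\psi_x\rho_x+\psi_y\rho_y$. Inserting the formulas for $\rho_x,\rho_y$ from the previous step, multiplying through by $\rho(c^2-q^2)$, and replacing $\rho^2q^2$ by $\psi_x^2+\psi_y^2$ wherever it appears, the terms $\psi_x^2\psi_{xx}$ and $\psi_y^2\psi_{yy}$ cancel, and after grouping the surviving terms one obtains exactly \eqref{eq:stream-eq} with $\rho^2c^2$ in the coefficients. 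I expect the only real obstacle to be organizational: the fact that $q^2$ itself contains $\rho$ forces the little linear solve for $\nabla\rho$, and the concluding simplification must be carried out carefully, but no new idea is needed beyond Bernoulli's identity.
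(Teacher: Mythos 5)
Your proof is correct, but it organizes the computation genuinely differently from the paper. Both arguments rest on the same physical input: the steady, force-free Euler equations combined with barotropy and irrotationality give the differential Bernoulli law $c^2\nabla\rho=-\tfrac{\rho}{2}\nabla q^2$, which is exactly the paper's \eqref{eq:motion2}--\eqref{eq:rho-x}. From there the routes diverge. The paper keeps $(u,v,\rho)$ as the primary variables: it writes $\psi_{xx}=(-\rho v)_x$, $\psi_{xy}=(-\rho v)_y=(\rho u)_x$, $\psi_{yy}=(\rho u)_y$, expands each term of the target equation using $\rho_x=-\tfrac{\rho}{c^2}(uu_x+vv_x)$ and its $y$-analogue, and sums to obtain the identity \eqref{eq:concl2}, namely that the left-hand side of \eqref{eq:stream-eq} equals $\rho^3(u^2+v^2-c^2)(v_x-u_y)$, which then vanishes by irrotationality. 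You instead eliminate $(u,v)$ at the outset, solve the resulting linear relation for $\nabla\rho$ in terms of $\nabla(\psi_x^2+\psi_y^2)$, and feed it into the irrotationality condition rewritten as $\rho(\psi_{xx}+\psi_{yy})=\psi_x\rho_x+\psi_y\rho_y$; after multiplying by $\rho(c^2-q^2)$ and using $\rho^2q^2=\psi_x^2+\psi_y^2$, the cancellation you describe does produce \eqref{eq:stream-eq}. Your version makes the structure of the equation more transparent (the Laplacian of $\psi$ corrected by the density gradient), while the paper's version yields the stronger intermediate identity \eqref{eq:concl2}, which holds before irrotationality is invoked at the last step. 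One small caveat about your route: the solve for $\nabla\rho$ divides by $c^2-q^2$ and so is literally valid only off the sonic set $q^2=c^2$; either substitute the undivided relation $(c^2-q^2)\nabla\rho=-\nabla(\psi_x^2+\psi_y^2)/(2\rho)$ directly, or observe that the final identity extends by continuity. The paper's term-by-term summation avoids this division altogether.
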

\begin{proof}
 By the assumptions \ref{item:ass-2}, \ref{item:ass-2} and \ref{item:ass-4},
 Euler's equation of motion
 \[
     \frac{\partial \vect{v}}{\partial t}
       +
       u\frac{\partial\vect{v}}{\partial x}+v\frac{\partial\vect{v}}{\partial y}
       +\frac{1}{\rho}\grad p=0
 \]
 is reduced to
 \begin{equation}\label{eq:motion}
     uu_x+vv_x+\frac{p_x}{\rho}=0,\qquad
     uu_y+vv_y+\frac{p_y}{\rho}=0,
 \end{equation}
 that is, 
 \begin{equation}\label{eq:motion2}
     dp +\rho q\,dq = 0
     \qquad (q = |\vect{v}|=\sqrt{u^2+v^2}).
 \end{equation}
 Here, by the barotropicity \ref{item:ass-1}, we have
 \begin{equation}\label{eq:pot-dp}
    p_x = \frac{\partial}{\partial x}p(\rho)=
          p'(\rho)\frac{\partial\rho}{\partial x}
        = c^2\rho_x,\qquad
    p_y = c^2\rho_y.
 \end{equation}
 Substituting these into the equation of motion \eqref{eq:motion}, 
 we have
 \begin{equation}\label{eq:rho-x}
    \rho_x = \frac{p_x}{c^2}=-\frac{\rho}{c^2}(uu_x+vv_x),\qquad
    \rho_y=-\frac{\rho}{c^2}(uu_y+vv_y),
 \end{equation}
 and hence 
 \begin{align*}
  (\rho v)_x &=
      \rho_x v + \rho v_x
            =-\frac{\rho}{c^2}(uu_x+vv_x)v + \rho v_x,\\
  (\rho v)_y &=
      \rho_y v + \rho v_y
            =-\frac{\rho}{c^2}(uu_y+vv_y)v + \rho v_y,\\
  (\rho u)_x &=
      \rho_x u + \rho u_x
            =-\frac{\rho}{c^2}(uu_x+vv_x)u + \rho u_x,\\
  (\rho u)_y &=
      \rho_y u + \rho u_y
            =-\frac{\rho}{c^2}(uu_y+vv_y)u + \rho u_y
 \end{align*}
 hold.  Thus, we have
 \begin{align*}
  (\rho^2c^2-\psi_y^2)\psi_{xx}&=
  (\rho^2c^2-\rho^2 u^2)(-\rho v)_x
  =-\rho^2(c^2-u^2)
    \left(
      -\frac{\rho}{c^2}(uu_x+vv_x)v + \rho v_x
    \right),\\
  \psi_x\psi_y \psi_{xy}
    &= -\rho^2 u v (-\rho v)_y
    = \rho^2 u v \left(-\frac{\rho}{c^2}(uu_y+vv_y)v + \rho v_y\right),\\
  \psi_x\psi_y \psi_{yx}
    &= -\rho^2 u v (\rho u)_x
    = -\rho^2 u v \left(-\frac{\rho}{c^2}(uu_x+vv_x)u + \rho  u_x\right),\\
  (\rho^2c^2-\psi_x^2)\psi_{yy}&=
  (\rho^2c^2-\rho^2 v^2)(\rho u)_y
  =\rho^2(c^2-v^2)
    \left(
      -\frac{\rho}{c^2}(uu_y+vv_y)u + \rho u_y
    \right).
 \end{align*}
 Summing these up, it holds that 
 \begin{equation}\label{eq:concl2}
   (\rho^2c^2-\psi_y^2) \psi_{xx}
   + 2 \psi_x\psi_y \psi_{xy}
   + (\rho^2c^2-\psi_x^2)\psi_{yy}
   = \rho^3(u^2+v^2-c^2)(v_x-u_y).
 \end{equation}
 Here, by the assumption \ref{item:ass-4}, we have $v_x=u_y$.
Then we have the conclusion.
\end{proof}%
When $\rho c=1$, 
the equation \eqref{eq:stream-eq} coincides with the zero mean curvature
equation \eqref{zm}.
In this case, \eqref{eq:sound-speed} yields that
\begin{equation}\label{eq:ppp}
   p=p_0-\frac{1}{\rho},
\end{equation}
where $p_0$ is a positive constant.
This means that
the zero mean curvature equation
induces a virtual gas.
For actual gas,
the pressure $p$ is proportional to $\rho^\gamma$, 
where $\gamma$ is a constant
($>1$, $\gamma\approx 1.4$ for air).

Differentiating \eqref{eq:ppp}, we have
that
\begin{equation}\label{eq:pp}
    dp=\frac{d\rho}{\rho^2}.
\end{equation}
Substituting \eqref{eq:pp} into \eqref{eq:motion2},
we have that
\[
     d\left(-\frac1{\rho^2}+q^2 \right)=0,
\]
that is, there exists a constant $k$
such that
\begin{equation}\label{eq:int-euler}
       -\frac1{\rho^2}+q^2 =k.
\end{equation}
Here, by \eqref{eq:stream}, 
it holds that
\begin{equation}\label{eq:stream-vel}
   (u,v)=\frac{1}{\rho}(\psi_y,-\psi_x).
\end{equation}
Thus, \eqref{eq:int-euler} is equivalent to
\[
   1-\psi_x^2-\psi_y^2=-k \rho^2.
\]
We suppose that
$1-\psi_x^2-\psi_y^2$ does not vanish identically.
Then $k\ne 0$ and
we may set $\sqrt{|k|}=1/\rho_0$, 
where $\rho_0$ is a positive constant.
Then we have
\[
    \rho=\rho_0|1-\psi_x^2-\psi_y^2|^{1/2}.
\]
Note that by \eqref{eq:stream-vel} and 
the assumption $c\rho=1$, 
the speed $|\vect{v}|=q$ is greater 
than (resp.\ less than)
the speed of sound $c$, that is, the 
flow is {\it supersonic}
(resp.\ subsonic), if and only if 
$1-\psi_x^2-\psi_y^2<0$ (resp.\ $>0$).

Suppose now that the flow changes 
from being subsonic to supersonic at a curve
\[
    \sigma(t):=\bigl(x(t),y(t)\bigr)\qquad (a\le t\le b).
\]
Without loss of generality, we may assume that $t$ is an arclength 
parameter of the curve $\sigma$.
In particular,
\[
   \rho=\rho_0|1-\psi_x^2-\psi_y^2|^{1/2}=0
\]
holds on the curve $\sigma$.
Since the local speed of sound
is given by
\[
   c=\rho^{-1}
         =\rho_0^{-1}|1-\psi_x^2-\psi_y^2|^{-1/2},
\]
the curve $\sigma$ is a singularity of the flow,
although the stream function itself is real analytic near $\sigma$.
Moreover, we suppose that each point of the curve $\sigma(t)$
is a non-degenerate point of type change with respect to $\psi$.
Then, as seen in the proof of Proposition~\ref{prop:Kl2},
we can take the parameter $t$ of the curve $\sigma$
such that $t\mapsto (t,\sigma(t))$ gives a non-degenerate
null curve in $\R^3_1$. Moreover,
it holds that
\[
   x'(t)=\psi_x\bigl(x(t),y(t)\bigr),
    \qquad y'(t)=\psi_y\bigl(x(t),y(t)\bigr)
\]
and $x'(t)^2+y'(t)^2=1$.
Since
$\nabla B\ne 0$
at a non-degenerate point of type change,
the proof of Proposition \ref{eq:id} 
yields that $\sigma''(t)\ne 0$. 
Since $x'(t)^2+y'(t)^2=1$,
this implies that $\sigma(t)$ is a locally convex curve.
Consequently, we get the following assertion:
\begin{theorem}\label{cor:st}
 Let $\sigma(t):=\bigl(x(t),y(t)\bigr)$ be a 
locally convex curve on the $xy$-plane 
 with an arc-length parameter $t$.
 Then the graph $t=\psi(x,y)$ of the zero mean curvature surface
$\hat\phi_{\tilde \sigma}$ as in Theorem~\ref{thm:main2}
 associated to the null curve $\tilde\sigma:=\bigl(t,x(t),y(t)\bigr)$
 gives a real analytic stream function
 satisfying 
\eqref{eq:stream-eq} with \eqref{eq:ppp}
 {\rm(}i.e. $c\rho=1${\rm)} 
 which changes from being
subsonic to supersonic at the curve
 $\sigma$. 
 Moreover,
 \[
    (u,v):=\frac1{\rho}(\psi_y,-\psi_x) 
    \]
gives the velocity vector of the flow such that
 \begin{enumerate}
  \item $u^2+v^2$ diverges to $\infty$ 
   as $(x,y)$ approaches the image of $\sigma$. 
  \item The flow changes from being subsonic to 
  being supersonic across $\sigma$.
\item The acceleration vector $\sigma''(t)$ points 
to the supersonic region.
 \end{enumerate}
\end{theorem}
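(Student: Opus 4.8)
The plan is to derive the statement by combining Theorem~\ref{thm:main2} with the Bernoulli-type computation carried out in the paragraphs immediately above. First I would check that $\tilde\sigma(t)=\bigl(t,x(t),y(t)\bigr)$ is a non-degenerate real analytic null curve. Since $t$ is an arclength parameter, $x'(t)^2+y'(t)^2=1$, so $\inner{\tilde\sigma'}{\tilde\sigma'}=-1+x'^2+y'^2=0$ and $\tilde\sigma$ is a regular null curve; differentiating $x'^2+y'^2=1$ gives $x'x''+y'y''=0$, so the planar acceleration $\sigma''=(x'',y'')$ is orthogonal to $\sigma'=(x',y')$, and it is nonzero precisely because $\sigma$ is locally convex, its signed curvature $\kappa=x'y''-y'x''$ never vanishing. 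Hence $\tilde\sigma''=(0,x'',y'')$ is never proportional to $\tilde\sigma'=(1,x',y')$, and Theorem~\ref{thm:main2} applies to $\gamma=\tilde\sigma$, producing a real analytic zero mean curvature immersion $\hat\phi_{\tilde\sigma}$ such that every point of the image of $\tilde\sigma$ is a non-degenerate point of type change with respect to it.

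Next I would verify that near $\sigma$ the surface $\hat\phi_{\tilde\sigma}$ is the graph of a real analytic function $t=\psi(x,y)$. Expanding the formula of Theorem~\ref{thm:main2} as $\hat\phi_{\tilde\sigma}(u,v)=\sum_{m\ge0}\frac{(-1)^m}{(2m)!}\,\tilde\sigma^{(2m)}(u)\,v^m$ shows that the time component of $\hat\phi_{\tilde\sigma}$ is identically $u$, so the surface is a graph over the $xy$-plane wherever its $xy$-projection $P$ is a local diffeomorphism; and since $P(u,0)=\sigma(u)$ with $P_v(u,0)=-\frac12\sigma''(u)$ orthogonal to $P_u(u,0)=\sigma'(u)$, the Jacobian of $P$ along $\{v=0\}$ equals $-\frac12\kappa(u)\ne0$. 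Thus $\psi$ solves \eqref{zm} and satisfies $t=\psi\bigl(x(t),y(t)\bigr)$, the image of $\sigma$ being a curve of non-degenerate points of type change. Writing $B:=1-\psi_x^2-\psi_y^2$ and imposing $\rho c=1$ (equivalently the pressure law \eqref{eq:ppp}), the stream equation \eqref{eq:stream-eq} reduces to \eqref{zm}; reading the Bernoulli integral $-\rho^{-2}+q^2=k$ of the preceding discussion in reverse then exhibits $\psi$ as the stream function of a barotropic, irrotational, steady flow with $\rho=\rho_0|B|^{1/2}$ and $(u,v)=\rho^{-1}(\psi_y,-\psi_x)$ for a suitable constant $\rho_0$.

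It then remains to read off (1)--(3). Since every point of $\sigma$ is a non-degenerate point of type change, $B=0$ and $\nabla B\ne0$ there, so $B$ is a local defining function for $\sigma$ and changes sign across it; as the flow is supersonic exactly where $B<0$ and subsonic where $B>0$, this gives (2). For (1), along $\sigma$ one has $\psi_x^2+\psi_y^2=1$ while $\rho=\rho_0|B|^{1/2}\to0$, hence $u^2+v^2=\rho^{-2}(\psi_x^2+\psi_y^2)\to\infty$ as $(x,y)\to\sigma$ (equivalently $q^2=k+\rho^{-2}\to\infty$). For (3), the argument proving Proposition~\ref{prop:Kl1} gives $x'=\psi_x$ and $y'=\psi_y$ along $\sigma$, so the identity \eqref{eq:id} yields $\sigma''(t)=(x'',y'')=-\frac12\nabla B$; since $-\nabla B$ points in the direction of decreasing $B$, i.e.\ into the region $\{B<0\}$, the acceleration vector points to the supersonic region.

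The points requiring the most care are in the second paragraph: confirming that Theorem~\ref{thm:main2} really yields an honest graph over the $xy$-plane up to and including $\sigma$ (where the hypothesis $\kappa\ne0$ re-enters), and checking that the reconstructed $(\rho,u,v,p)$ genuinely satisfies the equation of continuity, irrotationality and Euler's equation of motion rather than merely the stream equation, which is a routine computation from \eqref{zm}. Everything else is a direct application of results and formulas already established above.
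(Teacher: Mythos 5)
Your proposal is correct, and its overall architecture (lift $\sigma$ to the non-degenerate null curve $\tilde\sigma$, apply Theorem~\ref{thm:main2}, and read off (1) and (2) from the Bernoulli relation $\rho=\rho_0|1-\psi_x^2-\psi_y^2|^{1/2}$ established in the preceding discussion) is the same as the paper's. The genuine divergence is in item (3): the paper argues geometrically, observing that the time-like (supersonic) sheet produced by Theorem~\ref{thm:main2} projects to the $xy$-plane as the set of midpoints $\tfrac12\bigl(\sigma(t+s)+\sigma(t-s)\bigr)\approx\sigma(t)+\tfrac12\sigma''(t)s^2$, so the supersonic region is swept out on the side of $\sigma''$; you instead invoke the identity \eqref{eq:id} from Proposition~\ref{prop:Kl1}, $\sigma''=-\tfrac12\nabla B$, and note that $-\nabla B$ points into $\{B<0\}$, the supersonic side. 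Both arguments are valid and about equally short; the paper's is more self-contained at that spot (it needs only the parametrization of Theorem~\ref{thm:main2}), while yours makes the mechanism more transparent by tying the convexity of $\sigma$ directly to the sign of $B$. You also supply two verifications the paper leaves implicit: that $\hat\phi_{\tilde\sigma}$ really is a graph $t=\psi(x,y)$ near $\sigma$ (via the Jacobian $-\tfrac12\kappa\ne0$ of the $xy$-projection, which is where local convexity enters a second time), and that a full flow $(\rho,u,v,p)$ can be reconstructed from $\psi$; these are welcome additions rather than deviations.
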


\begin{proof}
Since $\sigma$ is locally convex,
its lift $\tilde\sigma$ is a non-degenerate 
null curve in $\R^3_1$.
Then we can apply Theorem \ref{thm:main2}
for the curve $\tilde \sigma$, and get a graph 
$\psi(x,y)$ of a zero mean curvature surface
which changes type at $\tilde \sigma$.
Then $\psi$ can be considered as a stream function 
which changes from being subsonic to supersonic 
at the curve $\sigma$. 
We set
$$
P:=\frac{\sigma(t+s)+\sigma(t-s)}2
\approx \sigma(t)+\frac{\sigma''(t)}2s^2,
$$
which is the midpoint of the two points 
$\sigma(t+s),\sigma(t-s)$
of the curve $\sigma$.
By Theorem \ref{thm:main2},
the flow is supersonic at the point $P$, 
which proves the last assertion.
\end{proof}

\begin{acknowledgement}
The seventh author thanks Osamu Kobayashi  
 for a fruitful discussion at Osaka University,
who
suggested that our subject might be applied to
the theory of 2-dimensional compressible
gas flow.
\end{acknowledgement}

\end{document}